%

\documentclass[11 pt]{amsart}

\usepackage{graphicx, amsmath, fullpage, amssymb, amsthm, amsfonts, color}
\usepackage{enumerate}
\newtheorem{theorem}{Theorem}[section]
\newtheorem{lemma}[theorem]{Lemma}

\theoremstyle{definition}
\newtheorem{definition}[theorem]{Definition}
\newtheorem{example}[theorem]{Example}

\newtheorem{rmk}[theorem]{Remark}
\newtheorem{cor}[theorem]{Corollary}
\numberwithin{equation}{section}
\usepackage[colorlinks,
            linkcolor=blue,
            anchorcolor=blue,
            citecolor=blue
            ]{hyperref}
\pagestyle{plain}


\begin{document}

\title{A Graphon Approach to Limiting Spectral Distributions  of Wigner-type Matrices}

\author{Yizhe Zhu}
\address{Department of Mathematics, University of Washington, Seattle, WA 98195, USA}
\curraddr{Department of Mathematics, University of California, San Diego, La Jolla, CA 92093, USA}

\email{yiz084@ucsd.edu}
\thanks{Y.Z. is partially supported by NSF DMS-1712630.}
%
\subjclass[2000]{Primary 05C80, 15A52; Secondary 60C05, 90B15}
\date{\today}
\keywords{graphon; homomorphism density; spectral distribution; inhomogeneous random graph; Wigner-type matrix}

\begin{abstract}
We present a new approach, based on graphon theory, to finding the limiting spectral distributions of general Wigner-type matrices.  This approach determines the moments of the limiting measures and the equations of their Stieltjes transforms explicitly with weaker assumptions on the convergence of variance profiles than previous results.  As applications, we give a new proof of the semicircle law for generalized Wigner matrices and determine the limiting spectral distributions for three sparse inhomogeneous random graph models with sparsity $\omega(1/n)$: inhomogeneous random graphs with roughly equal expected degrees, $W$-random graphs and stochastic block models with  a growing number of blocks. Furthermore, we show  our theorems can be applied to random Gram matrices with a variance profile for which we can find the limiting spectral distributions under weaker assumptions than previous results.
\end{abstract}

\maketitle

%

\section{Introduction} 

\subsection{Eigenvalue Statistics of Random Matrices}

Random matrix theory is a central topic in probability and statistical  physics with many connections to various areas such as combinatorics, numerical analysis, statistics, and theoretical computer science. One of the primary goals of random matrix theory is to study the limiting laws for eigenvalues of $(n\times n)$ Hermitian random matrices as $n\to\infty$. 

Classically, a Wigner matrix is a Hermitian random matrix whose entries are i.i.d. random variables up to the symmetry constraint, and have zero expectation and  variance 1. 
As has been known since Wigner's seminal paper \cite{wigner1955characteristic} in various formats,  for Wigner matrices, the empirical spectral distribution converges almost surely to the semicircle law. The i.i.d. requirement and the constant variance condition are not essential for proving the semicircle law, as can be seen from the fact that generalized Wigner matrices, whose entries have different variances but each column of the variance profile is stochastic, turned out to obey the semicircle law \cite{anderson2006clt,erdHos2012bulk,girko1994necessary}, under various conditions as well. Beyond the semicircle law, the Wigner matrices exhibit universality \cite{erdHos2010bulk,tao2011random}, a phenomenon that has been recently shown to hold for other models, including generalized Wigner matrices   \cite{erdHos2012bulk},  adjacency matrices of Erd\H{o}s-R\'{e}nyi random graphs  \cite{erdHos2012spectral,erdHos2013spectral,tran2013sparse,huang2015bulk}  and general Wigner-type matrices \cite{ajanki2015universality}.

A slightly different direction of research is to investigate structured random matrix models whose limiting spectral distribution is not the semicircle law. One such example is random block matrices, whose  limiting spectral distribution has been found in \cite{shlyakhtenko1996random,far2008slow} using free probability. Ding \cite{ding2014spectral} used moment methods to derive the limiting spectral distribution of random block matrices  for a fixed number of blocks (a claim in
\cite{ding2014spectral} that the method extends to the growing number of blocks case is
unfortunately incorrect). Recently Alt et al. \cite{alt2017location} provided a unified way to study the global law for a general class of non-Hermitian random block matrices including Wigner-type matrices.

\subsection{Graphons and Convergence of Graph Sequences}
Understanding large networks is a fundamental problem in modern graph theory and to properly define a limit object, an important issue is to have  good definitions of convergence for graph sequences. Graphons, introduced in 2006 by Lov{\'a}sz and Szegedy \cite{lovasz2006limits} as limits of dense graph sequences, aim to provide a  solution to this question. Roughly speaking, the set of finite graphs endowed with the cut metric (See Definition \ref{cutmetric}) gives rise to a metric space, and the completion of this space is the space of graphons. These objects may be realized as symmetric, Lebesgue measurable functions from $[0, 1]^2$ to $\mathbb R$. They also characterize the convergence of graph sequences based on graph homomorphism densities
\cite{borgs2008convergent, borgs2012convergent}. Recently, graphon theory has been generalized for sparse graph sequences  \cite{borgs2014p,borgs2018,frenkel2018convergence,kunszenti2019measures}. 

The most relevant results for our endeavor are the connections between two types of convergences: left convergence in the sense of homomorphism densities and convergence in cut metric. 
In our approach, for the general Wigner-type matrices, we will regard the variance profile  matrices $S_n$ as a graphon sequence. The convergence of empirical spectral distributions is connected to the convergence of this graphon sequence associated with $S_n$ in either left convergence sense or in cut metric.

  \subsection{Random Graph Models}
 One of the most basic models for random graphs is the Erd\H{o}s-R\'{e}nyi random graph. The scaled adjacency matrix $\frac{A_n}{\sqrt {np}}$ of Erd\H{o}s-R\'{e}nyi random graph $\mathcal{G}(n,p)$ has the semicircle law as limiting spectral distribution \cite{ding2010spectral,tran2013sparse} when $np\to\infty$. 
  
  Random graphs generated from an inhomogeneous Erd\H{o}s-R\'{e}nyi model $\mathcal G(n, (p_{ij}))$, where edges exist independently with given probabilities $p_{ij}$ is a generalization of the classical Erd\H{o}s-R\'{e}nyi model $\mathcal{G}(n,p)$. Recently, there are some results on the largest eigenvalue \cite{benaych2017largest,benaych2017spectral} and the spectrum of the Laplacian matrices \cite{chakrabarty2018spectra} of  inhomogeneous Erd\H{o}s-R\'{e}nyi model random graphs.  Many popular graph models arise as special cases of $\mathcal G(n, (p_{ij}))$ such as random graphs with given expected degrees \cite{chung2003spectra}, stochastic block models \cite{holland1983stochastic}, and $W$-random graphs \cite{lovasz2006limits,borgs2014p}. 
  
  The stochastic block model (SBM) is a random graph model with planted clusters. It is widely used as a canonical model to study clustering and community detection in network and data sciences \cite{abbe2018community}. Here one assumes that a random  graph was generated by first partitioning vertices  into unknown $d$ groups, and then connecting two vertices with a probability that depends on their assigned groups. Specifically, suppose we have a partition of $[n] = V_1 \cup V_2
\cup \ldots \cup V_d$ for some integer $d$, and that $|V_i|=n_i$ for $i = 1, \ldots, d$. Suppose that for any pair
$(k,l) \in [d] \times [d]$  there is a $p_{kl} \in [0,1]$ such that for
any $i \in V_k$, $j \in V_l$, 
\[
a_{ij} = \left \{ \begin{array}{ll} 1, 
    & \mbox{with probability } p_{kl}, \\
0, & \mbox{otherwise}. \end{array} \right . 
\]
Also, if $k=l$, there is a $p_{kk}$ such that $a_{ii}=0$ for $i\in V_k$ and for any $i\not= j, i,j \in V_k$, 
\[
a_{ij} = \left \{ \begin{array}{ll}  
                    1, & \mbox{with probability $p_{kk}$,} \\
                    0, & \mbox{otherwise}. \end{array} \right .
\]
The task for community detection is to find the unknown partition of a random graph sampled from the SBM. In this paper, we will consider the limiting spectral distribution of the adjacency matrix of an SBM. Since permuting the adjacency matrix does not change its spectrum, we may assume its adjacency matrix has a block structure by a proper permutation.

 As the number of vertices grows, the network might not be well described by a stochastic block model with a fixed number of blocks. Instead, we might consider the case where the number of blocks grows as well \cite{choi2012stochastic} (see Section \ref{sSBM}). A different model that generates  nonparametric random graphs is called $W$-random graphs and is achieved by sampling points uniformly from a graphon $W$. We will define a sparse version of $W$-random graphs in Section \ref{wrandomgraph} for which one can obtain a limiting spectral distribution when the sparsity $\rho_n=\omega(1/n)$. 
 
 For  inhomogeneous random graphs with bounded expected degree introduced by Bollob\'{a}s, Janson and Riordan \cite{bollobas2007phase}, their graphon limits will be $0$ and our main result will not cover this regime. This is because the graphon limit is only suitable for  graph sequences with unbounded degrees. Instead, the spectrum of random graphs  with bounded expected degrees was studied in \cite{bordenave2010resolvent} by local weak convergence \cite{benjamini2001recurrence,aldous2004objective}, a graph limit theory for graph sequences with bounded degrees.

 \subsection{Random Gram Matrices} Let $X$ be a $m\times n$ random matrix with independent, centered entries with unit variance, where $\frac{m}{n}$ converges to some positive constant as $n\to\infty$. It is known that the empirical spectral distribution converges to the Mar{\v{c}}enko-Pastur law \cite{marvcenko1967distribution}. However, some applications in wireless communication require understanding the spectrum of $\frac{1}{n}XX^*$ where $X$ has a variance profile \cite{hachem2008clt,couillet2011random}. Such matrices are called \textit{random Gram matrices}. The limiting spectral distribution of a random Gram matrix  with non-centered diagonal entries and a variance profile was obtained in \cite{hachem2006empirical} under the assumptions that the $(4+\varepsilon)$-th moments of entries in $X$ are bounded and the variance profile comes from a continuous function. The local law and singularities of the density of states of random Gram matrices were analyzed in \cite{alt2017local,alt2017}.
 
 We use the symmetrization trick to connect the eigenvalues of $\frac{1}{n}X X^*$ to eigenvalues of a Hermitian matrix $H:=\begin{bmatrix}
	0 & X\\
	X^* &0
\end{bmatrix}$. As a corollary from our main theorem in Section \ref{wignertype}, when $\mathbb E X=0$, we obtain the moments and Stieltjes transforms of the limiting spectral distributions under weaker assumptions than \cite{hachem2006empirical}. In particular, we only need entries in $X$ to have finite second moments, and the variance profile of $H_n$  converges in terms of homomorphism densities.

 \subsection{Contributions of this Paper}
 We obtained a  formula to compute the moments of limiting spectral distributions of general Wigner-type matrices from graph homomorphism densities, and we derived  quadratic vector equations as in \cite{ajanki2015quadratic}  from this formula.
 
  Previous approaches to the problem require the variance profiles to converge to a function whose set of discontinuities has measure zero \cite{shlyakhtenko1996random,anderson2006clt,hachem2006empirical}, we make no such requirement here.
 The method in  \cite{shlyakhtenko1996random} is based on free probability theory,  and it is assumed  that all entries of the matrix are Gaussian, while our Theorem \ref{main} and Theorem \ref{main2} work for non-i.i.d. entries with general distributions. Especially, we cover a variety of sparse matrix models (see Section \ref{generalized}-\ref{sSBM}).  The argument in \cite{anderson2006clt} is based on a sophisticated moment method for band matrix models, and our moment method proof based on graphon theory is much simpler and can be applied to many different models including random Gram matrices.  For random Gram matrices, in \cite{hachem2006empirical}, it is assumed that all entries have $(4+\varepsilon)$ moments and the variance profile is continuous. The continuity assumption is used to show the Stieltjes transform of the empirical measure converges to the Stieltjes transform of the limiting measure. We remove the technical higher moments and the continuity assumptions since our combinatorial approach requires less regularity.

 All the three previous results above assume the limiting variance profile exists and is continuous. This assumption is used to have an error control under $L^{\infty}$-norm  between the $n$-step variance profile and the limiting variance profile, which will guarantee that either the moments of the empirical measure converge or the  Stieltjes transform the empirical measure converges.  
 However, this $L^{\infty}$-convergence is  only a stronger sufficient condition compared to our condition in Theorem \ref{main} and Theorem \ref{main2}.  The key observation in our approach  is that permuting a random matrix does not change its spectrum, but the continuity of the variance is destroyed. The cut metric  in the graphon theory is a suitable tool to exploit the permutation invariant property of the spectrum (see Theorem \ref{main2}).

 Moreover, we realize that to make the moments of the empirical measure converge, we don't need to assume the moments of the limiting measure is an integral in terms of the limiting variance profile. All we need is the convergence of homomorphism density from trees. We show two examples in Section \ref{generalized} where we don't have a limiting variance profile but the moments of the empirical measure still converge: generalized Wigner matrices and inhomogeneous random graphs with roughly equal expected degrees. 

  Besides, if the limiting distribution is not the semicircle law,  previous results only  implicitly characterize the Stieltjes transform of the limiting measure by the quadratic vector equations (see \eqref{QVE1}, \eqref{QVE2}), which are not easy to solve. Our combinatorial approach explicitly determines  the moments of the limiting distributions in terms of sums of graphon integrals. Our convergence condition (see Theorem \ref{main} (1)) is the weakest so far for the existence of limiting spectral distributions and covers a variety of models like  generalized Wigner matrices, adjacency matrices of sparse stochastic block models with a growing number of blocks, and random Gram matrices.

 The organization of this paper is as follows: In Section \ref{pre}, we introduce  definitions and facts that will be used  in our proofs.  In Section \ref{wignertype}, we  state and prove the main theorems for general Wigner-type matrices  and then specialize our results to different models in Section \ref{generalized}-\ref{sSBM}. In Section \ref{gram}, we extend our results to random Gram matrices with a variance profile.
 \section{Preliminary}\label{pre}
 
\subsection{Random Matrix Theory}
We recall some basic definitions in random matrix theory.
For any $n\times n$ Hermitian matrix $A$ with eigenvalues $\lambda_1,\dots,\lambda_n$, the \textit{empirical spectral distribution} (ESD) of $A$ is defined by
\begin{align*}
F^A(x):=\frac{1}{n}\sum_{i=1}^n\mathbf{1}_{\{\lambda_i\leq x\}}.	
\end{align*}
Our main task in this paper is to investigate the convergence of the sequence of empirical spectral distribution $\{F^{A_n}\}$ to the limiting spectral distribution for a given sequence of structured random matrices. A useful tool to study the convergence of measure is the Stieltjes transform. 

Let $\mu$ be a probability measure on $\mathbb R$. The \textit{Stieltjes transform} of $\mu$ is a function $s(z)$ defined	on the upper half plane $\mathbb C^+$ by the formula:
\begin{align*}
s(z)=\int_{\mathbb R}\frac{1}{z-x}d\mu(x), \quad z\in\mathbb C^+.	
\end{align*}
Suppose that $\mu$ is compactly supported, and denote $  r:=\sup\{|t| \mid t\in \text{supp}(\mu)\}.$ We then have a power series expansion
\begin{align}\label{pse}
s(z)=\sum_{k=0}^{\infty}\frac{\beta_k}{z^{k+1}}, \quad |z|\geq r,	
\end{align}
where $\beta_k:=\int_{\mathbb R} x^k d\mu(x)$ is the $k$-th moment of $\mu$ for $k\geq 0$.

We recall some combinatorial objects related to random matrix theory.
\begin{definition}
The \textit{rooted planar tree} is a planar graph with no cycles, with one distinguished vertex as a root, and with a choice of ordering at each vertex. The ordering defines a way to explore the tree starting at the root.
	\textit{Depth-first search} 
is an algorithm for traversing rooted planar trees. One starts at the root  and explores as far as possible along each branch before backtracking. An enumeration of the vertices of a tree is said to have \textit{depth-first search order} if it is the output of the depth-first search. 
\end{definition}
 The Dyck paths of  length $2k$ are bijective to rooted planar trees of $k+1$ vertices by the depth-first search (see Lemma 2.1.6 in \cite{anderson2010introduction}).  Hence the number of rooted planar trees with $k+1$ vertices is the $k$-th Catalan number $   C_k:=\frac{1}{k+1}{{2k}\choose{k}}.$
\subsection{Graphon Theory}
We introduce definitions from graphon theory. For more details, see \cite{lovasz2012large}.
\begin{definition}
A \textit{graphon} is a symmetric, integrable function $W:[0,1]^2\to \mathbb R$.	
\end{definition}
Here symmetric means $W(x,y)=W(y,x)$ for all $x,y\in[0,1]$.  Every weighted graph $G$ has an associated graphon $W^G$ constructed as follows. First divide the interval $[0,1]$ into intervals $I_1,\dots, I_{|V(G)|}$ of length $\frac{1}{|V(G)|}$, then give the edge weight  $\beta_{ij}$ on $I_i\times I_j$, for all $i,j\in V(G)$. In this way, every finite weighted graph gives rise to a graphon ({see Figure \ref{graphon}}).
\begin{figure}
\includegraphics[width=8 cm]{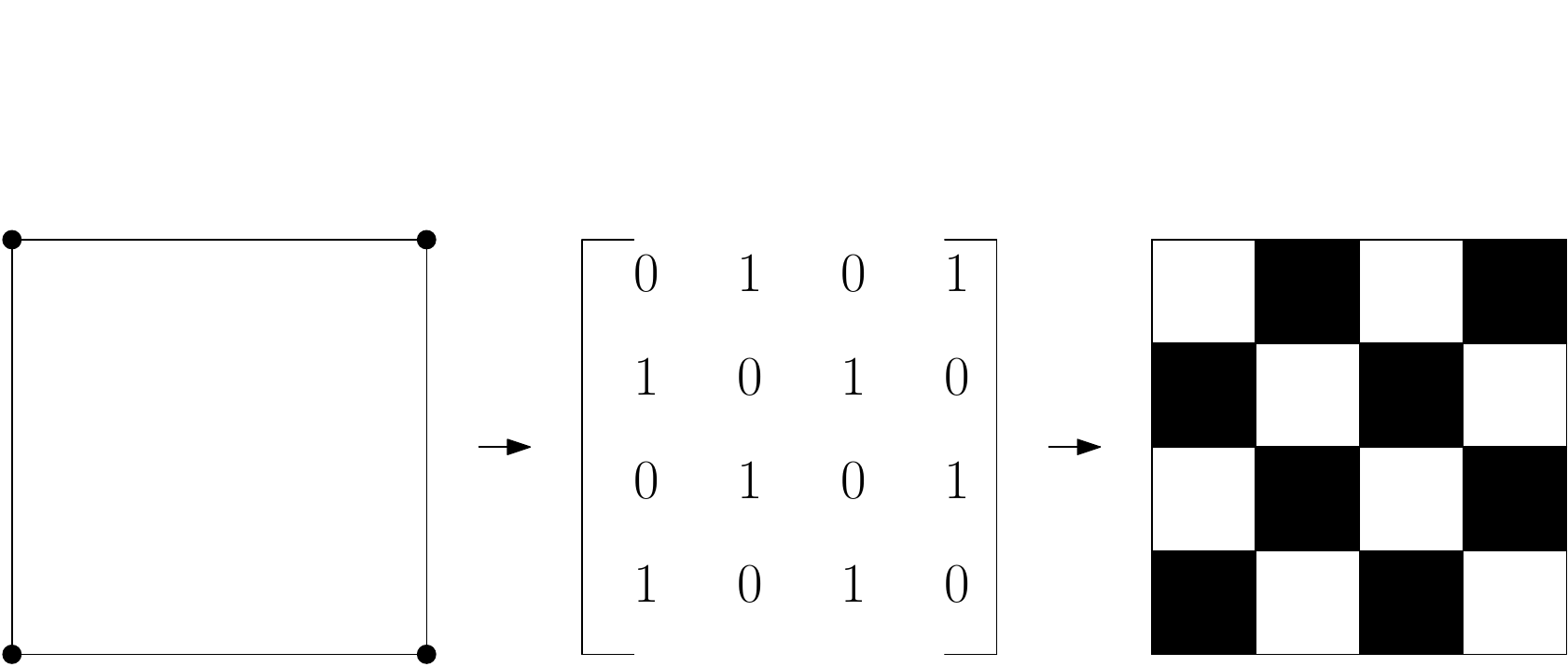}
\caption{Graphon representation of a graph}
\label{graphon}
\end{figure}

The most important metric on the space of graphons is the cut metric. The space that contains all graphons taking values in $[0,1]$ endowed with the cut metric is a compact metric space. 

\begin{definition}\label{cutmetric}
For a graphon $W:[0,1]^2\to\mathbb R$, the \textit{cut norm} is defined by
\begin{align*}
	\|W\|_{\Box}:=\sup_{S,T\subseteq [0,1]}\left|\int_{S\times T} W(x,y)dx dy\right|,
\end{align*}
	where $S$, $T$ range over all measurable subsets of $[0,1]$. Given two graphons $W, W': [0,1]^2\to\mathbb R$, define 
$
	d_{\Box}(W,W'):=\|W-W'\|_{\Box}
$
and the \textit{cut metric} $\delta_{\Box}$ is defined by
\begin{align}
\delta_{\Box}(W,W'):=\inf_{\sigma} d_{\Box}(W^{\sigma},W'),	 \notag
\end{align}
where $\sigma$ ranges over all measure-preserving bijections $[0,1]\to [0,1]$ and 
$W^{\sigma}(x,y):=W(\sigma(x),\sigma(y))$.

\end{definition}

Using the cut metric, we can compare two graphs with different sizes and  measure their similarity, which defines a type of convergence of graph sequences whose limiting object is the graphon we introduced. Another way of defining the convergence of graphs is to consider graph homomorphisms.
  
 \begin{definition}
 	For any graphon $W$ and multigraph $F=(V,E)$ (without loops), define the \textit{homomorphism density} from $F$ to $W$ as 
\begin{align}
t(F,W):=\int_{[0,1]^{|V|}} \prod_{ij\in E}W(x_i,x_j) \prod_{i\in V}dx_i.	\notag
\end{align}
 \end{definition} 

One may define \textit{homomorphism density from partially labeled graphs to graphons}, as follows.
\begin{definition}
	Let $F=(V,E)$ be a $k$-labeled multigraph. Let $V_0=V\setminus [k]$ be the set of unlabeled vertices. For any graphon $W$, and $x_1,\dots, x_k\in [0,1]$, define
\begin{align}
t_{x_1,\dots, x_k}(F,W):=\int_{x\in [0,1]^{|V_0|}}\prod_{ij\in E}W(x_i,x_j) \prod_{i\in V_0}dx_i.
\end{align}
This is a function of $x_1,\dots,x_k$.
\end{definition} 

It is natural to think  two graphons $W$ and $W'$ are similar if they have similar homomorphism densities from any finite graph $G$. This leads to the following definition of left convergence.

\begin{definition}
	Let $W_n$ be a sequence of graphons. We say  $W_n$ is \textit{convergent from the left} if $t(F,W_n)$ converges for any finite simple  (no loops, no multi-edges, no directions) graph $F$. 
\end{definition}
The importance of homomorphism densities is that they characterize convergence under the cut metric.
Let $\mathcal W_0$ be the set of all graphons such that $0\leq W\leq 1$.
The following is a characterization of convergence in the space $\mathcal W_0$, known as Theorem 11.5 in \cite{lovasz2012large}.

\begin{theorem} \label{graphonconvergence}
Let $\{W_n\}$ be a sequence of graphons in $\mathcal W_0$ and let $W\in \mathcal W_0$.	Then  $t(F,W_n)\to t(F,W)$ for all finite simple graphs if and only if $\delta_{\Box}(W_n,W)\to 0$.
\end{theorem}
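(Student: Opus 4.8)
The plan is to prove the two implications separately. The forward direction --- that $\delta_\Box(W_n,W)\to 0$ implies $t(F,W_n)\to t(F,W)$ for every finite simple graph $F$ --- is the soft ``counting lemma'' direction and should be entirely self-contained. The reverse direction --- that convergence of all homomorphism densities implies $\delta_\Box$-convergence --- is where the real content lies, and I would deduce it by a compactness argument resting on two external inputs: the compactness of $\mathcal W_0$ in the cut metric, and the uniqueness fact that two graphons with identical homomorphism densities are at cut distance zero.

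For the forward direction I would establish the quantitative bound $\abs{t(F,U)-t(F,W)}\le \abs{E(F)}\,\delta_\Box(U,W)$ for every finite simple graph $F$. Since $t(F,\cdot)$ is invariant under measure-preserving bijections of $[0,1]$, it is enough to bound $\abs{t(F,U)-t(F,W)}$ by $\abs{E(F)}\,d_\Box(U,W)$, and then take the infimum over such bijections. Writing $E(F)=\{e_1,\dots,e_m\}$, I would telescope, replacing the factor $W(x_{e_j})$ by $U(x_{e_j})$ one edge at a time; the $i$-th difference is $\int (U-W)(x_{e_i})\prod_{j<i}U(x_{e_j})\prod_{j>i}W(x_{e_j})\prod_v dx_v$. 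Fixing every coordinate except the two endpoints $a,b$ of $e_i$, and using crucially that $F$ is \emph{simple} so that no other edge joins $a$ to $b$, the remaining product becomes $c\,p(x_a)q(x_b)$ with $c,p,q$ valued in $[0,1]$; the inner integral is then at most $d_\Box(U,W)$, because for a fixed kernel the bilinear form $(p,q)\mapsto\int(U-W)\,p(x_a)q(x_b)$ attains its extreme values over $[0,1]$-valued test functions at $\{0,1\}$-valued ones, i.e.\ at indicators of measurable sets. Integrating out the remaining coordinates and summing over $i$ gives the bound, and hence the implication.

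For the reverse direction I would argue by contradiction. Suppose $t(F,W_n)\to t(F,W)$ for all finite simple $F$ but $\delta_\Box(W_n,W)\not\to 0$; pass to a subsequence with $\delta_\Box(W_{n_k},W)\ge\varepsilon$ for some $\varepsilon>0$. By compactness of $\mathcal W_0$ under $\delta_\Box$ (the Lov\'asz--Szegedy theorem, obtained from the weak/Frieze--Kannan regularity lemma together with a diagonal extraction), a further subsequence converges in $\delta_\Box$ to some $U\in\mathcal W_0$. Applying the forward direction along this subsequence gives $t(F,W_{n_k})\to t(F,U)$, so $t(F,U)=t(F,W)$ for every finite simple $F$. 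Invoking the uniqueness statement, $\delta_\Box(U,W)=0$, whence $\delta_\Box(W_{n_k},W)\to\delta_\Box(U,W)=0$ by the triangle inequality, contradicting $\delta_\Box(W_{n_k},W)\ge\varepsilon$.

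The main obstacle is thus the pair of inputs used in the last paragraph. Compactness of $(\mathcal W_0,\delta_\Box)$ is genuinely the hard analytic fact and is powered by the weak regularity lemma. For uniqueness I would use the sampling picture: the homomorphism densities $t(F',W)$ over graphs $F'$ on $k$ vertices determine, via inclusion--exclusion, the law of the random graph $\mathbb G(k,W)$ obtained by sampling $k$ i.i.d.\ uniform points and inserting edges independently with probabilities $W(X_i,X_j)$; and the first sampling lemma gives $\mathbb E\,\delta_\Box(W,\mathbb G(k,W))\to 0$ as $k\to\infty$. Hence equal homomorphism densities force $\mathbb G(k,U)$ and $\mathbb G(k,W)$ to have the same law for every $k$, and letting $k\to\infty$ yields $\delta_\Box(U,W)=0$. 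Everything else is the routine counting and subsequence bookkeeping sketched above.
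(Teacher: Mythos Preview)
Your proposal is mathematically sound and follows the standard route to this equivalence: the counting lemma for the forward implication, and compactness of $(\mathcal W_0,\delta_\Box)$ together with the uniqueness theorem for the reverse implication. However, there is nothing to compare against here: the paper does not prove this statement at all. It is quoted verbatim as a known result, with the attribution ``known as Theorem 11.5 in \cite{lovasz2012large}'', and is used as a black box in the proof of Theorem~\ref{main}(2). Your sketch is in fact an accurate summary of the proof one finds in the cited reference, so there is no discrepancy in approach---just a difference between citing and reproving.
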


\section{Main Results for General Wigner-type Matrices}\label{wignertype}
\subsection{Set-up and Main Results}

Let $A_n$ be a Hermitian random matrix whose entries above and on the diagonal of $A_n$ are independent. Assume a \textit{general Wigner-type matrix} $A_n$ with a variance profile matrix $S_n$ satisfies the following conditions:
\begin{enumerate}
\item $\mathbb Ea_{ij}=0,\mathbb E|a_{ij}|^2=s_{ij}$. 
\item (Lindeberg's condition) for any constant $\eta>0$,\label{eta}
\begin{align}\lim_{n\to\infty} \frac{1}{n^2}\sum_{1\leq,i,j\leq n}\mathbb E[	|a_{ij}|^2\mathbf{1}(|a_{ij}|\geq \eta\sqrt n)]=0.\label{lind}	
\end{align}
\item $  \sup_{ij}s_{ij}\leq C $ for some  constant $C\geq 0$.
\end{enumerate}
\begin{rmk}
If we assume entries of $A_n$ are of the form $a_{ij}=s_{ij}\xi_{ij}$ where the $\xi_{ij}$'s have mean 0, variance 1 and are i.i.d. up to symmetry, then the Lindeberg's condition \eqref{lind} holds by the Dominated Convergence Theorem.
\end{rmk}

To begin with, we associate a graphon $W_n$ to the matrix $S_n$ in the following way. Consider $S_n$ as the adjacency matrix of a weighted graph $G_n$ on $[n]$ such that the weight of the edge $(i,j)$ is $s_{ij}$, then $W_n$ is defined as the corresponding graphon to $G_n$. We say $W_n$ is a \textit{graphon representation} of $S_n$. We define $M_n:=\frac{1}{\sqrt n}A_n$ and denote all rooted planar tree with $k+1$ vertices as $T_{j}^{k+1}, 1\leq j\leq C_k$. Now we are ready to state our main results for the limiting spectral distributions of general Wigner-type matrices.

\begin{theorem} Let $A_n$ be a general Wigner-type matrix   and $W_n$ be the corresponding graphon of $S_n$. The following holds:\label{main}
\begin{enumerate}
\item If for any finite tree $T$, $t(T,W_n)$ converges as $n\to\infty$, the empirical spectral distribution of $M_n$ converges almost surely to a probability measure $\mu$ such that for $k\geq 0$, 
\begin{align*}
 \int x^{2k} d\mu &=\sum_{j=1}^{C_k}\lim_{n\to\infty}t(T_j^{k+1},W_n),	\quad 
 \int x^{2k+1} d\mu =0.
 \end{align*} 

\item If $\delta_{\Box}(W_{n},W)\to 0$ for some graphon $W$  as $n\to\infty$, then  for all $k\geq 0$, 
\begin{align*}
	\int x^{2k} d\mu &=\sum_{j=1}^{C_k}t(T_j^{k+1},W), \quad 
 \int x^{2k+1} d\mu =0.
\end{align*} 
\end{enumerate}
\end{theorem}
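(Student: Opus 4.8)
The plan is to run the moment method, expanding $\int x^{2k}\,dF^{M_n}(x) = \frac{1}{n}\operatorname{tr} M_n^{2k} = \frac{1}{n^{k+1}}\sum \mathbb E[a_{i_1 i_2} a_{i_2 i_3}\cdots a_{i_{2k} i_1}]$ over closed walks of length $2k$ on $[n]$, and identifying the surviving terms. First I would show the expected moments converge: since the $a_{ij}$ are independent (up to symmetry) and centered, a closed walk contributes to $\mathbb E$ only if every edge is traversed at least twice; a standard counting argument (as in the classical Wigner proof, e.g. Anderson--Guionnet--Zeitouni, Lemma 2.1.6) shows that the only walks surviving the $n^{k+1}$ normalization in the limit are those whose edge-multiset is a tree on $k+1$ distinct vertices with each edge used exactly twice — these are counted by Dyck paths / rooted planar trees, giving the Catalan number $C_k$ of combinatorial ``shapes.'' The key new point, versus the constant-variance case, is bookkeeping the variances: a walk of shape $T_j^{k+1}$ contributes $\frac{1}{n^{k+1}}\sum_{i_1,\dots,i_{k+1}\ \text{distinct}} \prod_{uv\in E(T_j^{k+1})} s_{i_u i_v}$, and by definition of $W_n$ as the step-graphon of $S_n$ this sum is exactly $t(T_j^{k+1},W_n)$ up to a $o(1)$ correction coming from the ``distinct indices'' constraint and from the diagonal (both negligible by condition (3), $\sup_{ij}s_{ij}\le C$). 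For odd moments the tree-with-doubled-edges structure is impossible on an odd-length walk, so $\frac{1}{n}\mathbb E\operatorname{tr} M_n^{2k+1}\to 0$. Lindeberg's condition \eqref{lind} is what lets us truncate the entries $a_{ij}\mapsto a_{ij}\mathbf 1(|a_{ij}|\le \eta\sqrt n)$ so that higher moments of the truncated variables are controlled and the walks with an edge of multiplicity $\ge 3$ (which in the constant-variance case need bounded higher moments) are genuinely $o(1)$; one then removes the truncation by letting $\eta\to 0$, using that the truncation changes $\frac1n\operatorname{tr}$ by a vanishing amount in $L^1$.

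Next I would upgrade convergence of expected moments to almost sure convergence of $F^{M_n}$. For this I bound the variance $\operatorname{Var}\big(\frac1n\operatorname{tr} M_n^{2k}\big)$: expanding the second moment as a sum over pairs of closed walks, the pairs that are ``connected'' (share a vertex) and survive contribute $O(1/n^2)$ rather than $O(1)$, again by the independence of entries and the uniform bound $\sup_{ij}s_{ij}\le C$; summing these Borel--Cantelli tails over $n$ gives $\frac1n\operatorname{tr} M_n^{2k}\to \int x^{2k}\,d\mu$ almost surely for each fixed $k$. Since the limiting moments satisfy $\int x^{2k}\,d\mu = \sum_{j=1}^{C_k}\lim_n t(T_j^{k+1},W_n) \le C^k\cdot C_k\cdot C_k \le (4C)^k$ (using $0\le t(T,W_n)\le C^{|E(T)|}$ and the trivial bound on the number of trees), the sequence $(\int x^{2k}\,d\mu)_k$ grows at most geometrically, so by Carleman's condition the moments determine a unique compactly supported probability measure $\mu$; combined with a.s.\ convergence of all moments this gives $F^{M_n}\Rightarrow\mu$ almost surely. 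This establishes part (1). For part (2): if $\delta_{\Box}(W_n,W)\to 0$, then by Theorem~\ref{graphonconvergence} (after normalizing by $C$ so the graphons lie in $\mathcal W_0$) we have $t(F,W_n)\to t(F,W)$ for every finite simple graph $F$, in particular for every finite tree $T$; hence the hypothesis of part (1) holds and $\lim_n t(T_j^{k+1},W_n) = t(T_j^{k+1},W)$, which is exactly the claimed formula.

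The main obstacle I anticipate is the careful identification $\frac{1}{n^{k+1}}\sum_{i_1,\dots,i_{k+1}\ \text{distinct}} \prod_{uv\in E(T_j^{k+1})} s_{i_u i_v} = t(T_j^{k+1},W_n) + o(1)$ together with making sure no other walk shapes contribute in the limit. Concretely there are three things to get right: (a) walks where some edge has multiplicity $\ge 3$ — these require the truncation-plus-Lindeberg argument to kill, and one must check the truncation error is uniform in the walk; (b) walks whose underlying graph is a tree with doubled edges but where the $k+1$ indices are \emph{not} all distinct — these are $O(1/n)$ by a dimension count since collapsing two indices loses a free summation variable, and the $\sup s_{ij}\le C$ bound keeps the summand bounded; (c) the passage from ``sum over distinct ordered indices divided by $n^{k+1}$'' to the graphon integral $t(T_j^{k+1},W_n)=\int_{[0,1]^{k+1}}\prod_{uv\in E} W_n(x_u,x_v)\,dx$ — this is essentially the statement that the step function $W_n$ has the $s_{ij}$ as its values on the grid squares, so the integral is a Riemann-type sum equal to $n^{-(k+1)}\sum_{\text{all ordered}} \prod s_{i_u i_v}$ exactly (no error from $W_n$ itself), and the only correction is again the $O(1/n)$ from restricting to distinct indices. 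None of these steps is deep, but assembling them so that the error terms are controlled uniformly as $\eta\to 0$ and $n\to\infty$ in the right order is where the real work lies; everything else is the standard Wigner moment/concentration machinery.
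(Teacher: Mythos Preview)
Your proposal is correct and follows essentially the same route as the paper: truncate via Lindeberg (the paper packages this as Lemma~\ref{sufficient}), expand traces over closed walks, identify the surviving double-covered trees and rewrite their contribution as $t(T_j^{k+1},W_n)$ (the paper's Lemma~\ref{prop}), concentrate via Borel--Cantelli, invoke Carleman, and for part (2) apply Theorem~\ref{graphonconvergence}. The only cosmetic difference is that the paper bounds the \emph{fourth} centered moment by $o(1/n^2)$ rather than the variance by $O(1/n^2)$; either suffices for summability, and note that for the covariance to be nonzero the two walks must share an \emph{edge}, not merely a vertex, which is what actually yields the $O(1/n^2)$ count.
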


\begin{rmk}
Similar moment formulas appear in the study of traffic distributions in free probability theory \cite{male2011traffic,male2014uniform}. 	
\end{rmk}
 
Using the connection between the moments of the limiting spectral distribution and its Stieltjes transform described in \eqref{pse}, we can derive the  equations for the Stieltjes transform of the limiting measure  by the following theorem. 

\begin{theorem}\label{main2}
Let $A_n$ be a general Wigner-type matrix and $W_n$ be the corresponding graphon of $S_n$. If $\delta_{\Box}(W_n,W)\to 0$ for some graphon $W$, then the empirical spectral distribution of $M_n:=\frac{A_n}{\sqrt n}$ converges almost surely to a probability measure $\mu$ whose Stieltjes transform $s(z)$ is an analytic solution defined on $ \mathbb C^+$ by  the following equations:
\begin{align}
s(z)&=\int_{0}^{1} a(z,x) dx,\label{QVE1} \\
a(z,x)^{-1}&=z-\int_{0}^{1} W(x,y)a(z,y) dy, \quad x\in [0,1],	\label{QVE2}
\end{align}
where $a(z,x)$ is the unique analytic solution of \eqref{QVE2} defined on $\mathbb C^+\times [0,1]$. 

Moreover, for $|z|>2\|W \|^{1/2}_{\infty}$,
\begin{align}
a(z,x) &=\sum_{k=0}^{\infty}\frac{\beta_{2k}(x)}{z^{2k+1}}, \quad 
\beta_{2k}(x):=\sum_{j=1}^{C_k}t_{x}(T_{j}^{k+1},W),\label{DEF2}\\
\text{where}\quad  t_{x_1}(T_{j}^{k+1},W):&=\int_{[0,1]^{k}}\prod_{uv\in E(T_{j}^{k+1})} W(x_u,x_v) \prod_{i=2}^{k+1} dx_i. 	\label{DEF}
 \end{align}

\end{theorem}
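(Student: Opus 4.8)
The plan is to bootstrap from the moment formula of Theorem \ref{main}(2), which has already established that under $\delta_{\Box}(W_n,W)\to 0$ the ESD of $M_n$ converges a.s. to a measure $\mu$ with even moments $\beta_{2k}=\sum_{j=1}^{C_k}t(T_j^{k+1},W)$ and vanishing odd moments. First I would show $\mu$ is compactly supported with $\mathrm{supp}(\mu)\subseteq[-2\|W\|_\infty^{1/2},2\|W\|_\infty^{1/2}]$: since $0\le W\le\|W\|_\infty$ pointwise (after noting $W\in\mathcal W_0$ up to scaling, or directly bounding the integrand), each $t(T_j^{k+1},W)\le\|W\|_\infty^k$, and there are $C_k\le 4^k$ rooted planar trees, so $\beta_{2k}\le(4\|W\|_\infty)^k$; a Carleman-type bound then gives the support estimate and, via \eqref{pse}, the power series for the Stieltjes transform $s(z)=\sum_k\beta_{2k}z^{-(2k+1)}$ converges for $|z|>2\|W\|_\infty^{1/2}$.

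Next I would introduce the generating function $a(z,x):=\sum_{k\ge 0}\beta_{2k}(x)z^{-(2k+1)}$ with $\beta_{2k}(x):=\sum_{j=1}^{C_k}t_x(T_j^{k+1},W)$ as in \eqref{DEF2}, where the rooting is at the labeled vertex $x_1=x$. The same bound $t_x(T_j^{k+1},W)\le\|W\|_\infty^k$ shows this series converges for $|z|>2\|W\|_\infty^{1/2}$ uniformly in $x$, defining an analytic function of $z$ there; and integrating \eqref{DEF2} over $x\in[0,1]$ recovers $s(z)=\int_0^1 a(z,x)\,dx$, which is \eqref{QVE1}. The heart of the argument is the combinatorial recursion behind \eqref{QVE2}: a rooted planar tree with $k+1$ vertices decomposes uniquely according to the children $u_1<\dots<u_m$ of the root (in the planar order), each $u_\ell$ being the root of a rooted planar subtree; summing over all such decompositions and over the number of children, one gets
\begin{align*}
a(z,x) &= \frac{1}{z}\sum_{m\ge 0}\frac{1}{z^m}\left(\int_0^1 W(x,y)\,a(z,y)\,dy\right)^m = \frac{1}{z}\cdot\frac{1}{1-\frac{1}{z}\int_0^1 W(x,y)a(z,y)\,dy},
\end{align*}
which rearranges to $a(z,x)^{-1}=z-\int_0^1 W(x,y)a(z,y)\,dy$, i.e.\ \eqref{QVE2}. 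I would verify this identity first at the level of formal power series by matching the coefficient of $z^{-(2k+1)}$ on both sides (each side's coefficient is a sum over decorations of rooted planar trees with $k+1$ vertices by the weight $\prod_{uv}W(x_u,x_v)$ integrated over the non-root coordinates), then invoke absolute convergence in the region $|z|>2\|W\|_\infty^{1/2}$ to upgrade to an identity of analytic functions.

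Finally I would address uniqueness and the extension to all of $\mathbb C^+$. For uniqueness of the analytic solution $a(z,\cdot)$ of \eqref{QVE2} on $\mathbb C^+\times[0,1]$ I would cite the quadratic vector equation theory of Ajanki--Erd\H{o}s--Kr\"uger \cite{ajanki2015quadratic}: their results guarantee that \eqref{QVE2} has a unique solution in the class of functions with $\mathrm{Im}\,a(z,x)>0$ for $z\in\mathbb C^+$, and that it extends analytically from a neighborhood of $\infty$ to all of $\mathbb C^+$; since the power-series solution constructed above has positive imaginary part near $\infty$ (being $z^{-1}$ plus higher-order corrections, or because it is the Stieltjes-transform density of a positive measure fibered over $[0,1]$), it must coincide with that unique solution, and then $s(z)=\int_0^1 a(z,x)\,dx$ extends to $\mathbb C^+$ and agrees with the Stieltjes transform of $\mu$ there by analytic continuation. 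The main obstacle I anticipate is making the planar-tree decomposition in the recursion fully rigorous — in particular checking that the bijection between a rooted planar tree and its ordered sequence of rooted planar subtrees interacts correctly with the depth-first labeling conventions and with the assignment of $W$-weights to edges, so that the formal-power-series identity is term-by-term exact rather than merely up to the Catalan count; everything else is standard analytic bookkeeping plus a citation to \cite{ajanki2015quadratic}.
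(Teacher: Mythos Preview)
Your proposal is correct and follows the same overall strategy as the paper: define $a(z,x)$ as the generating series of rooted tree densities, obtain \eqref{QVE1} by integrating over the root label, derive \eqref{QVE2} by a recursive decomposition of rooted planar trees, and then cite \cite{ajanki2015quadratic} for uniqueness and analytic continuation to all of $\mathbb C^+$.

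The one substantive difference is in the tree recursion used to obtain \eqref{QVE2}. The paper splits a rooted planar tree into its \emph{last} root-branch $T_j^{l+1*}$ together with the remaining tree $T_i^{k+1}$ at the root, and uses the binary Catalan recurrence $C_s=\sum_{k+l=s-1}C_kC_l$ to show directly that $a(z,x)\int_0^1 W(x,y)a(z,y)\,dy=za(z,x)-1$. You instead decompose by \emph{all} children of the root simultaneously and sum the resulting geometric series $\frac{1}{z}\sum_{m\ge 0}\bigl(\frac{1}{z}\int W(x,y)a(z,y)\,dy\bigr)^m$. Both are standard and equivalent; yours reaches the closed form of \eqref{QVE2} in one step, while the paper's version makes the underlying Catalan bijection and the depth-first labeling more explicit (which is exactly the ``obstacle'' you flag, and which the paper handles carefully via the construction of $T_{i,j}^{k+l+2}$).
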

\noindent
\begin{rmk} 
In \eqref{DEF}, $t_{x_1}(T_{j}^{k+1},W)$ is a function of $x_1$, and  in \eqref{DEF2} $t_{x}(T_{j}^{k+1},W)$ is the function evaluated at $x_1=x$. 
\end{rmk}

Theorem \ref{main2} holds under a stronger condition compared to Theorem \ref{main}. We provide two examples in Section \ref{generalized} to show that it's possible to have tree densities converge but the empirical graphon does not converge under the cut metric. We show that  the limiting spectral distribution can still exist.  However, to have the equations \eqref{QVE1} and \eqref{QVE2}, we need a well-defined measurable function $W$ that $W_n$ converges to, therefore we need the condition of graphon convergence under the cut metric.

 \eqref{QVE1} and \eqref{QVE2} have been known as \textit{quadratic vector equations} in \cite{ajanki2015quadratic,ajanki2016singularities}, where the properties of the solution are discussed under more assumptions on  variance profiles to prove local law and universality. A similar expansion as \eqref{DEF2} and \eqref{DEF} has been derived  in \cite{erdHos2018bounds}. The central role of \eqref{QVE2} in the context of random matrices has been recognized by many authors, see \cite{girko2001theory,shlyakhtenko1996random,helton2007operator}.

Wigner-type matrices is a special case for the  Kronecker random matrices introduced in  \cite{alt2017location}, and the  global law has been proved in Theorem 2.7 of \cite{alt2017location}, which states the following: let $H_n$ be  a Kronecker random matrix and $\mu_n^{H} $ be its empirical spectral distribution, then there exists a deterministic sequence of probability measure $\mu_n$ such that $\mu_n^{H}-\mu_n$ converges weakly in probability to the zero measure as $n\to\infty$.    In particular, for Wigner-type matrices,  the global law holds under the assumptions of  bounded variances and bounded moments. Our Theorem \ref{main} and Theorem \ref{main2} give a moment method proof of the global law in \cite{alt2017location} for Wigner-type matrices under bounded variances and Lindeberg's condition. Our new contribution is a  weaker condition for the convergence of the empirical spectral distribution $\mu_n^{M}$ of $M_n$.

In Section \ref{subsec:prof1} and Section \ref{subsec:prof2} we provide the proofs for Theorem \ref{main} and Theorem \ref{main2} respectively. We briefly summarize the proof ideas here. In the proof of Theorem \ref{main}, we revisit the standard path-counting moment method proof for the semicircle law (see for example \cite{bai2010spectral}). Since our matrix model has a variance profile, we encode different variances as weights on the paths and represent the moments of the empirical measure as a sum of homomorphism densities. Then if the tree homomorphism densities converge, the limiting spectral distribution exists. 
 
 For  the proof of Theorem \ref{main2}, since we assume that the variance profile  convergences under the cut norm, we can obtain a limiting graphon $W$. To obtain \eqref{QVE2} We  expand $a(z,x)$ in \eqref{QVE2} as a power series of homomorphism density from partially labeled trees to graphon $W$ denoted by $\beta_{2k}(x)$ in \eqref{DEF2}. Then we prove a graphon version of the Catalan number recursion formula for $\beta_{2k}(x)$ in \eqref{qve} and show that this essentially implies the quadratic vector equations \eqref{QVE1} and \eqref{QVE2}. This  recursion formula \eqref{qve} for tree homomorphism densities to a graphon could be of independent interest.

\subsection{Proof of Theorem \ref{main}} \label{subsec:prof1}
Using the truncation argument as in \cite{bai2010spectral,ding2014spectral}, we can first apply moment methods to a general Wigner-type matrix with bounded entries in the following lemma.

\begin{lemma}\label{prop}
Assume a Hermitian random matrix $A_n$ with a variance profile $S_n$ satisfies 
\begin{enumerate}
	\item  $\mathbb Ea_{ij}=0,\mathbb E|a_{ij}|^2=s_{ij}$. $\{a_{ij}\}_{1\leq i,j\leq n}$ are independent up to symmetry.
	\item $|a_{ij}|\leq \eta_n\sqrt n$ for some positive decreasing sequence $\eta_n$ such that $\eta_n\to 0$.
	\item $\sup_{ij}s_{ij}\leq C$ for a constant $C\geq 0$.
\end{enumerate}
Let $W_n$ be the graphon representation of $S_n$. Then for every fixed integer $k\geq 0$, we have the following asymptotic formulas:
\begin{align}\label{asymptotic}
 \frac{1}{n}\mathbb{E}[\textnormal{tr}M_n^{2k}]&=\sum_{j=1}^{C_k}t(T_j^{k+1},W_n)+o(1),\\
 \frac{1}{n}\mathbb{E}[\textnormal{tr}M_n^{2k+1}]&=o(1),\label{asymptotic2}
 \end{align}
 where $\{T_{j}^{k+1}, 1\leq j\leq C_k\}$ are all rooted planar trees of $k+1$ vertices.
 \end{lemma}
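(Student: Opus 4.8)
The plan is to carry out the standard combinatorial moment-method computation for $\frac{1}{n}\mathbb{E}[\textnormal{tr}\,M_n^{m}]$, keeping careful track of the variance profile. Expanding the trace,
\[
\frac{1}{n}\mathbb{E}[\textnormal{tr}\,M_n^{m}]=\frac{1}{n^{1+m/2}}\sum_{i_1,\dots,i_m=1}^n \mathbb{E}\left[a_{i_1 i_2}a_{i_2 i_3}\cdots a_{i_m i_1}\right],
\]
where each summand corresponds to a closed walk $i_1\to i_2\to\cdots\to i_m\to i_1$ on the complete graph with loops on $[n]$. The first step is to organize these walks by the isomorphism type of the multigraph they trace out, or more precisely by the partition of $\{1,\dots,m\}$ recording which indices coincide. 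By independence of the entries (up to the symmetry constraint) and the mean-zero assumption, a walk contributes $0$ unless every edge it uses is traversed at least twice; combined with the bound $|a_{ij}|\le\eta_n\sqrt n$ and $\sup_{ij}s_{ij}\le C$, this is exactly the setup where the classical counting argument (as in \cite{bai2010spectral}) shows that walks using an edge more than twice, or having fewer than $m/2$ distinct edges, contribute $o(1)$ after the $n^{1+m/2}$ normalization. The surviving walks for $m=2k$ are precisely those whose underlying graph is a tree on $k+1$ vertices and which traverse each of its $k$ edges exactly twice — these are the walks encoded by rooted planar trees via the Dyck-path bijection — while for $m=2k+1$ no such walks exist, giving \eqref{asymptotic2}.

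Next I would evaluate the contribution of the tree-type walks exactly (to leading order). Fix a rooted planar tree $T_j^{k+1}$ with vertex set $\{v_1,\dots,v_{k+1}\}$ in depth-first-search order. The walks of this type are obtained by choosing an injective assignment of distinct indices in $[n]$ to the $k+1$ vertices; each such walk contributes $\prod_{uv\in E(T_j^{k+1})}\mathbb{E}|a_{\phi(u)\phi(v)}|^2 = \prod_{uv\in E}s_{\phi(u)\phi(v)}$ (here I use that for a doubly-traversed edge the relevant expectation is $\mathbb{E}|a_{ij}|^2$, since $a_{ij}=\bar a_{ji}$, so $\mathbb{E}[a_{ij}a_{ji}]=s_{ij}$). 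Summing over injective $\phi$ and dividing by $n^{k+1}$ (the leftover factor $n^{1+m/2}=n^{k+1}$), and noting that relaxing "injective" to "arbitrary" changes the sum by $O(n^{k})=o(n^{k+1})$ because of the boundedness of the $s_{ij}$, we get
\[
\frac{1}{n^{k+1}}\sum_{\phi:V\to[n]}\ \prod_{uv\in E(T_j^{k+1})} s_{\phi(u)\phi(v)}+o(1).
\]
By construction of the graphon representation $W_n$ of $S_n$ — dividing $[0,1]$ into $n$ intervals $I_1,\dots,I_n$ of length $1/n$ and setting $W_n=s_{ij}$ on $I_i\times I_j$ — this Riemann-type sum is exactly $t(T_j^{k+1},W_n)$, since $\int_{[0,1]^{k+1}}\prod_{uv\in E}W_n(x_u,x_v)\prod dx_u = n^{-(k+1)}\sum_\phi\prod_{uv}s_{\phi(u)\phi(v)}$. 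Summing over the $C_k$ rooted planar trees yields \eqref{asymptotic}.

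The main obstacle — the step I would spend the most care on — is the error bookkeeping in the reduction to tree-type walks: showing rigorously that walks with a repeated-thrice edge, or with a "cycle" (fewer than $m/2$ distinct edges), or otherwise non-tree structure, are genuinely $o(n^{1+m/2})$ in aggregate. The subtlety relative to the constant-variance Wigner case is that we cannot simply factor out a uniform variance; instead one bounds each such contribution by $C^{(\text{number of edges})}$ times the number of walks of that combinatorial type times the appropriate power of $\eta_n\sqrt n$ for any edge traversed $\ge 3$ times, and then checks the power of $n$ is strictly subleading — with the factor $\eta_n^{\,\cdot}\to 0$ absorbing the boundary cases where the naive power of $n$ would only be $O(1)$ rather than $o(1)$. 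This is precisely the place the hypothesis $|a_{ij}|\le\eta_n\sqrt n$ with $\eta_n\to0$ is used, and it is the technical heart of the lemma; everything else is the classical Wigner path count carried through with weights $s_{ij}$ and then recognized as a homomorphism density. Since this estimate is standard (cf. \cite{bai2010spectral,ding2014spectral}), in the write-up I would state it as such and focus the details on the weighted identification with $t(T_j^{k+1},W_n)$.
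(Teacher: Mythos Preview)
Your proposal is correct and follows essentially the same approach as the paper: expand the trace over closed walks, discard walks with a singly-traversed edge by independence, bound walks with $p<k$ distinct edges (or any edge traversed $\ge 3$ times) using $|a_{ij}|\le\eta_n\sqrt n$ and $\sup s_{ij}\le C$ to get the $o(1)$ error, then identify the surviving tree-type walks with $\sum_j t(T_j^{k+1},W_n)$ by relaxing injectivity at cost $O(1/n)$. The paper's write-up makes the error bounds fully explicit (partitioning good walks by the number $p$ of distinct edges and summing $n^{p+1}p^h\eta_n^{h-2p}C^p/n^{h/2+1}$), which is exactly the bookkeeping you flag as the technical heart.
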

 
 \begin{proof}
 We start with expanding the expected normalized trace. For any integer $h\geq 0$,
\begin{align*}
\frac{1}{n}\mathbb{E}[\textnormal{tr}M_n^{h}]&=\frac 1 {n^{h/2+1}} \mathbb E\text{tr}(A_n^{h})
=\frac 1 {n^{h/2+1}}\sum_{1\leq i_1,\dots,i_{h}\leq n} \mathbb E [a_{i_1i_2}a_{i_2i_3}\cdots a_{i_{h}i_1}].
\end{align*}
Each term in the above sum corresponds to a closed walk (with possible self-loops) $(i_1,i_2,\dots, i_h)$ of length $h$ in the complete graph  $K_n$ on  vertices $\{1,\dots,n\}$.  Any closed walk can be classified into one of the following three categories.
\begin{itemize}
\item $\mathcal C_1$: All closed walks such that each edge appears exactly twice.
\item  $\mathcal C_2$: All closed walks that have at least one  edge  which appears only once.
\item  $\mathcal C_3$:  All other closed walks.

\end{itemize}
By independence, it's easy to see that every term corresponding to a walk in $\mathcal C_2$ is zero. We call a walk that is not in $\mathcal C_2$ a \textit{good walk}. Consider a good walk that uses $p$ different edges $e_1,\dots,e_p$
with corresponding multiplicity $t_1,\dots, t_p$ and each $t_i\geq 2$, such that $t_1+\cdots +t_p=h$. Now the term corresponding to a good walk has the form
$\mathbb E [a_{e_1}^{t_1}\cdots a_{e_p}^{t_p}].
$
Such a walk uses at most $p+1$ vertices and an upper bound for the number of good walks of this type is $n^{p+1}p^h$.
Since $|a_{ij} |\leq \eta_n\sqrt n$, and $\sup_{ij}\text{Var}(a_{ij})=\sup_{ij}s_{ij}\leq C$, we have
\begin{align*}\mathbb E a_{e_1}^{t_1}\cdots a_{e_p}^{t_p}\leq \mathbb  E[a_{e_1}^2]\cdots \mathbb E[a_{e_p}^2](\eta_n\sqrt n)^{t_1+\cdots+ t_p-2p}\leq \eta_n^{h-2p}n^{h/2-p}C^p. 
\end{align*}
When $h=2k+1$, we have
\begin{align*}
\frac{1}{n}\mathbb{E}[\textnormal{tr}M_n^{2k+1}]&=\frac 1 {n^{h/2+1}}\sum_{p=1}^k\sum_{\text{good walks of $p$ edges}} \mathbb E[a_{e_1}^{t_1}\cdots a_{e_p}^{t_p}]\\
&\leq \frac 1 {n^{k+3/2}}\sum_{p=1}^k n^{p+1}p^h(\eta_n^{h-2p}n^{h/2-p})C^p
\\
&=\sum_{p=1}^k p^h\eta_n^{h-2p}C^p=O(\eta_n)=o(1).
\end{align*}
When $h=2k$, let $S_i$ denote the sum of all terms in $\mathcal C_i, 1\leq i\leq 3$. By independence, we have $S_2=0$. 
Each walk in $\mathcal C_3$ uses $p$ different edges with $p\leq k-1$. We then have
\begin{align*}
S_3&=\frac 1 {n^{h/2+1}}\sum_{p=1}^{k-1}\sum_{\text{good walk of $p$ edges}} \mathbb Ea_{e_1}^{t_1}\cdots a_{e_p}^{t_p}\\
&\leq \frac 1 {n^{k+1}}\sum_{p=1}^{k-1} n^{p+1}p^h\left(\eta_n^{h-2p}n^{h/2-p}\right)\left(\sup_{ij}s_{ij}\right)^p
\\
&=\sum_{p=1}^{k-1}p^h\eta_n^{h-2p}C^p=o(1).
\end{align*}

Now it remains to compute $S_1$. For the closed walk that contains a self-loop, the number of distinct vertices is at most $k$, which implies the total contribution of such closed walks is  $O(n^k)$, hence  such terms are negligible in the limit of $S_1$. We only need to consider closed walks that use $k+1$ distinct vertices.
Each closed walk in $\mathcal C_1$ with $k+1$ distinct vertices in $\{1,\dots n\}$ is a closed walk on a tree of $k+1$ vertices that visits each edge twice.  
\begin{figure}
	\includegraphics[width=2 cm]{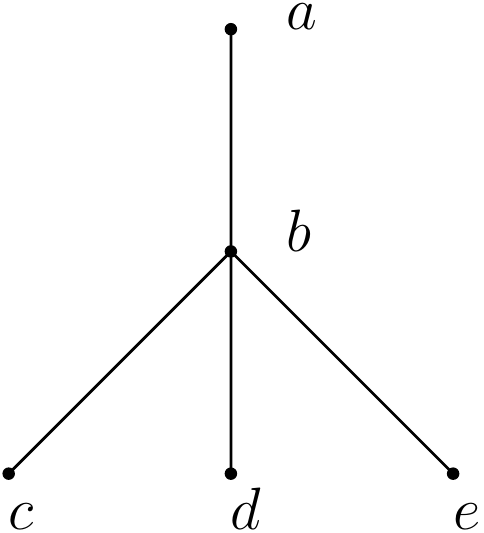}	
	\caption{ A closed walk $abcbdbeba$ corresponds to a  labeling of the rooted planar tree.}
	\label{label}
\end{figure}
Given an unlabeled rooted planar tree $T$ and
a depth-first search closed walk with vertices chosen from $[n]$, there is a one-to-one correspondence between such walk and a labeling of  $T$ (See Figure \ref{label}). There are $C_k$ many rooted  planar trees with $k+1$ vertices and for each rooted planar tree $T_{j}^{k+1}$, the ordering of the vertices from $1$ to $k+1$ is fixed by its depth-first search. Let $T_{l,j}^{k+1}$ be any labeled tree with the unlabeled rooted tree $T_j^{k+1}$ and a labeling $l=(l_1,\dots, l_{k+1}), 1\leq l_i\leq n, 1\leq i\leq k+1$ for its vertices from $1$ to $k+1$. For terms in $\mathcal{C}_1$, any possible labeling $l$ must satisfy that $l_1,\dots, l_{k+1}$ are distinct. 
Let $E(T_{l,j}^{k+1})$ be the edge set of $T_{l,j}^{k+1}$. Then $S_1$ can be written as
\begin{align}
S_1&=\frac 1 {n^{k+1}}\sum_{j=1}^{C_k}\sum_{l=(l_1,\dots,l_{k+1})}  \mathbb E \prod_{e\in E(T_{l,j}^{k+1})}a_{e}^{2} =\sum_{j=1}^{C_k}\frac{1}{n^{k+1}}\sum_{l=(l_1,\dots,l_{k+1})}  \prod_{e\in E(T_{l,j}^{k+1})}s_{e}.\label{316}
\end{align}
Consider 
$$
S_1':=\sum_{j=1}^{C_k}\frac{1}{n^{k+1}}\sum_{1\leq l_1,\dots,l_{k+1}\leq n}  \prod_{e\in E(T_{l,j}^{k+1})}s_{e}, 
$$
where  $l$ now stands for every possible labelling which allows  some of $l_1,\dots l_{k+1}$ to coincide, then we have 
\begin{align}
|S_1-S_1'|\leq \frac{1}{n^{k+1}}C_k(k+1)n^k(\sup_{ij}s_{ij}	)^k=O\left(\frac{1}{n}\right).\notag
\end{align}
On the other hand,
\begin{align}
t(T_j^{k+1},W_n)&=\int_{[0,1]^{k+1}} \prod_{uv\in E(T_{j}^{k+1})}W_n(x_u,x_v)dx_1 \dots dx_{k+1}\notag\\
&= \frac{1}{n^{k+1}}\sum_{1\leq l_1,\dots ,l_{k+1}\leq n} \prod_{uv\in E(T_{l,j}^{k+1})} s_{l_ul_v}=\frac{1}{n^{k+1}}\sum_{1\leq l_1,\dots, l_{k+1}\leq n}\prod_{e\in E(T_{l,j}^{k+1})} s_{e}.\label{321}
\end{align} 
Note that $\displaystyle S_1'=\sum_{j=1}^{C_k}t(T_{j}^{k+1},W_n).$
From \eqref{316} and \eqref{321}, we get 
$\displaystyle S_1=\sum_{j=1}^{C_k}t(T_j^{k+1},W_n)+o(1).$
Combining the estimates of $S_1,S_2$ and $S_3$, the conclusion of Lemma \ref{prop} follows.	 
 \end{proof}

 Lemma \ref{prop} connects the moments of the trace of $M_n$ to  homomorphism densities from trees to the graphon $W_n$. To proceed with the proof of Theorem \ref{main}, we need the following lemma.

 \begin{lemma}\label{sufficient}
	In order to prove the conclusion of Theorem \ref{main}, it suffices to prove it under the following conditions:
\begin{enumerate}
	\item $\mathbb Ea_{ij}=0$, $\mathbb E|a_{ij}|^2=s_{ij}$ and $\{a_{ij}\}_{1\leq i,j\leq n}$ are independent up to symmetry.
	\item $|a_{ij}|\leq \eta_n\sqrt n$ for some positive decreasing sequence $\eta_n$ such that $\eta_n\to 0$.
	\item $  \sup_{ij}s_{ij}\leq C$. \label{D'} for some constant $C\geq 0$.
\end{enumerate}
\end{lemma}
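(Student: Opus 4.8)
The plan is to carry out a standard truncation argument showing that the conclusion of Theorem \ref{main} for a general Wigner-type matrix $A_n$ satisfying conditions (1)--(3) (in particular Lindeberg's condition) follows from the same conclusion for a \emph{modified} matrix whose entries are bounded by $\eta_n\sqrt{n}$ with $\eta_n\to 0$. First I would, given $\varepsilon>0$, use Lindeberg's condition \eqref{lind} to choose a slowly decreasing sequence $\eta_n\to 0$ such that
\[
\frac{1}{n^2}\sum_{i,j}\mathbb E\bigl[|a_{ij}|^2\mathbf 1(|a_{ij}|\geq \eta_n\sqrt n)\bigr]\to 0;
\]
this is possible by a routine diagonalization over a sequence $\eta=1/m$. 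Then define the truncated entries $\widehat a_{ij}:=a_{ij}\mathbf 1(|a_{ij}|<\eta_n\sqrt n)$, recenter to $\widetilde a_{ij}:=\widehat a_{ij}-\mathbb E\widehat a_{ij}$ (keeping Hermitian symmetry by coupling $\widetilde a_{ji}=\overline{\widetilde a_{ij}}$), and rescale the variances: since $s_{ij}':=\mathbb E|\widetilde a_{ij}|^2$ may differ slightly from $s_{ij}$, one either sets $\widetilde A_n$ directly and works with the variance profile $S_n'=(s_{ij}')$, or multiplies each entry by $\sqrt{s_{ij}/s_{ij}'}$ to restore the exact profile. Let $\widetilde A_n$ be the resulting matrix and $\widetilde M_n:=\widetilde A_n/\sqrt n$.

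The key estimates are then the following three. (i) \textbf{Rank/norm perturbation is negligible.} By the rank inequality for empirical spectral distributions (Bai's inequality) or by the Hoffman--Wielandt inequality, $\|F^{M_n}-F^{\widetilde M_n}\|_\infty$ or the Lévy distance is controlled by $\frac1n\cdot(\text{rank of }A_n-\widehat A_n)$ and by $\frac1{n^2}\|A_n-\widehat A_n\|_F^2=\frac1{n^2}\sum_{i,j}|a_{ij}|^2\mathbf 1(|a_{ij}|\ge\eta_n\sqrt n)$, both of which tend to $0$ a.s.\ (after a Borel--Cantelli / concentration argument, using independence and the chosen $\eta_n$). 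So $F^{M_n}$ and $F^{\widetilde M_n}$ have the same limit, if either exists. (ii) \textbf{Recentering is negligible.} We have $|\mathbb E\widehat a_{ij}|=|\mathbb E a_{ij}\mathbf 1(|a_{ij}|\ge\eta_n\sqrt n)|\le \frac{1}{\eta_n\sqrt n}\mathbb E[|a_{ij}|^2\mathbf 1(\cdots)]$, so $\frac1{n^2}\sum_{i,j}|\mathbb E\widehat a_{ij}|^2$ and $\frac1n\|\mathbb E\widehat A_n\|_F^2$ are $o(1)$ divided by $\eta_n^2$; choosing $\eta_n$ to decay slowly enough (e.g.\ slower than the square root of the Lindeberg quantity) makes this vanish, and again Hoffman--Wielandt transfers the bound to the spectra. (iii) \textbf{Variance rescaling does not change the graphon limit.} One checks $|s_{ij}-s_{ij}'|\le 2|\mathbb E\widehat a_{ij}|^2 + $ (tail term), so $\|S_n-S_n'\|$ is uniformly small in the relevant averaged sense; consequently the graphon $W_n'$ of $S_n'$ satisfies $t(T,W_n')-t(T,W_n)\to 0$ for every finite tree $T$ (tree homomorphism densities are Lipschitz in the entrywise profile when the profile is uniformly bounded), and likewise $\delta_\Box(W_n',W)\to 0$ iff $\delta_\Box(W_n,W)\to 0$. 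Hence the hypotheses of Theorem \ref{main}(1) or (2) for $A_n$ carry over to $\widetilde A_n$, and the asserted moment formulas are the same.

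Putting (i)--(iii) together: $\widetilde A_n$ satisfies the three hypotheses of Lemma \ref{sufficient} (bounded entries, independence up to symmetry, matching mean/variance structure, bounded variance profile), its graphon sequence satisfies the same convergence hypothesis as that of $A_n$ with the same limit, and $F^{M_n}$, $F^{\widetilde M_n}$ share the same almost-sure limit. Therefore, if Theorem \ref{main} holds for matrices satisfying the conditions of Lemma \ref{sufficient}, it holds for $A_n$, which is the claim. I expect the main obstacle to be the bookkeeping in step (i)/(ii): getting \emph{almost sure} convergence (not just in probability) of the spectral perturbation after truncation, which needs a concentration estimate (e.g.\ via McDiarmid/Azuma on $\frac1{n^2}\|A_n-\widehat A_n\|_F^2$, or a subsequence + Borel--Cantelli argument) combined with a careful choice of the rate $\eta_n\to 0$ that is simultaneously slow enough to kill the recentering term and fast enough that the truncation error still vanishes — this is exactly the classical balancing act in truncation proofs for the semicircle law, so it is manageable but must be done with the Lindeberg quantity (rather than a uniform moment bound) driving the rate.
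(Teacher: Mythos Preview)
Your approach is correct and coincides with the paper's: the paper gives no proof of its own but simply states that the argument follows verbatim the truncation proof of Theorem 2.9 in Bai--Silverstein, recording the rank inequality and the L\'evy distance/Frobenius bound (Lemmas \ref{rankinequality} and \ref{levy}) as the two perturbation tools used there. Your step (iii), checking that the tree-density (or cut-metric) hypothesis on $W_n$ transfers to the graphon of the truncated variance profile $S_n'$, is the one consideration specific to the Wigner-type setting that is not already in Bai's proof; you handle it correctly, and with it the reduction is complete.
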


The proof of Lemma \ref{sufficient} follows verbatim as the proof of Theorem 2.9 in \cite{bai2010spectral}, so we do not give it here. The followings are two results that are used in the proof and will be used elsewhere in the paper, so we give them here. See Section A in \cite{bai2010spectral} for further details. 
\begin{lemma}[Rank Inequality] \label{rankinequality} Let $A_n, B_n$ be two $n\times n$ Hermitian matrices.  Let $F^{A_n}, F^{B_n}$ be the empirical spectral distributions of $A_n$ and $B_n$, then
	$$\|F^{A_n}-F^{B_n}\|\leq \frac{\textnormal{rank} (A_n-B_n)}{n},$$
	where $\|\cdot \|$ is the $L^{\infty}$-norm.
\end{lemma}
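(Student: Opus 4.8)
## Proof Proposal for the Rank Inequality

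The statement is a standard comparison inequality for empirical spectral distributions, and the plan is to reduce it to a pointwise bound on the two eigenvalue-counting functions. For a Hermitian matrix $M$ and $x\in\mathbb R$ one has $nF^M(x)=\#\{i:\lambda_i(M)\le x\}$, and by the Courant--Fischer min--max principle this count equals the largest dimension of a subspace $S\subseteq\mathbb C^n$ on which $\langle Mv,v\rangle\le x\|v\|^2$ for every $v\in S$ (take $S$ to be the span of the eigenvectors of $M$ with eigenvalue at most $x$; conversely any subspace of larger dimension meets the span of the eigenvectors with eigenvalue $>x$ nontrivially and hence contains a vector violating the inequality). So it suffices to prove $\abs{nF^{A_n}(x)-nF^{B_n}(x)}\le\textnormal{rank}(A_n-B_n)$ for every $x$.

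Fix $x$, set $C:=A_n-B_n$, $r:=\textnormal{rank}(C)$, and let $S_B$ be a subspace of dimension $m:=nF^{B_n}(x)$ with $\langle B_nv,v\rangle\le x\|v\|^2$ on $S_B$. The key step is to intersect $S_B$ with $\ker C$: since $\dim\ker C\ge n-r$, the subspace $S:=S_B\cap\ker C$ has $\dim S\ge m+(n-r)-n=m-r$, and for $v\in S$ we get $\langle A_nv,v\rangle=\langle B_nv,v\rangle+\langle Cv,v\rangle=\langle B_nv,v\rangle\le x\|v\|^2$. Hence $nF^{A_n}(x)\ge\dim S\ge m-r=nF^{B_n}(x)-r$. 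Swapping the roles of $A_n$ and $B_n$ (and using $\textnormal{rank}(B_n-A_n)=r$) gives the reverse inequality $nF^{B_n}(x)\ge nF^{A_n}(x)-r$, and together these yield $\abs{nF^{A_n}(x)-nF^{B_n}(x)}\le r$, i.e.\ $\|F^{A_n}-F^{B_n}\|\le r/n$.

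An equivalent route is via Weyl's inequalities applied to the rank-$r$ perturbation: writing $A_n=B_n+C$ with $\lambda_{r+1}(C)\le 0$ (at most $r$ nonzero eigenvalues) and $\lambda_{n-r}(C)\ge 0$, Weyl's inequality gives $\lambda_{i+r}(A_n)\le\lambda_i(B_n)$ and, applied to $B_n=A_n-C$, $\lambda_{i+r}(B_n)\le\lambda_i(A_n)$ for all admissible $i$; feeding these interlacing bounds into the counting functions produces the same estimate. Either way there is no genuine obstacle here: the lemma is elementary, and the only point requiring a little care is the bookkeeping at the boundary of the index range (e.g.\ when $m-r\le 0$ or $m=n$), where the asserted inequality holds trivially.
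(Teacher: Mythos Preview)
Your proof is correct. Note, however, that the paper does not actually prove this lemma: it states the Rank Inequality (and the L\'evy distance bound) as standard facts and refers the reader to Section~A of Bai--Silverstein \cite{bai2010spectral} for details, so there is no in-paper argument to compare against. Your subspace-intersection argument via Courant--Fischer is precisely the standard proof one finds in such references, and the alternative sketch through Weyl's interlacing for a rank-$r$ perturbation is an equally valid and well-known route; both are elementary and your bookkeeping remarks about the boundary cases are appropriate.
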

\begin{lemma}[L\'{e}vy Distance Bound]\label{levy} Let $L$ be the L\'{e}vy distance between two distribution functions, we have for any $n\times n$ Hermitian matrices $A_n$ and $B_n$,
	$$L^3(F^{A_n},F^{B_n})\leq \frac{1}{n} \textnormal{tr}[(A_n-B_n)(A_n-B_n)^*].$$
\end{lemma}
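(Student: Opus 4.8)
The plan is to reduce the statement to a comparison of the sorted eigenvalues of $A_n$ and $B_n$, and then to convert an $\ell^2$ bound on the sorted eigenvalue differences into a bound on the L\'evy distance. Since $F^{A_n}$ and $F^{B_n}$ depend only on the eigenvalues, list those of $A_n$ in increasing order as $\lambda_1\le\cdots\le\lambda_n$ and those of $B_n$ as $\mu_1\le\cdots\le\mu_n$. As $A_n$ and $B_n$ are Hermitian we have $(A_n-B_n)^*=A_n-B_n$, so $\textnormal{tr}[(A_n-B_n)(A_n-B_n)^*]=\|A_n-B_n\|_F^2$, the squared Hilbert--Schmidt norm.

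\emph{Step 1: eigenvalue discrepancy.} I would invoke the Hoffman--Wielandt inequality, which yields a permutation $\pi$ of $[n]$ with $\sum_{i=1}^n(\lambda_i-\mu_{\pi(i)})^2\le\|A_n-B_n\|_F^2$, together with the rearrangement inequality, which shows that among all permutations the quantity $\sum_i(\lambda_i-\mu_{\pi(i)})^2$ is minimized by the identity when both lists are sorted increasingly. Combining the two gives
\[
\sigma^2:=\frac{1}{n}\sum_{i=1}^n(\lambda_i-\mu_i)^2\ \le\ \frac{1}{n}\,\textnormal{tr}\big[(A_n-B_n)(A_n-B_n)^*\big].
\]

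\emph{Step 2: from $\ell^2$ to L\'evy.} Put $\varepsilon:=\sigma^{2/3}$ and $k:=\#\{i:|\lambda_i-\mu_i|>\varepsilon\}$. By Markov's inequality $k\varepsilon^2\le\sum_i(\lambda_i-\mu_i)^2=n\varepsilon^3$, hence $k\le n\varepsilon$. I claim $F^{A_n}(x-\varepsilon)-\varepsilon\le F^{B_n}(x)\le F^{A_n}(x+\varepsilon)+\varepsilon$ for all $x\in\mathbb R$; since $A_n,B_n$ play symmetric roles it suffices to prove $F^{B_n}(x)\le F^{A_n}(x+\varepsilon)+\varepsilon$. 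Fixing $x$ and setting $j:=nF^{B_n}(x)=\#\{i:\mu_i\le x\}$, the $\mu_i$ being sorted gives $\{i:\mu_i\le x\}=\{1,\dots,j\}$; every $i\le j$ with $|\lambda_i-\mu_i|\le\varepsilon$ satisfies $\lambda_i\le\mu_i+\varepsilon\le x+\varepsilon$, and at most $k$ such $i$ are excluded, so $nF^{A_n}(x+\varepsilon)=\#\{i:\lambda_i\le x+\varepsilon\}\ge j-k\ge j-n\varepsilon=nF^{B_n}(x)-n\varepsilon$. Thus $L(F^{A_n},F^{B_n})\le\varepsilon$, i.e. $L^3(F^{A_n},F^{B_n})\le\sigma^2$, which is the asserted bound.

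Both ingredients are classical, so I do not anticipate a genuine difficulty; the one place that requires care is Step 2, namely the bookkeeping that converts the count $k$ of poorly matched eigenvalue pairs into the simultaneous horizontal-and-vertical displacement inequalities defining the L\'evy distance, and verifying that the exponent in $\varepsilon=\sigma^{2/3}$ is precisely what balances $k\le n\varepsilon$ against the admissible tolerance $\varepsilon$. (An alternative to Step 1 is to use the Hermitian form $\sum_i(\lambda_i^{\uparrow}-\mu_i^{\uparrow})^2\le\|A_n-B_n\|_F^2$ directly, which also follows from Lidskii's/Mirsky's inequality, but Hoffman--Wielandt plus rearrangement is the most self-contained route.)
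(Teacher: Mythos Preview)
Your argument is correct. The paper does not actually prove this lemma: it is quoted as a standard tool and referred to Section~A of Bai--Silverstein \cite{bai2010spectral}, so there is no in-paper proof to compare against. Your route---Hoffman--Wielandt to bound $\frac{1}{n}\sum_i(\lambda_i-\mu_i)^2$ by $\frac{1}{n}\|A_n-B_n\|_F^2$, then a Chebyshev/counting argument with $\varepsilon=\sigma^{2/3}$ to convert the $\ell^2$ eigenvalue discrepancy into the two-sided L\'evy inequality---is exactly the standard proof in that reference, and the bookkeeping in Step~2 is carried out correctly.
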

 With Lemma \ref{sufficient}, we will prove Theorem \ref{main} under assumptions in Lemma \ref{sufficient}.
\begin{proof}[Proof of Theorem \ref{main}] By Lemma \ref{sufficient}, it suffices to prove Theorem \ref{main} under the conditions (1)-(3) in Lemma \ref{sufficient}. We now assume these conditions hold. Then  \eqref{asymptotic} and \eqref{asymptotic2} in Lemma \ref{prop} can be applied here.

(1) Since for any finite tree $T$, $t(T,W_n)$ converges as $n\to\infty$, we can define 
\begin{align*}
	 \beta_{2k}&:=\lim_{n\to\infty}\frac{1}{n}\mathbb{E}[\textnormal{tr}M_n^{2k}]=\lim_{n\to\infty}\sum_{j=1}^{C_k}t(T_j^{k+1},W_n),\quad 
	 \beta_{2k+1}:=\lim_{n\to\infty}\frac{1}{n}\mathbb{E}[\textnormal{tr}M_n^{2k+1}]=0.
\end{align*}
With Carleman's Lemma (Lemma B.1 and Lemma  B.3 in \cite{bai2010spectral}), in order to to show the limiting spectral distribution of $M_n$ is uniquely determined by the moments,  it suffices to show that
 for each integer $k\geq 0$, almost surely we have 
	$$  \lim_{n\to\infty}\frac{1}{n}\text{tr}M_n^k= \beta_k,\quad \text{ and }\quad 
 \liminf_{k\to\infty}\frac{1}{k}\beta_{2k}^{1/2k}<\infty. 
 $$
 The remaining of the proof is similar to proof of Theorem 2.9 in \cite{bai2010spectral},  and we include it here for completeness.
Let $G(\mathbf{i})$ be the graph induced by the closed walk $\mathbf{i}=(i_1,\dots i_{k})$. Define $A(G(\textbf{i})):=a_{i_1i_2}a_{i_2i_3}\cdots a_{i_ki_1}$. Then
\begin{align}
\mathbb E\left| \frac{1}{n}\text{tr}M_n^{k}-\frac{1}{n}\mathbb{E}[\textnormal{tr}M_n^{k}] \right  |^4=\frac 1 {n^{4+2k}}\sum_{\mathbf{i}_j,1\leq j\leq 4}\mathbb E\prod_{j=1}^4[A(G(\mathbf{i}_j))-\mathbb EA(G(\mathbf{i}_j))]\notag
\end{align}

Consider a quadruple closed walk $\mathbf{i}_j, 1\leq j\leq 4$. By independence,
for the nonzero term, the graph $  \cup_{j=1}^4 G(\mathbf{i}_j)$ has at most two connected components. Assume there are $q$ edges  in $  \cup_{j=1}^4 G(\mathbf{i}_j)$ with multiplicity $v_1,\dots,v_q$, then  $v_1+\cdots+v_q=4k$. The number of vertices in $  \cup_{j=1}^4 G(\mathbf{i}_j)$ is at most $q+2$. To make every term in the expansion of $\mathbb E \prod_{j=1}^4\left(A(G(\mathbf{i}_j))-\mathbb EA(G(\mathbf{i}_j))\right) $ nonzero, the multiplicity of each edge is at least 2, so $q\leq 2k$ and the corresponding term satisfies
\begin{align}\label{AGbound}
\mathbb E\prod_{j=1}^4[A(G(\mathbf{i}_j))-\mathbb EA(G(\mathbf{i}_j))]\leq  {C^q} (\eta_n\sqrt n)^{4k-2q}.
\end{align}

If $q=2k$, we have $v_1=\cdots=v_q=2$. Since the graph  $  \cup_{j=1}^4 G(\mathbf{i}_j)$ has at most two connected components with at most $2k+1$ vertices,  there must be a cycle in  $  \cup_{j=1}^4 G(\mathbf{i}_j)$. So  the number of such graphs is at most $n^{2k+1}$.  Therefore from \eqref{AGbound},
 \begin{align*}
\mathbb E\left| \frac{1}{n}\text{tr}M_n^{k}-\frac{1}{n}\mathbb{E}[\textnormal{tr}M_n^{k}] \right  |^4&=\frac 1 {n^{4+2k}}\sum_{\mathbf{i}_j,1\leq j\leq 4}\mathbb E\prod_{j=1}^4[A(G(\mathbf{i}_j))-\mathbb EA(G(\mathbf{i}_j))]\\
&\leq \frac{1}{n^{4+2k}}\left(C^{2k}n^{2k+1}+\sum_{q<2k} C^qn^{q+2}(\eta_n\sqrt n)^{4k-2q}\right) =o\left(\frac 1 {n^2}\right).
\end{align*}
Then by Borel-Cantelli Lemma, \begin{align} \lim_{n\to\infty}\frac{1}{n}\text{tr}M_n^k= \beta_k \quad a.s.\notag\end{align} 
Moreover, since we have
$$ 
\beta_{2k}=\lim_{n\to\infty}\sum_{j=1}^{C_k}t(T_j^{k+1},W_n)\leq C_kC^k,$$
which implies
$\displaystyle
\liminf_{k\to\infty}\frac{1}{k}\beta_{2k}^{1/2k}=0.$

(2) Since $\delta_{\Box}(W_n,W)\to 0$, by Theorem \ref{graphonconvergence},
we have $$\lim_{n\to\infty} t(T_{j}^{k+1},W_n)= t(T_{j}^{k+1},W)$$ for any rooted planar tree $T_{j}^{k+1}$ with $k\geq 1, 1\leq j\leq C_k$. Therefore for all $k\geq 0$,
\begin{align*}
	\lim_{n\to\infty}\frac{1}{n}\textnormal{tr}M_n^{2k}&=\sum_{j=1}^{C_k}t(T_j^{k+1},W),\quad 
 \lim_{n\to\infty}\frac{1}{n}\textnormal{tr}M_n^{2k+1}=0 \quad a.s.
\end{align*}  
This completes the proof.
\end{proof}

\subsection{Proof of Theorem \ref{main2}} \label{subsec:prof2}
\begin{proof}
Since
$$  \limsup_{k\to\infty}(\beta_{2k}(x))^{1/(2k+1)}\leq 2\|W\|_{\infty}^{1/2}$$ 
 for all $x\in [0,1]$, we have for  $|z|>2\|W\|_{\infty}^{1/2}$,  $\displaystyle a(z,x)=\sum_{k=0}^{\infty}\frac{\beta_{2k}(x)}{z^{2k+1}}$ converges.  Note that
\begin{align*}
\int_{0}^{1}\beta_{2k}(x)dx&=\sum_{j=1}^{C_k}\int_{0}^{1} t_x(T_j^{k+1},W) dx=	\sum_{j=1}^{C_k}t(T_j^{k+1},W)=\beta_{2k},
\end{align*}
which implies for $|z|>2\|W\|_{\infty}^{1/2}$,
$\displaystyle
 s(z)=\sum_{k=0}^{\infty} \frac{\beta_{2k}}{z^{2k+1}}=\int_{0}^1 a(z,x) dx.	
$

Next we show \eqref{QVE2} holds for $|z|>2\|W\|_{\infty}^{1/2}$, which is equivalent to show
\begin{align}\label{qve}
a(z,x)\int_{0}^{1}W(x,y)a(z,y)dy=za(z,x)-1, \quad \forall x\in [0,1].	
\end{align}

We order the vertices in each rooted planar tree $T_{j}^{k+1}$ from $1$ to $k+1$ by depth-first search order (the root for each $T_{j}^{k+1}$ is always denoted  by $1$). Define a function
\begin{align*}
f_{j,k}(x_1,x_2,\dots, x_{k+1})=:\prod_{uv\in E(T_{j}^{k+1})}W(x_u,x_v).
\end{align*}
Now we expand $a(z,x)$ as follows
\begin{align*}
a(z,x)=&\sum_{k=0}^{\infty}\frac{1}{z^{2k+1}}\sum_{j=1}^{C_k} t_x (T_j^{k+1}, W)=	\sum_{k=0}^{\infty}\frac{1}{z^{2k+1}}\sum_{j=1}^{C_k} \int_{[0,1]^{k}}f_{j,k}(x,x_2,\dots, x_{k+1})\prod_{i=2}^{k+1}dx_i. 
\end{align*}
Then we can write $\displaystyle \int_{0}^{1} W(x,y)a(z,y)dy$ as
\begin{align}
\sum_{k=0}^{\infty}\frac{1}{z^{2k+1}}	\sum_{j=1}^{C_k} \int_{[0,1]^{k+1}} W(x,y)f_{j,k}(y,x_2,\dots, x_{k+1})dy\prod_{i=2}^{k+1} dx_i.
\label{38}
\end{align} 

Denote \begin{align}
B_{j,k}(x):=\int_{[0,1]^{k+1}} W(x,y)f_{j,k}(y,x_2,\dots, x_{k+1})dy\prod_{i=2}^{k+1} dx_i.\notag
\end{align}
Let $T_j^{k+1*}$ be the rooted  planar tree $T_{j}^{k+1}$ with a new edge attached to the root and the new vertex ordered $k+2$ (See Figure \ref{newlab}). Let $t_x(T_{j}^{k+1*},W) $ be the homomorphism density from partially labeled graph $T_{j}^{k+1*}$ to $W$ with the new vertex labeled $x$.
\begin{figure}
\includegraphics[width= 2.5 cm]{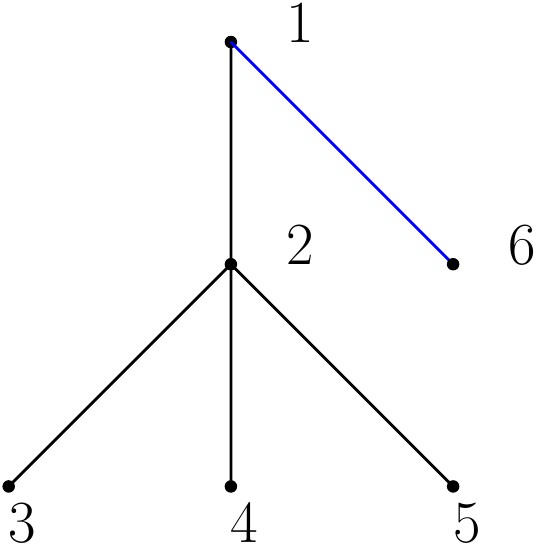}
\caption{A rooted planar tree with a new edge attached with a new vertex $6$}
\label{newlab}	
\end{figure}
With this notation, $B_{j,k}(x_{k+2})$ can be written as 
\begin{align}
&\int_{[0,1]^{k+1}} W(x_{k+2},x_1)f_{j,k}(x_1,x_2\dots, x_{k+1})\prod_{i=1}^{k+1} dx_i  \nonumber\\
=&\int_{[0,1]^{k+1}}\prod_{uv\in E(T_{j}^{k+1 *})}W(x_u,x_v) \prod_{i=1}^{k+1}dx_i
= t_{x_{k+2}}(T_{j}^{k+1*},W).\label{attach1}
\end{align}
So  \eqref{38} and \eqref{attach1} implies
$\displaystyle 
\int_{0}^{1} W(x,y)a(z,y)dy=	\sum_{k=0}^{\infty}\frac{1}{z^{2k+1}}\sum_{j=1}^{C_k} t_x(T_{j}^{k+1*},W).
$

Therefore
\begin{align}
a(z,x)\int_{0}^{1}W(x,y)a(z,y)dy 
=&	\left(\sum_{k=0}^{\infty}\frac{1}{z^{2k+1}}\sum_{i=1}^{C_k} t_x (T_i^{k+1}, W)\right)\left(\sum_{l=0}^{\infty}\frac{1}{z^{2l+1}}\sum_{j=1}^{C_l} t_x(T_{j}^{l+1*},W)\right)\notag\\
=&\sum_{k=0}^{\infty}\sum_{l=0}^{\infty}\frac{1}{z^{2(k+l)+2}}\sum_{i=1}^{C_k}\sum_{j=1}^{C_l}t_x(T_{i}^{k+1},W)t_x (T_j^{l+1*}, W)\label{startree}.
\end{align}

Let $\{T_{i,j}^{k+l+2}, 1\leq i\leq C_k, 1\leq j\leq C_l\} $ be all rooted planar trees with $k+l+2$ vertices generated by combining $T_i^{k+1}$ and $T_{j}^{l+1*}$ in the following way.
\begin{enumerate}
	\item First of all, by attaching the new labeled vertex of $T_j^{l+1*}$ to the root of $T_{i}^{k+1}$,  we get a new tree $T$ of $k+l+2$ vertices. 
	\item Choose the  root of $T$ to be the root of $T_{i}^{k+1}$. Order all vertices coming from $T_{i}^{k+1}$ with $1,2,\dots ,k+1$  and order vertices coming from $T_{j}^{l+1}$ with $k+2,k+3,\dots ,k+l+2$ both in depth-first search order. Then $T$ becomes a  rooted planar tree $T_{i,j}^{k+l+2}$ of $k+l+2$ vertices (See Figure \ref{newcom}).
\end{enumerate}

\begin{figure}
\includegraphics[width=9 cm]{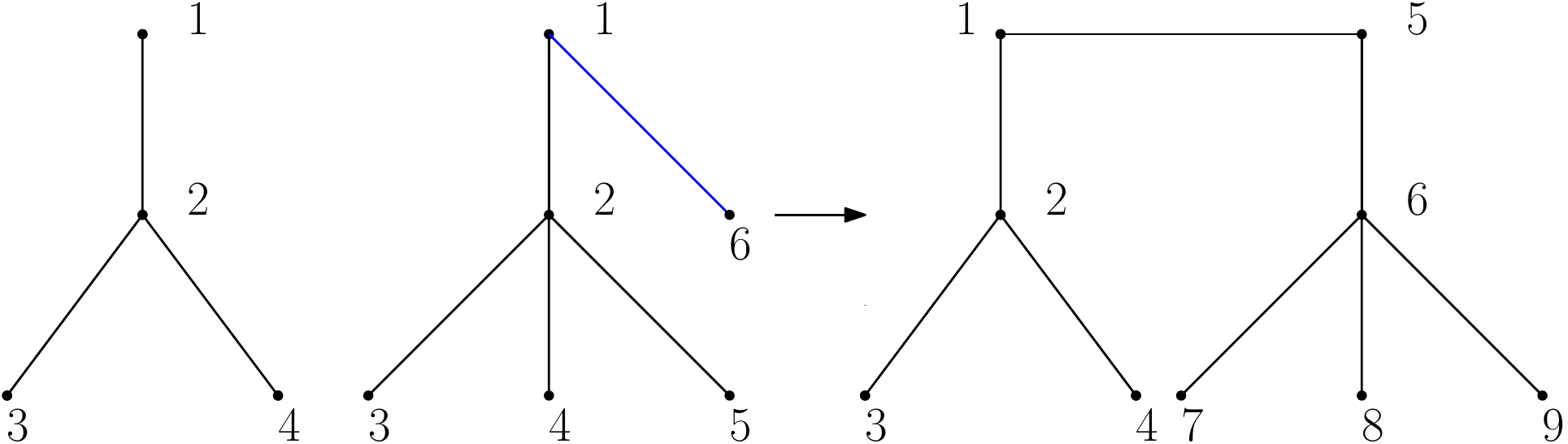}
\caption{Combining $T_{i}^{k+1}$ with $T_{j}^{l+1*}$ yields a new rooted planar tree of $k+l+2$ vertices.}
\label{newcom}	
\end{figure}
Let $t_x(T_{i,j}^{k+l+2},W)$ be the homomorphism density from partially labeled tree $T_{i,j}^{k+l+2}$ to $W$ with the root labeled $x$.  Using our notation, we have 
$$\displaystyle
	t_x(T_{i}^{k+1},W)t_x (T_j^{l+1*}, W)=t_x(T_{i,j}^{k+l+2},W)
.$$ Now let $s=k+l+1$, then \eqref{startree} can be written as 
\begin{align}
	\sum_{s=1}^{\infty}\frac{1}{z^{2s}}\sum_{\substack{ k+l+1=s\\k,l\geq 0}}\sum_{i=1}^{C_k}\sum_{j=1}^{C_l}t_x(T_{i,j}^{s+1},W).\label{catalan}
	\end{align}
Since all rooted planar trees in  the set $\{T_{i,j}^{s+1}$ $1\leq i\leq C_l, 1\leq j\leq C_k\}$ are different, from the Catalan number recurrence, there are 
$$\displaystyle \sum_{\substack{ k+l=s-1\\k,l\geq 0}}C_kC_l=\sum_{k=0}^{s-1}C_kC_{s-1-k}=C_{s}$$ 
many, which implies 	$\{T_{i,j}^{s+1}$ $1\leq i\leq C_l, 1\leq j\leq C_k\}$ are all rooted planar trees of $s+1$ vertices. Now \eqref{catalan} can be written as 
	\begin{align*}
	\sum_{s=1}^{\infty}\frac{1}{z^{2s}}\sum_{i=1}^{C_s} t_x(T_i^{s+1},W) 
	=za(z,x)-1. 
\end{align*}
 Therefore \eqref{qve}
 holds for $|z|>2\|W\|_{\infty}^{1/2}$. Since \eqref{qve} has a unique analytic solution  on $\mathbb C^+$ (see Theorem 2.1 in \cite{ajanki2015quadratic}), by analytic continuation, $a(z,x)$ has a unique extension on $\mathbb C^+\times [0,1]$ such that \eqref{qve} holds for all $z\in \mathbb C^+$. This completes the proof.
 \end{proof}
\section{Generalized Wigner Matrices} \label{generalized}

 The semicircle law for generalized Wigner matrices whose variance profile is doubly stochastic and comes from discretizing a function with zero-measure discontinuities  was proved in  \cite{nica2002operator,anderson2006clt}. The local semicircle law and universality of generalized Wigner matrices have been studied in \cite{erdHos2012bulk,erdHos2012rigidity} with a lower bound on the variance profile and conditions on the distributions of entries. 
 With Theorem \ref{main}, we can have a quick proof  of the semicircle law for generalized Wigner matrices under Lindeberg's condition. Compared to \cite{nica2002operator,anderson2006clt}, where the $L^{\infty}$-convergence of the variance profile is assumed,  we don't even need to assume the variance profile converges  under the cut metric. We will only need the weaker condition: the convergence of  $t(T,W_n)$ for any finite tree $T$.  In this section, we will show that the condition in Theorem \ref{main}, the convergence of tree integrals, is indeed a weaker condition than the convergence of the variance profile under the cut metric. Below we provide two examples where assumptions in \cite{anderson2006clt,shlyakhtenko1996random} fail, but our Theorem \ref{main} holds.

 We make the following assumptions for our \textit{generalized Wigner matrices}.  Let $A_n$ be a random Hermitian matrix such that entries are independent up to symmetry, and satisfies the following conditions:
\begin{enumerate}
	\item $\mathbb E[a_{ij}]=0,\mathbb E\left[|a_{ij}|^2\right]=s_{ij}$,\label{eq:condd1}
	\item $\displaystyle \frac{1}{n}\sum_{j=1}^n s_{ij}=1+o(1)$ for all $1\leq i\leq n$. \label{eq:cond2}
\item for any constant $\eta>0$,
$\displaystyle \lim_{n\to\infty} \frac{1}{n^2}\sum_{1\leq,i,j\leq n}\mathbb E\left[	|a_{ij}|^2\mathbf{1}(|a_{ij}|\geq \eta\sqrt n)\right]=0.$
\item 
$  \sup_{ij}s_{ij}\leq C $ for a constant $C>0$. \label{eq:condd4}
\end{enumerate}

We use our general formula in Theorem \ref{main} to get the semicircle law. An important observation is, when the variance profile is almost stochastic, the homomorphism densities in Theorem \ref{main} are easy to compute, as shown in the following lemma. The main idea is that we can start computing the homomorphism density integral from leaves on the tree.
\begin{lemma}\label{leaves}
Let $\{W_n\}_{n\geq 1}$ be any sequence of graphons such that $0\leq W_n(x,y)\leq C$ almost everywhere for some constant $C>0$. If for  $x\in [0,1]$ almost everywhere, $$\displaystyle  \lim_{n\to\infty}\int_0^1 W_n(x,y) dy=1,$$   then $\displaystyle \lim_{n\to\infty} t(T,W_n)=1$ for any finite tree $T$.
\end{lemma}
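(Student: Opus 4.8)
The natural approach is induction on the number of vertices of $T$ (equivalently, on the number of edges), repeatedly stripping a leaf and using the hypothesis that the "degree integral" $\int_0^1 W_n(x,y)\,dy$ tends to $1$. For the base case, a tree with one vertex has no edges, so $t(T,W_n)=\int_0^1 dx_1=1$ trivially; a tree with two vertices (one edge) gives $t(T,W_n)=\int_{[0,1]^2}W_n(x_1,x_2)\,dx_1dx_2 = \int_0^1\big(\int_0^1 W_n(x_1,x_2)\,dx_2\big)dx_1$, which converges to $1$ by the hypothesis together with the bound $0\le W_n\le C$ and dominated convergence.

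\textbf{Inductive step.} Suppose $T$ has $m+1\ge 2$ vertices and the claim holds for all trees with at most $m$ vertices. Pick a leaf $v$ of $T$ with unique neighbor $u$, and let $T'=T\setminus\{v\}$, a tree on $m$ vertices. Writing the homomorphism density and integrating out the variable $x_v$ attached to the leaf first,
\begin{align*}
t(T,W_n) &= \int_{[0,1]^{V(T')}}\Big(\prod_{ab\in E(T')}W_n(x_a,x_b)\Big)\Big(\int_0^1 W_n(x_u,x_v)\,dx_v\Big)\prod_{a\in V(T')}dx_a\\
&= \int_{[0,1]^{V(T')}}\Big(\prod_{ab\in E(T')}W_n(x_a,x_b)\Big)\,g_n(x_u)\prod_{a\in V(T')}dx_a,
\end{align*}
where $g_n(x):=\int_0^1 W_n(x,y)\,dy$ satisfies $0\le g_n\le C$ and $g_n(x)\to 1$ for a.e.\ $x$. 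The difference between this and $t(T',W_n)$ is
\begin{align*}
\big|t(T,W_n)-t(T',W_n)\big| \le \int_{[0,1]^{V(T')}}\Big(\prod_{ab\in E(T')}W_n(x_a,x_b)\Big)\,\big|g_n(x_u)-1\big|\prod_{a\in V(T')}dx_a.
\end{align*}
Bounding all factors $W_n$ except those incident to $x_u$ by $C$, and performing the integrals over the variables not equal to $x_u$, one is left with an integral of the form $\int_0^1 h_n(x_u)\,|g_n(x_u)-1|\,dx_u$ with $0\le h_n\le C^{m}$ (a constant power of $C$); since $|g_n(x_u)-1|\to 0$ a.e.\ and is bounded by $C+1$, dominated convergence gives that this tends to $0$. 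Hence $t(T,W_n)-t(T',W_n)\to 0$, and by the inductive hypothesis $t(T',W_n)\to 1$, so $t(T,W_n)\to 1$.

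\textbf{Main obstacle.} The only real care needed is the domination step: one must check that after integrating out all variables except $x_u$, the remaining coefficient of $|g_n(x_u)-1|$ is uniformly bounded, which follows because every surviving $W_n$ factor is bounded by $C$ and there are a fixed number of them. A slightly cleaner alternative is to prove directly by induction that $t(T,W_n)\to 1$ together with the a.e.\ (or $L^1$) statement that the partially-labeled density $t_{x}(T,W_n)$, where $x$ labels a chosen vertex, converges to $1$; stripping a leaf not equal to the labeled vertex reduces the edge count while preserving the inductive structure, and the boundedness $0\le W_n\le C$ makes all interchanges of limit and integral legitimate via dominated convergence. Either way the argument is short; the boundedness hypothesis on $W_n$ is exactly what makes the leaf-stripping uniform.
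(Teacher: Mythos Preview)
Your proposal is correct and follows essentially the same argument as the paper: induct on the number of vertices, strip a leaf to reduce $T$ to $T'$, and use the uniform bound $0\le W_n\le C$ together with dominated convergence to show $|t(T,W_n)-t(T',W_n)|\to 0$. The paper's write-up is slightly terser in the domination step (it simply invokes the Dominated Convergence Theorem on the full integral rather than explicitly bounding the non-$x_u$ factors first), but the idea is identical.
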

\begin{proof}
We induct on the number of vertices of a tree. Let $k=|V|$. 
For $k=2$, by Dominated Convergence Theorem, \begin{align}\label{eq:k=1}
 	\lim_{n\to\infty} t(T,W_n)=\int_{0}^1 W_n(x,y) dx dy=1.
 \end{align}

Assume for any trees with $k-1$ vertices the statement holds. For any tree $T$ with $k$ vertices, 
we order the vertices in $T$ by depth-first search. Then the vertex with label $k$  is a leaf. Note that
\begin{align*}
t(T,W_n)&=\int_{[0,1]^{k}} \prod_{ij\in E} W_n(x_i,x_j) dx_1\dots dx_{k}\notag\\&=\int_{[0,1]^{k}}	W_n(x_{k-1},x_{k})\prod_{ij\in E\setminus \{k-1,k\}} W_n(x_i,x_j) dx_1\dots dx_{k} \notag\\
&=\int_{[0,1]^{k-1}}	\left(\int_{[0,1]}W_n(x_{k-1},x_{k})dx_k\right)\prod_{ij\in E\setminus \{k-1,k\}} W_n(x_i,x_j) dx_1\dots dx_{k-1} 
\end{align*}
Let $T'$ be the tree $T$ with the edge $\{k-1,k\}$ removed, then we have 
\begin{align*}
t(T',W_n)&=\int_{[0,1]^{k-1}}\prod_{ij\in E\setminus \{k-1,k\}} W_n(x_i,x_j) dx_1\dots dx_{k-1},\\
 t(T,W_n)-t(T',W_n) &=\int_{[0,1]^{k-1}}	\left(\int_{[0,1]}W_n(x_{k-1},x_{k})dx_k-1\right)\prod_{ij\in E\setminus \{k-1,k\}} W_n(x_i,x_j) dx_1\dots dx_{k-1}. 
\end{align*}
By Dominated Convergence Theorem and \eqref{eq:k=1} we obtain
$$\lim_{n\to \infty}| t(T,W_n)-t(T',W_n)|=0.$$ 
Moreover, by our assumption of the induction,
$\displaystyle \lim_{n\to \infty}t(T',W_n)=1, $
therefore $\displaystyle \lim_{n\to\infty} t(T,W_n)= 1$. This completes the proof.
\end{proof}

Now we can give a quick proof of the semicircle law for generalized Wigner matrices in the following theorem, which is a quick consequence of Lemma \ref{leaves} and Theorem \ref{main}.  \begin{theorem}\label{gwm}
Let $A_n$ be a generalized Wigner matrix with assumptions above.
The limiting spectral distribution of $M_n:=\frac{A_n}{\sqrt n}$ converges weakly almost surely to the semicircle law.
\end{theorem}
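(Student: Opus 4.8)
The plan is to verify that a generalized Wigner matrix $A_n$ satisfies the hypotheses of Theorem \ref{main}(1), and then to show that the tree homomorphism densities $t(T,W_n)$ converge to $1$ for every finite tree $T$, so that the resulting limiting moments are exactly the Catalan numbers---the moments of the semicircle law.

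First I would check the three conditions in the statement of Theorem \ref{main} for $A_n$: condition (1) is exactly assumption \eqref{eq:condd1}; condition (2), Lindeberg's condition \eqref{lind}, is exactly assumption (3) in the generalized Wigner setup; and condition (3), $\sup_{ij} s_{ij}\le C$, is exactly assumption \eqref{eq:condd4}. So $A_n$ is a general Wigner-type matrix in the sense of Section \ref{wignertype}, and Theorem \ref{main}(1) applies once we know the tree densities converge.

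Next, I would apply Lemma \ref{leaves} to $W_n$, the graphon representation of $S_n$. Its hypotheses are: $0\le W_n\le C$ almost everywhere, which follows from $0\le s_{ij}\le C$ (the entries $s_{ij}=\mathbb E|a_{ij}|^2$ are nonnegative, and bounded by assumption \eqref{eq:condd4}); and $\int_0^1 W_n(x,y)\,dy\to 1$ for almost every $x$. The latter is precisely assumption \eqref{eq:cond2}: for $x\in I_i$ we have $\int_0^1 W_n(x,y)\,dy=\frac1n\sum_{j=1}^n s_{ij}=1+o(1)$, where the $o(1)$ is uniform in $i$ by \eqref{eq:cond2}, hence the convergence holds everywhere, a fortiori almost everywhere. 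Lemma \ref{leaves} then gives $\lim_{n\to\infty} t(T,W_n)=1$ for every finite tree $T$.

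Finally, I would feed this into Theorem \ref{main}(1). The convergence of $t(T,W_n)$ for all finite trees $T$ is exactly the hypothesis of part (1), so the ESD of $M_n$ converges almost surely to a probability measure $\mu$ with
\[
\int x^{2k}\,d\mu=\sum_{j=1}^{C_k}\lim_{n\to\infty} t(T_j^{k+1},W_n)=\sum_{j=1}^{C_k}1=C_k,\qquad \int x^{2k+1}\,d\mu=0.
\]
Since the Catalan numbers $C_k$ are the even moments of the semicircle law (and all odd moments vanish), and the semicircle law is compactly supported hence determined by its moments (Carleman's condition, as invoked in the proof of Theorem \ref{main}), we conclude $\mu$ is the semicircle law, and the almost sure convergence is weak convergence. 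I do not anticipate a serious obstacle: the only mild point worth stating carefully is that the $o(1)$ in \eqref{eq:cond2} is uniform over $i$, which is needed so that the almost-everywhere hypothesis of Lemma \ref{leaves} is genuinely met; everything else is a direct citation of Theorem \ref{main} and Lemma \ref{leaves}.
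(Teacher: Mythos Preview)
Your proposal is correct and follows essentially the same route as the paper: verify the hypotheses of Lemma \ref{leaves} via assumption \eqref{eq:cond2}, deduce $t(T,W_n)\to 1$ for all finite trees, then invoke Theorem \ref{main}(1) to identify the limiting moments as the Catalan numbers and conclude the semicircle law by moment determinacy. Your remark that the $o(1)$ in \eqref{eq:cond2} should be uniform in $i$ is a fair point that the paper leaves implicit.
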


\begin{proof} 	Let $W_n$ be the graphon representation of the variance profile for $A_n$. From Condition \eqref{eq:cond2}, we have \[\displaystyle \lim_{n\to\infty} \int_{[0,1]} W_n(x,y) dy=1\] for $x\in [0,1]$ almost everywhere. Then by Lemma \ref{leaves}, $\displaystyle \lim_{n\to\infty} t(T,W_n)=1$  for any finite tree $T$.

By part (1) in Theorem \ref{main}, the empirical spectral distribution of $M_n$ converges almost surely to a probability measure $\mu$ such that  for all $k\geq 0$.
\begin{align}\label{eq:deghomo}
\int x^{2k}d\mu=C_k, \quad \int x^{2k+1} d\mu=0.	
\end{align}
It's known that the semicircle law is uniquely determined by its moments, therefore the limiting spectral distribution for $M_n$ is the semicircle law.
\end{proof}

Theorem \ref{gwm} can be applied to study the spectrum of inhomogeneous random graphs with roughly equal expected degrees. This is a sparse random graph model where no limiting variance profile is assumed, so the theorems in \cite{shlyakhtenko1996random,anderson2006clt} do not apply here. Consider the  inhomogeneous Erd\H{o}s-R\'{e}nyi model $\mathcal G(n, (p_{ij}))$ with adjacency matrix $A_n$, where edges exist independently with given probabilities $p_{ij}$ such that $p_{ij}=p_{ji}$. Assume 
\begin{align}\label{eq:inhomo1}
	\sum_{i=1}^n p_{ij}=(1+o(1))n\alpha  \quad \text {for all } j\in [n]
\end{align} with some $\alpha\to 0,  \alpha=\omega\left(\frac{1}{n}\right)$, and 
\begin{align}\label{eq:inhomo2}
\displaystyle \max_{ij}p_{ij}\leq C\alpha \quad \text{ for some constant } C\geq 1.\end{align}
\begin{cor} Under the assumptions \eqref{eq:inhomo1} and \eqref{eq:inhomo2}, the empirical spectral distribution of the scaled adjacency matrix $\frac{A_n}{\sqrt{n\alpha}}$ converges almost surely to the semicircle law.	
\end{cor}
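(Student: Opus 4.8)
The plan is to deduce the corollary from Theorem \ref{gwm} by centering the adjacency matrix and showing that the mean part is negligible. Write $A_n = B_n + \mathbb{E}A_n$, where $B_n := A_n - \mathbb{E}A_n$ has entries $b_{ij} = a_{ij}-p_{ij}$ for $i\neq j$ and $b_{ii}=0$; these are independent up to symmetry and centered. Set $\tilde A_n := \alpha^{-1/2}B_n$, so that $\tilde A_n/\sqrt n = B_n/\sqrt{n\alpha}$. The corollary will follow from two things: (a) $\tilde A_n$ is a generalized Wigner matrix, so $F^{\tilde A_n/\sqrt n}$ converges weakly almost surely to the semicircle law by Theorem \ref{gwm}; and (b) $F^{A_n/\sqrt{n\alpha}}$ has the same almost-sure limit as $F^{B_n/\sqrt{n\alpha}}$.

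For (a), I would check the four defining conditions of a generalized Wigner matrix (Section \ref{generalized}) for $\tilde A_n$, whose variance profile is $s_{ij}=\alpha^{-1}p_{ij}(1-p_{ij})$ (with $s_{ii}=0$). The uniform bound follows from \eqref{eq:inhomo2}, since $s_{ij}\le p_{ij}/\alpha\le C$. For the near-stochasticity condition \eqref{eq:cond2}, I would combine $\sum_{j}p_{ij}^2\le C\alpha\sum_j p_{ij}=O(n\alpha^2)$ with \eqref{eq:inhomo1} to obtain $\frac1n\sum_j s_{ij}=\frac{1}{n\alpha}\big(\sum_j p_{ij}-\sum_j p_{ij}^2\big)=1+o(1)$ uniformly in $i$, using $\alpha\to0$. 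Lindeberg's condition \eqref{lind} is automatic: $|\tilde a_{ij}|\le\alpha^{-1/2}$, and since $n\alpha=\omega(1)$ we have $\alpha^{-1/2}<\eta\sqrt n$ for all large $n$, so the truncated second moments vanish eventually. Thus Theorem \ref{gwm} applies to $\tilde A_n$.

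For (b), since $A_n/\sqrt{n\alpha}-B_n/\sqrt{n\alpha}=\mathbb{E}A_n/\sqrt{n\alpha}$ is deterministic, Lemma \ref{levy} gives
\[
L^3\big(F^{A_n/\sqrt{n\alpha}},\,F^{B_n/\sqrt{n\alpha}}\big)\le \frac{1}{n}\operatorname{tr}\!\Big[\tfrac{(\mathbb{E}A_n)(\mathbb{E}A_n)^{*}}{n\alpha}\Big]=\frac{1}{n^2\alpha}\sum_{i,j}p_{ij}^2\le \frac{C}{n^2}\sum_{i,j}p_{ij}=O(\alpha)\to 0,
\]
by \eqref{eq:inhomo2} and then \eqref{eq:inhomo1}. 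As this bound is deterministic and $F^{B_n/\sqrt{n\alpha}}$ converges almost surely to the semicircle law, the same holds for $F^{A_n/\sqrt{n\alpha}}$, which is the claim.

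The main obstacle is step (b): the mean matrix $\mathbb{E}A_n=(p_{ij})$ has operator norm of order $n\alpha\to\infty$ and is generically full rank, so it cannot be removed via a norm estimate or the rank inequality (Lemma \ref{rankinequality}). What saves the argument is that after scaling by $(n\alpha)^{-1/2}$ it is negligible in \emph{Frobenius} norm, i.e.\ $\frac{1}{n^2\alpha}\|\mathbb{E}A_n\|_F^2=O(\alpha)\to0$ --- precisely the quantity controlled by Lemma \ref{levy}. The rest, in particular verifying that the approximate degree-regularity \eqref{eq:inhomo1} survives subtracting the $O(n\alpha^2)$ correction from $\sum_j p_{ij}^2$, is routine once one uses $\alpha\to0$ and $n\alpha\to\infty$.
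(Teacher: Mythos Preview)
Your proposal is correct and follows essentially the same approach as the paper: center the adjacency matrix, verify that $\alpha^{-1/2}(A_n-\mathbb{E}A_n)$ is a generalized Wigner matrix so that Theorem \ref{gwm} applies, and then remove the mean via the L\'evy distance bound (Lemma \ref{levy}) using $\frac{1}{n^2\alpha}\sum_{i,j}p_{ij}^2=O(\alpha)\to 0$. Your write-up is in fact more explicit than the paper's in verifying the near-stochasticity and Lindeberg conditions, but the structure and key steps are identical.
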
 

\begin{proof}
	Consider the matrix $M_n=\frac{A_n-\mathbb EA_n}{\sqrt{\alpha}}$. Then by \eqref{eq:inhomo1} and \eqref{eq:inhomo2}, one can check that $M_n$ satisfies the assumptions \eqref{eq:condd1}-\eqref{eq:condd4} above for the generalized Wigner matrices. By Theorem \ref{gwm}, the empirical spectral distribution of $\frac{A_n-\mathbb EA_n}{\sqrt{n\alpha}}$ converges to the semicircle law almost surely. By Lemma \ref{levy}, we have almost surely
\begin{align} 
L^3\left(F^{\frac{A_n}{\sqrt {n \alpha }}}, F^{\frac{A_n-\mathbb EA_n}{\sqrt {n\alpha} }}\right)&\leq \frac{1}{n} \textnormal{tr}\left[\left(\frac{\mathbb EA_n}{\sqrt{n\alpha }}\right)^2 \right] =\frac{1}{n^2\alpha }	\sum_{i,j=1}^n  (\mathbb Ea_{ij})^2 \notag\\
&=\frac{\sum_{i,j=1}^n p_{ij}^2}{n^2\alpha}\leq \frac{n^2C^2\alpha^2 }{n^2\alpha}=C^2\alpha=o(1),\label{eq:sparseperturb}
\end{align}
where the last line of inequalities are from \eqref{eq:inhomo2}. Then $\frac{A_n}{\sqrt{n\alpha}}$ and $\frac{A_n-\mathbb EA_n}{\sqrt{n\alpha}}$ have the same limiting spectral distribution almost surely. This completes the proof.
\end{proof}

\section{Sparse $W$-random Graphs}\label{wrandomgraph}

Given a graphon $W:[0,1]^2\to [0,1]$, following the definitions in \cite{borgs2014p}, one can generate a sequence of sparse random graphs $G_n$ in the following way. We choose a sparsity parameter $\rho_n$ such that \[\sup_n\rho_n<1 \text{ with } \rho_n\to 0 \text{ and } n\rho_n\to\infty.\]
Let $x_1,\dots, x_n$ be i.i.d. chosen uniformly from $[0,1]$. For a graph $G_n$, $i$ and $j$ are connected with probability $\rho_n W(x_i,x_j)$ independently for all $i\not=j$. We define $G_n$ to be a \textit{sparse $W$-random graph}, and the sequence $\{G_n\}$ is denoted by $ \mathcal G(n,W,\rho_n)$. Note that we use the same i.i.d. sequence $x_1,\dots,x_n$ when constructing $G_n$ for different values of $n$ without resampling the $x_i$'s. We determine the limiting spectral distributions  for the adjacency matrices of sparse $W$-random graphs in the following theorem. This is a novel application of our theorem that cannot be covered by any previous results, since $W$ can be any  bounded measurable function.

\begin{theorem}\label{wrandom}
Let $G(n,W,\rho_n)$ be a sequence of sparse $W$-random graphs with adjacency matrices $\{A_n\}_{n\geq 1}$. The limiting spectral distribution of $\frac{A_n}{\sqrt {n\rho_n}}$ converges almost surely to a probability measure $\mu$ such that
\begin{align*}
\int_{\mathbb R} x^{2k} d\mu&=	\sum_{j=1}^{C_k}t(T_j^{k+1},W),\quad 
\int_{\mathbb R} x^{2k+1} d\mu=0.
\end{align*}
Moreover, its Stieltjes transform $s(z)$ satisfies the following equation:
\begin{align*}
s(z)&=\int_{0}^{1} a(z,x) dx, \quad 
a(z,x)^{-1}=z-\int_{0}^{1} W(x,y)a(z,y) dy, \quad \forall x\in [0,1].	  
\end{align*}
\end{theorem}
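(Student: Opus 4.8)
The plan is to deduce Theorem \ref{wrandom} from Theorem \ref{main}(2) and Theorem \ref{main2} by conditioning on the latent coordinates $\{x_i\}$ and subtracting the conditional mean. Work on the probability space carrying the i.i.d.\ uniform sequence $\{x_i\}_{i\ge 1}$ together with the independent edge indicators, and condition on $\mathbf x=(x_i)_{i\ge 1}$. Given $\mathbf x$, the entries $a_{ij}$ of $A_n$ with $i<j$ are independent $\mathrm{Bernoulli}(\rho_n W(x_i,x_j))$ and $a_{ii}=0$. Set $\tilde A_n:=A_n-\mathbb E[A_n\mid\mathbf x]$ and $B_n:=\rho_n^{-1/2}\tilde A_n$, so that $M_n:=B_n/\sqrt n=(A_n-\mathbb E[A_n\mid\mathbf x])/\sqrt{n\rho_n}$. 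Conditionally on $\mathbf x$, $B_n$ is Hermitian with independent centered entries on and above the diagonal; its variance profile $S_n$ has entries $s_{ij}=W(x_i,x_j)\bigl(1-\rho_n W(x_i,x_j)\bigr)$ for $i\ne j$ (and $0$ on the diagonal, negligible below), so $\sup_{ij}s_{ij}\le 1$. Since $a_{ij}\in\{0,1\}$, the entries of $B_n$ are bounded by $\rho_n^{-1/2}$ in absolute value, and $n\rho_n\to\infty$ forces $\rho_n^{-1/2}=o(\sqrt n)$; hence for every fixed $\eta>0$ the truncation indicator in \eqref{lind} vanishes identically for all large $n$, and Lindeberg's condition holds. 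Thus, conditionally on $\mathbf x$, $B_n$ is a general Wigner-type matrix in the sense of Section \ref{wignertype}.

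The main point is to show that, for almost every $\mathbf x$, the graphon representation $W_n$ of $S_n$ satisfies $\delta_\Box(W_n,W)\to 0$. I would write $W_n=W_n^{(0)}-\rho_n W_n^{(1)}$ up to an error supported on the diagonal blocks, of $L^1$-mass $O(1/n)$, where $W_n^{(0)}$ and $W_n^{(1)}$ are the graphon representations of the matrices $(W(x_i,x_j))_{i,j}$ and $(W(x_i,x_j)^2)_{i,j}$, both $[0,1]$-valued. Since $\|\rho_n W_n^{(1)}\|_\infty\le\rho_n\to 0$ and $\delta_\Box\le d_\Box\le\|\cdot\|_\infty$, it suffices to prove $\delta_\Box(W_n^{(0)},W)\to 0$ almost surely. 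But $W_n^{(0)}$ is exactly the random graphon obtained by sampling from $W$: for each fixed finite simple graph $F$, $t(F,W_n^{(0)})$ is, up to an $O(1/n)$ diagonal correction, a $U$-statistic in $x_1,\dots,x_n$ with mean $t(F,W)$, hence converges to $t(F,W)$ almost surely by the strong law of large numbers for $U$-statistics. Intersecting over the countably many finite simple graphs $F$, almost surely $t(F,W_n^{(0)})\to t(F,W)$ for all of them at once; since $W_n^{(0)},W\in\mathcal W_0$, Theorem \ref{graphonconvergence} then gives $\delta_\Box(W_n^{(0)},W)\to 0$, hence $\delta_\Box(W_n,W)\to 0$, almost surely. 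This is the classical sampling convergence for $W$-random objects; see \cite{lovasz2012large}.

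Granting this, fix $\mathbf x$ in the full-measure event above. Applying Theorem \ref{main}(2) and Theorem \ref{main2} to $B_n$ conditionally on $\mathbf x$, the empirical spectral distribution of $M_n=B_n/\sqrt n$ converges almost surely (over the edge randomness) to the probability measure $\mu$ with $\int x^{2k}\,d\mu=\sum_{j=1}^{C_k}t(T_j^{k+1},W)$, $\int x^{2k+1}\,d\mu=0$, and Stieltjes transform determined by \eqref{QVE1}--\eqref{QVE2}; crucially $\mu$ is deterministic and does not depend on $\mathbf x$. Integrating out $\mathbf x$ (Fubini / tower property), the empirical spectral distribution of $(A_n-\mathbb E[A_n\mid\mathbf x])/\sqrt{n\rho_n}$ converges almost surely to $\mu$. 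It remains to replace $M_n$ by $\frac{A_n}{\sqrt{n\rho_n}}$: the difference matrix $\mathbb E[A_n\mid\mathbf x]/\sqrt{n\rho_n}$ has $(i,j)$ entry $\sqrt{\rho_n/n}\,W(x_i,x_j)\mathbf 1(i\ne j)$, so Lemma \ref{levy} gives
\[
L^3\!\left(F^{A_n/\sqrt{n\rho_n}},\,F^{M_n}\right)\;\le\;\frac1n\operatorname{tr}\!\left[\Bigl(\tfrac{\mathbb E[A_n\mid\mathbf x]}{\sqrt{n\rho_n}}\Bigr)^{\!2}\right]\;=\;\frac{\rho_n}{n^2}\sum_{i\ne j}W(x_i,x_j)^2\;\le\;\rho_n\;\to\;0,
\]
so the two sequences have the same almost-sure limit $\mu$, which is Theorem \ref{wrandom}.

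I expect the only genuinely nontrivial step to be the almost-sure cut-metric convergence $\delta_\Box(W_n^{(0)},W)\to 0$. It is a standard consequence of the sampling lemmas in graphon theory, but one should note that here the coordinates $x_i$ are reused across all values of $n$ without resampling; this causes no difficulty, since the convergence invoked is an almost-sure statement about the single fixed sequence $\{x_i\}_{i\ge 1}$ and the $U$-statistic law of large numbers applies verbatim to it. The remaining ingredients—the Lindeberg verification, the vanishing of the $\rho_nW_n^{(1)}$ term and of the diagonal blocks, and the L\'evy-distance transfer from $M_n$ to $A_n/\sqrt{n\rho_n}$—are routine.
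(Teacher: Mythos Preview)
Your proof is correct and follows essentially the same route as the paper: center by the conditional mean, verify the Wigner-type hypotheses for $B_n$ conditionally on $\mathbf x$, establish $\delta_\Box(W_n,W)\to 0$ almost surely via the sampling convergence of $W_n^{(0)}$ to $W$, apply Theorems \ref{main}(2) and \ref{main2}, and then remove the centering by Lemma \ref{levy}. The only cosmetic difference is that the paper obtains the sampling convergence by quoting Theorem 4.5(a) of \cite{borgs2008convergent} directly, whereas you rederive it via the $U$-statistic SLLN combined with Theorem \ref{graphonconvergence}.
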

\begin{proof}

Let $$  B_n:=\frac{A_n-\mathbb E[A_n|x_1,\dots, x_n]}{\sqrt{\rho_n}}=(b_{ij})_{1\leq i,j\leq n}.$$ Note that $B_n$ is now a function of $x_1,\dots, x_n$. Since $n\rho_n\to\infty$ and $|b_{ij}|\leq \frac{2}{\sqrt{\rho_n}}$, 
, we have that for any constant $\eta>0$.
\begin{align}
\lim_{n\to\infty} \frac{1}{n^2}\sum_{1\leq,i,j\leq n}\mathbb E\left[	|b_{ij}|^2\mathbf{1}(|b_{ij}|\geq \eta\sqrt n)\mid x_1,\dots, x_n\right]=0,\notag\end{align} 
then the Lindeberg's condition \eqref{lind} holds for $B_n$.  Let $S_n$ be the variance profile matrix of $B_n$. Then we have $s_{ii}=0, 1\leq i\leq n$ and  for all $i\not=j$,
\begin{align}
s_{ij}=\frac{\rho_n W(x_i,x_j)(1-\rho_n W(x_i,x_j))}{\rho_n}=W(x_i,x_j)+o(1).	\notag
\end{align} 
Let  $W_n$ be the graphon representation of the matrix $S_n$ and let $\tilde{W}_n$ be the graphon of a weighted complete graph on $[n]$ with edge weights 
$W(x_i,x_j)$ for each edge $ij$. It implies that \begin{align}
W_n(x,y)=\Tilde{W}_n(x,y)+o(1),  \quad \forall (x,y)\in [0,1]^2.\notag	
\end{align} 
By Dominated Convergence Theorem, we get $\displaystyle
\lim_{n\to\infty}\delta_{\Box}(\Tilde{W}_n,W_n)= 0. 	
$ 
From Theorem 4.5 (a) in \cite{borgs2008convergent}, we have 
$\displaystyle
\lim_{n\to\infty}\delta_{\Box}(\Tilde{W}_n,W)= 0 	
$ almost surely, which implies   
$\displaystyle
\lim_{n\to\infty}\delta_{\Box}(W_n,W)= 0 	
$
almost surely.  Therefore from  Theorem \ref{main} (2),  the limiting spectral distribution of $\frac{B_n}{\sqrt n}$ exists almost surely and its moments and Stieltjes transform are given by Theorem \ref{main} and Theorem \ref{main2}. Next we show $\frac{B_n}{\sqrt n}$ and $\frac{A_n}{\sqrt{n\rho_n}}$ have the same limiting spectral distribution.

By Lemma \ref{levy}, we have almost surely
\begin{align}\label{eq:levybounds}
L^3(F^{\frac{A_n}{\sqrt {n \rho_n}}}, F^{\frac{B_n}{\sqrt n}})&\leq \frac{1}{n} \textnormal{tr}\left[\left(\frac{A_n}{\sqrt{n\rho_n}}-\frac{B_n}{\sqrt n}\right)^2 \right] =\frac{1}{n^2\rho_n}	\textnormal{tr} \left(\mathbb E[A_n| x_1,\dots, x_n]\right)^2.
\end{align}
By the way we generate our $W$-random graphs, we have for all $i\not=j$,
$$\mathbb E[(A_n)_{ij}\mid x_1,\dots, x_n]=\rho_n W(x_i,x_j).
$$
Therefore the right hand side in \eqref{eq:levybounds} is almost surely bounded by
\[\displaystyle \frac{\rho_n}{n^2}\sum_{i\not=j} W^2(x_i,x_j)\leq \rho_n=o(1),\]
which implies
$\displaystyle \lim_{n\to\infty}L^3(F^{\frac{A_n}{\sqrt {n \rho_n}}}, F^{\frac{B_n}{\sqrt n}})=0
$ almost surely. This completes the proof.
\end{proof}

\section{Random Block Matrices}\label{sec:randomblock}
Consider an $n\times n$ random Hermitian matrix $A_n$ composed of $d^2$ many rectangular blocks as follows.  We can write $A_n$ as
$  A_n:=\sum_{k,l=1}^d E_{kl}\otimes A_n^{(k,l)},	
$ where $\otimes$ denotes the Kronecker product of matrices, $E_{kl}$ are the elementary $d\times d$ matrices having $1$ at entry $(k,l)$ and $0$ otherwise. The blocks
$A_n^{(k,l)}, 1\leq k\leq l\leq d$ are of size $n_k\times n_l$ and consist of independent entries subject to symmetry. 
To summarize, we consider a \textit{random block matrix} $A_n$ with the following assumptions:
\begin{enumerate}\label{enu}
\item   $\displaystyle \lim_{n\to\infty}\frac{n_k}{n}= \alpha_k\in [0,1], 1\leq k\leq d$.\label{eq:blockassumption1}
\item $\mathbb Ea_{ij}=0, 1\leq i,j\leq n$, $\mathbb E|a_{ij}|^2=s_{kl}$ if $a_{ij}$ is in the $(k,l)$-th block. All entries are independent subject to symmetry.
\item  $  \sup_{kl}s_{kl}<C$ for some constant $C>0$. 
\item
 $\displaystyle \lim_{n\to\infty}\frac{1}{n^2}\sum_{ij} \mathbb E\left[(|a_{ij}|^2\mathbf{1}(|a_{ij}|\geq \eta\sqrt n)\right]=0, $ for any positive constant $\eta$.\label{eq:blockassumption4}
\end{enumerate}
For random block matrices with fixed $d$, the limiting spectral distributions are determined in \cite{far2008slow,ding2014spectral,avrachenkov2015spectral} under various assumptions. However, explicit moment formulas were not known. With Theorem \ref{main},   we can  compute the moments of the limiting spectral distribution. 
Let $W_n$ be the graphon  of the variance profile for $A_n$. Let 
$  \beta_0=0, \beta_i=\sum_{j=1}^i \alpha_j, i\geq 1.$ Then we can define the graphon $W$ such that
\begin{align}
W(x,y)=s_{kl},  \quad \text{if }(x,y)\in [\beta_{k-1}, \beta_k)\times [\beta_{l-1}, \beta_l)\label{gg}. 
\end{align}
Note that $W$ is a step function defined on $[0,1]^2$. Below is a version of Theorem \ref{main}, written specifically to address this model.

\begin{theorem}
Let $A_n$ be a random block matrix satisfying the assumptions above. Let $M_n=\frac{A_n}{\sqrt n}$ and $W$ be the graphon defined in \eqref{gg}. Then the limiting spectral distribution of $M_n$ converges almost surely to a probability measure $\mu$ such that
\begin{align}
\int_{\mathbb R} x^{2k} d\mu(x)&=	\sum_{j=1}^{C_k}t(T_j^{k+1},W), \quad 
\int_{\mathbb R} x^{2k+1} d\mu(x)=0, \label{eq2}
\end{align}
and its Stieltjes transform $s(z)$ satisfies
$\displaystyle
s(z)=\sum_{k=1}^d \alpha_k a_k(z),$
where for all $1\leq k\leq d$, $$\displaystyle a_k(z)^{-1}=z-\sum_{i=1}^d 
\alpha_i s_{ik}a_i(z).$$	 
 \begin{proof}
 From the definition, we have  $W_n(x,y)\to W(x,y)$  as $n\to\infty$ for  $(x,y)\in [0,1]^2$ almost everywhere. Hence
 \begin{align*}
 \|W_n-W\|_{\Box}&=\sup_{S,T\in [0,1]} \left|\int_{S\times T} W_n(x,y)-W(x,y) dx dy\right|\\
 &\leq \int_{[0,1]^2}|W_n(x,y)-W(x,y)| dxdy.
 \end{align*}
 Since $|W_n(x,y)|\leq C$, by the Dominated Convergence Theorem, we have
  $\displaystyle \|W_n-W\|_{\Box}\to 0$ as $n\to\infty$. \eqref{eq2} follow from Theorem \ref{main}.	The existence and uniqueness of $a_k(z), 1\leq k\leq d$ follows from Theorem 2.1 in \cite{ajanki2015quadratic}.
 \end{proof}
\end{theorem}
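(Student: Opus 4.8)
The plan is to realize the random block model as a special instance of Theorem~\ref{main} and Theorem~\ref{main2} by first checking cut-metric convergence of the variance graphons, and then to exploit the piecewise-constant structure of the limiting graphon $W$ to collapse the quadratic vector equations \eqref{QVE1}--\eqref{QVE2} into the finite system for $a_1(z),\dots,a_d(z)$.

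First I would show $\delta_{\Box}(W_n,W)\to 0$. The graphon $W_n$ of $S_n$ splits $[0,1]$ into $n$ equal subintervals, the first $n_1$ of which carry block $1$, the next $n_2$ carry block $2$, and so on, and on the product of a block-$k$ interval with a block-$\ell$ interval $W_n$ equals $s_{k\ell}$. Since $n_k/n\to\alpha_k$, the cumulative endpoints $\sum_{j\le k}n_j/n$ converge to $\beta_k$, so $W_n(x,y)\to W(x,y)$ for almost every $(x,y)$ (the exceptional set lying in the union of the lines $x=\beta_k$ and $y=\beta_k$, which has measure zero). Using $0\le W_n\le C$ and the Dominated Convergence Theorem, $\|W_n-W\|_{L^1([0,1]^2)}\to 0$, and since $\|\cdot\|_{\Box}\le\|\cdot\|_{L^1}$ this gives $d_{\Box}(W_n,W)\to 0$, hence $\delta_{\Box}(W_n,W)\to 0$. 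Then Theorem~\ref{main}(2) yields almost sure convergence of $F^{M_n}$ to a measure $\mu$ with $\int x^{2k}\,d\mu=\sum_{j=1}^{C_k}t(T_j^{k+1},W)$ and vanishing odd moments, which is \eqref{eq2}; and Theorem~\ref{main2} gives that the Stieltjes transform $s(z)$ satisfies \eqref{QVE1}--\eqref{QVE2} with $a(z,\cdot)$ the unique analytic solution of \eqref{QVE2} on $\mathbb C^+\times[0,1]$.

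The final step is to collapse the QVE using the block structure. I claim $a(z,x)$ is a.e.\ constant on each $[\beta_{k-1},\beta_k)$: for any measure-preserving bijection $\sigma$ of $[0,1]$ that fixes every interval $[\beta_{k-1},\beta_k)$ setwise one has $W^{\sigma}=W$ almost everywhere, so by uniqueness of the solution of \eqref{QVE2} we get $a(z,\sigma(x))=a(z,x)$ for a.e.\ $x$, and letting $\sigma$ range over all such bijections forces $a(z,\cdot)$ to be a.e.\ constant on each block; call that value $a_k(z)$. For $x\in[\beta_{k-1},\beta_k)$ we have $\int_0^1 W(x,y)a(z,y)\,dy=\sum_{i=1}^d s_{ki}\int_{\beta_{i-1}}^{\beta_i}a_i(z)\,dy=\sum_{i=1}^d\alpha_i s_{ki}a_i(z)$, so \eqref{QVE2} reduces to $a_k(z)^{-1}=z-\sum_{i=1}^d\alpha_i s_{ki}a_i(z)$, which by the symmetry $s_{ki}=s_{ik}$ is exactly the asserted system; its unique analytic solvability on $\mathbb C^+$ follows from Theorem~2.1 in \cite{ajanki2015quadratic}. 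Finally \eqref{QVE1} gives $s(z)=\int_0^1 a(z,x)\,dx=\sum_{k=1}^d(\beta_k-\beta_{k-1})a_k(z)=\sum_{k=1}^d\alpha_k a_k(z)$.

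The main obstacle I anticipate is making the last step airtight: one must confirm that the block-constant ansatz is in fact \emph{the} solution of the graphon QVE and not merely \emph{a} solution, which is what the permutation-invariance plus uniqueness argument above is designed to handle, and one must check that the resulting $d$-dimensional QVE is covered by Theorem~2.1 in \cite{ajanki2015quadratic} (it is symmetrizable, since $\alpha_k\alpha_i s_{ki}=\alpha_i\alpha_k s_{ik}$). By contrast, the cut-metric convergence and the invocation of the moment formula are routine.
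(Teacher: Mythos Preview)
Your proof is correct and the cut-metric step is identical to the paper's: pointwise a.e.\ convergence of $W_n$ to the step function $W$, then $\|\cdot\|_{\Box}\le\|\cdot\|_{L^1}$ plus dominated convergence, then invoke Theorem~\ref{main}. The only difference is in the Stieltjes part: the paper does not actually derive the finite system from the graphon QVE at all---it simply writes down the $d$-dimensional equations and cites Theorem~2.1 of \cite{ajanki2015quadratic} for existence and uniqueness---whereas you go through Theorem~\ref{main2} and then collapse \eqref{QVE1}--\eqref{QVE2} to the finite system. Your permutation-invariance argument is valid, but the route you flag in your last paragraph is cleaner and is what the paper implicitly relies on: take the unique analytic solution $(a_1,\dots,a_d)$ of the finite system, define $a(z,x):=a_k(z)$ on $[\beta_{k-1},\beta_k)$, verify by direct substitution that this satisfies \eqref{QVE2}, and conclude by uniqueness that it is \emph{the} graphon solution, so that \eqref{QVE1} yields $s(z)=\sum_k\alpha_k a_k(z)$. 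This avoids having to quantify over measure-preserving bijections.
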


Now we consider the case where the number of blocks $d$ depends on $n$ such that $d\to\infty \text{ as } n\to\infty.$ 
We partition the $n$ vertices into $d$ classes: $
[n]=V_1\cup V_2\cup\cdots \cup V_d.	
$
Let $  m_0=0, m_i=\sum_{j=1}^{i} n_j$
and $$V_i = \{m_{i-1}+1, m_{i-1}+2,
\ldots, m_{i}\}$$ for $i= 1, \ldots, d$. 
We say the class $V_i$  is \textit{small} if $  \frac{n_i}{n}\to \alpha_i=0$, and $V_i$ is \textit{big} if $  \frac{n_i}{n}\to\alpha_i>0$.

 It's not necessary that $ \sum_{i=1}^{\infty}\alpha_i=1$. For example, if $n_i\leq  \log n$ for each $i$, we have $  \frac{n_i}{n}\to 0$ for all $i=1,2,\dots, $ then $  \sum_{i=1}^{\infty}\alpha_i=0$. In such case, a limiting graphon might not be well defined for general variance profiles. However, if we make all variances for the off-diagonal blocks to be $s_0$ for some constant $s_0$, then the limiting graphon will be a constant function $s_0$ on $[0,1]^2$  since all diagonal blocks will vanish to a zero measure set in the limit. 
 With these observations, we can extend our result to the case for $d\to\infty$ and $  \sum_{i=1}^{\infty}\alpha_i\leq 1$ under more assumptions on the variance profile.

\begin{theorem}\label{blockmatrix}
 Let $A_n$ be a random block matrix with  $d\to\infty$ as $n\to\infty$ satisfying assumptions \eqref{eq:blockassumption1}-\eqref{eq:blockassumption4}, then the  empirical spectral distribution of $\frac{A_n}{\sqrt{n}}$ converges almost surely to a probability measure $\mu$  if one of the extra conditions below holds.
 \begin{enumerate}
 \item 	$  \sum_{i=1}^{\infty}\alpha_i=1$ and $\alpha_1\geq \alpha_2\geq \cdots \geq 0$, or\label{eq:case1}
 \item $  \sum_{i=1}^{\infty}\alpha_i=\alpha <1$, $\alpha_1\geq \alpha_2\geq \cdots \geq 0$; also,  for any  two small classes $V_k, V_l, k\not=l$,   $s_{kl}=s_0$ for some constant $s_0$.
 	 For any large class $V_k$ and small class $V_l$,  $s_{kl}=s_{k0}$ for some constant $s_{k0}$.\label{eq:case2}
 \end{enumerate}
 
 \end{theorem}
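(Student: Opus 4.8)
The plan is to reduce everything to Theorem \ref{main} (and, for the explicit moments, Theorem \ref{main2}) by producing in each case a limiting graphon $W$ with $\delta_{\Box}(W_n,W)\to 0$, where $W_n$ is the graphon representation of the variance profile $S_n$. Assumptions \eqref{eq:blockassumption1}--\eqref{eq:blockassumption4} already make $A_n$ a general Wigner-type matrix in the sense of Section \ref{wignertype}, so only the graphon convergence needs to be checked; establishing it in the cut metric also yields $\int x^{2k}\,d\mu=\sum_{j=1}^{C_k}t(T_j^{k+1},W)$ and the quadratic vector equations via Theorem \ref{main2}. Since permuting $A_n$ does not change its spectrum, I first reorder the blocks so that $n_1\geq n_2\geq\cdots\geq n_{d}$ for every $n$, matching the assumed ordering $\alpha_1\geq\alpha_2\geq\cdots$ of the limits of $p_i^{(n)}:=n_i/n$. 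A minor point, handled throughout by rescaling $W_n$ and $W$ by $C$, is that Theorem \ref{graphonconvergence} is stated for graphons valued in $[0,1]$ while ours are valued in $[0,C]$.

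For case \eqref{eq:case1}, put $\beta_k:=\sum_{j\leq k}\alpha_j$, $J_k:=[\beta_{k-1},\beta_k)$ (an interval of length $\alpha_k$, with $\bigcup_kJ_k=[0,1]$ up to a null set because $\sum_k\alpha_k=1$), and define $W(x,y):=s_{kl}$ for $(x,y)\in J_k\times J_l$. I would then prove $\|W_n-W\|_{L^1([0,1]^2)}\to 0$, which dominates $\delta_{\Box}(W_n,W)$. The estimate is a tail argument: given $\varepsilon>0$ choose $N$ with $\sum_{i>N}\alpha_i<\varepsilon$; the step grids $\{m_k/n\}_{k\leq N}$ of $W_n$ converge to $\{\beta_k\}_{k\leq N}$, so on the region covered by the first $N$ blocks $W_n$ equals $W$ off a set of measure tending to $0$, while the complement has measure tending to $1-\bigl(\sum_{i\leq N}\alpha_i\bigr)^2$ and there $|W_n-W|\leq 2C$; letting $n\to\infty$ and then $N\to\infty$ gives $\|W_n-W\|_{L^1}\to 0$. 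Theorem \ref{main}(2) then gives the conclusion. (Equivalently, one can argue directly that $t(T,W_n)\to t(T,W)$ from the combinatorial formula $t(T,W_n)=\sum_{\phi\colon V(T)\to[d]}\prod_v p^{(n)}_{\phi(v)}\prod_{uv\in E(T)}s_{\phi(u)\phi(v)}$, using that $(p^{(n)}_i)_i\to(\alpha_i)_i$ in $\ell^1$ by Scheff\'e's lemma since both are probability vectors.)

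For case \eqref{eq:case2}, let $V_1,\dots,V_K$ be the big classes, so $\beta_K=\alpha$ and the small classes together carry the remaining mass $1-\alpha$; define $W$ to equal $s_{kl}$ on $J_k\times J_l$ for $k,l\leq K$, to equal $s_{k0}$ on $J_k\times[\alpha,1)$ and on $[\alpha,1)\times J_k$, and to equal $s_0$ on $[\alpha,1)^2$. The $L^1$ estimate splits according to where $W_n$ can differ from $W$: on the big--big region the argument is exactly that of case \eqref{eq:case1}; on every off-diagonal small--small block $W_n\equiv s_0$ and on every big--small block $W_n\equiv s_{k0}$ by the hypotheses of \eqref{eq:case2}, so $W_n$ matches $W$ there up to the vanishing grid discrepancy; and the only remaining region is the union of the diagonal blocks of the small classes, whose total area is $\sum_{k>K}(n_k/n)^2\leq\bigl(\max_{k>K}n_k/n\bigr)\sum_{k>K}(n_k/n)\leq n_{K+1}/n\to\alpha_{K+1}=0$ by the size ordering. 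Hence $\|W_n-W\|_{L^1}\to 0$ and Theorem \ref{main}(2) applies again.

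I expect the bookkeeping in case \eqref{eq:case2} to be the main obstacle: one must verify that the $n$-step profile can deviate substantially from its limit only on the diagonal blocks of the small classes, and that the ordering $\alpha_1\geq\alpha_2\geq\cdots$ forces those blocks' union to be negligible. This is precisely the role of the two extra variance hypotheses in \eqref{eq:case2}, and it reflects the earlier remark that without them no limiting variance profile need exist for general variance profiles.
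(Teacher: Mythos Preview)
Your approach is correct and essentially identical to the paper's: both construct the same limiting step-graphon $W$ in each case and verify $\delta_{\Box}(W_n,W)\to 0$ (the paper via pointwise a.e.\ convergence of $W_n$ to $W$ followed by dominated convergence, you via an explicit $L^1$ tail estimate), then invoke Theorem~\ref{main2}. Two minor remarks: the reordering step is unnecessary since the hypothesis $\alpha_1\geq\alpha_2\geq\cdots$ already fixes the block indexing, and your restriction in case~\eqref{eq:case2} to finitely many big classes is a harmless simplification consistent with the paper's intended setting (as reflected in its definition of $s_0$ and $s_{k0}$).
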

  We illustrate the limiting graphon for case \eqref{eq:case2} in Figure \ref{infiblock}. Different colors represent different variances, and with our assumptions, all blocks of size $|V_k|\times |V_l|$ where $V_k,V_l$ are small converge to a diagonal line inside the last big block.
  \begin{figure}
 \includegraphics[width=2.5 cm]{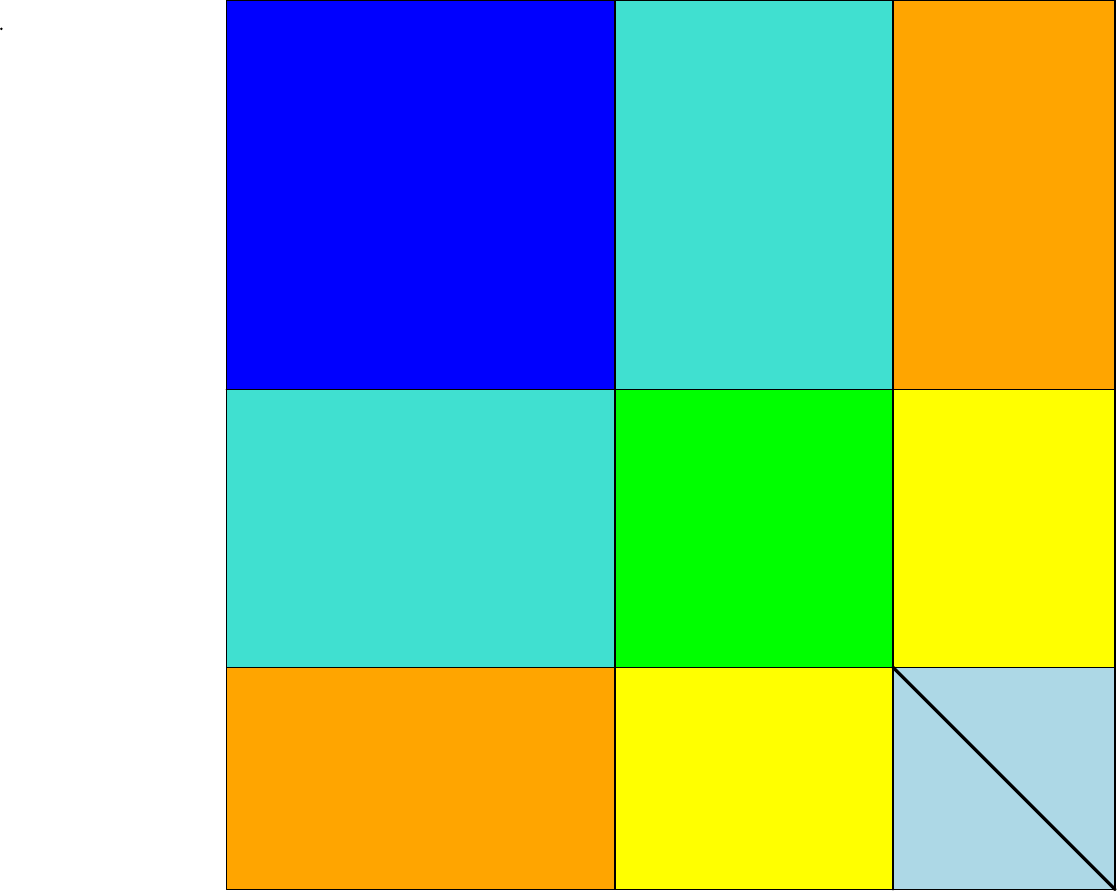}	
 \caption{limiting graphon with infinite many small classes}
 \label{infiblock}
 \end{figure}
 \begin{proof}[Proof of Theorem \ref{blockmatrix}]
 For case \eqref{eq:case1}, assume $  \sum_{i=1}^{\infty}\alpha_i=1$. Define 
$  \beta_0=0, \beta_i=\sum_{j=1}^i \alpha_j, i\geq 1.$ Then we can define a graphon $W$ as 
 \begin{align}
 W(x,y)= s_{ij}, \quad  \forall(x,y)\in[\beta_{i-1},\beta_i)\times [\beta_{j-1},\beta_j)\notag
 \end{align}
if $\beta_{i-1}\not= \beta_i,\beta_{j-1}\not=\beta_j$.
Then $W(x,y)$ is defined on $[0,1]^2$ almost everywhere. From our construction, $W_n(x,y)\to W(x,y)$ point-wise almost everywhere. By the Dominated Convergence Theorem, $\|W_n-W\|_{\Box}\to 0$.  For case \eqref{eq:case2}, similarly, we define $W$ in the following way,
 \begin{align*}
 W(x,y)=\left\{
 \begin{aligned}
 	&s_{ij},& \text{ if } &	(x,y)\in[\beta_{i-1},\beta_i)\times [\beta_{j-1},\beta_j), \alpha_i,\alpha_j\not=0,\\
 &s_0, &\text{ if } &(x,y)\in [\alpha,1]^2,\\
 &s_{i0}, &\text{ if }& (x,y)\in [\beta_{i-1},\beta_i)\times [\alpha,1] \text{ or } [\alpha,1]\times [\beta_{i-1},\beta_i).
 \end{aligned}
 \right.
 \end{align*}
Then $W$ is a graphon defined on $[0,1]^2$. Note that 
$ 
\lim_{n\to\infty} W_n(x,y)=W(x,y)	
$
for all $(x,y)\in [0,1]^2$ outside  the  subset of the diagonal $\displaystyle \{(x,y): x=y, x\in [\alpha,1]\},$
which is a zero measure set on $[0,1]^2$. So we have $\delta_{\Box}(W_n,W)\to 0$.  Then the result follows from Theorem \ref{main2}. 
 \end{proof}

 \section{Stochastic Block Models}\label{sSBM}

The adjacency matrix $A_n$ of a stochastic block model(SBM) with a growing number of classes is a random block matrix. A new issue here is $\mathbb EA_n\not=0$, which does not fit our assumptions in Section \ref{sec:randomblock}. However some perturbation analysis of the empirical measures can be applied to address this issue. In this section, we consider the  adjacency matrix $A_n$ for both sparse and dense SBMs with the following assumptions:
\begin{enumerate}
\item $  \frac{n_k}{n}\to \alpha_k\in [0,\infty), 1\leq k\leq d,$ where $d$ depends on $n$.
\item Diagonal elements in $A_n$ are $0$. Entries in the block $V_i\times V_i$ are independent Bernoulli random variables with parameter $p_{ii}$ depending on $n$ up to symmetry. Entries in the block $V_k\times V_l, k\not=l$ are independent Bernoulli random variables with parameter $p_{kl}$ depending on $n$. 
\item Let $  p=\sup_{ij} p_{ij}$. Assume $p=\omega(\frac{1}{n})$ and $  \sup_n p<1$.
\item Denote $\sigma^2:=p(1-p)$, and assume  \[\lim_{n\to\infty}\frac{p_{ij}(1-p_{ij})}{\sigma^2}=s_{ij}\in [0,1] \text{ for some constant } s_{ij}.\] 
\end{enumerate} 
    If $p\to 0$ (the sparse case), by the same argument in  \eqref{eq:sparseperturb}, $\frac{A_n-\mathbb EA_n}{\sigma\sqrt n}$ and $\frac{A_n}{\sigma\sqrt n}$ have the same limiting spectral distribution, we then have the following corollary from Theorem \ref{blockmatrix}.
    \begin{cor}
    	Let $A_n$ be the adjacency matrix of a sparse SBM with $p\to 0$,  $d\to\infty$ as $n\to\infty$. The empirical spectral distribution of $\frac{A_n}{\sqrt{n}}$ converges almost surely to a probability measure $\mu$  if one of the extra conditions below holds.
 \begin{enumerate}
 \item 	$  \sum_{i=1}^{\infty}\alpha_i=1$ and $\alpha_1\geq \alpha_2\geq \cdots \geq 0$, or
 \item $  \sum_{i=1}^{\infty}\alpha_i=\alpha <1$, $\alpha_1\geq \alpha_2\geq \cdots \geq 0$; also,  for any  two small classes $V_k, V_l, k\not=l$,   $s_{kl}=s_0$ for some constant $s_0$.
 	  For any large class $V_k$ and small class $V_l$,  $s_{kl}=s_{k0}$ for some constant $s_{k0}$.
 \end{enumerate}
    \end{cor}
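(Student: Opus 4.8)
The plan is to deduce this corollary from Theorem~\ref{blockmatrix} by a ``center, rescale, and perturb'' argument, absorbing the deterministic matrix $\mathbb EA_n$ through the Lévy-distance bound of Lemma~\ref{levy} exactly in the spirit of \eqref{eq:sparseperturb}. Write $\sigma^2=p(1-p)$, $B_n:=A_n-\mathbb EA_n$, $\tilde A_n:=B_n/\sigma$, and $M_n:=\tilde A_n/\sqrt n=\frac{A_n-\mathbb EA_n}{\sigma\sqrt n}$, the centered counterpart of the matrix $\frac{A_n}{\sigma\sqrt n}$ appearing in the discussion preceding the statement. First I would check that $\tilde A_n$ is a general Wigner-type matrix in the sense of Section~\ref{wignertype}: its entries $\tilde a_{ij}=(a_{ij}-\mathbb Ea_{ij})/\sigma$ are independent up to symmetry, centered, and on the block $V_k\times V_l$ have variance $s_{kl}^{(n)}:=p_{kl}(1-p_{kl})/\sigma^2$, which is $\le 1$ once $p\le\tfrac12$ (as $t\mapsto t(1-t)$ is increasing on $[0,\tfrac12]$) and converges to $s_{kl}$ by hypothesis; so conditions (1) and (3) there hold. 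Lindeberg's condition \eqref{lind} is automatic: $|\tilde a_{ij}|\le 1/\sigma$ while $n\sigma^2=np(1-p)\to\infty$ (since $np\to\infty$ and $p\to0$), so $1/\sigma=o(\sqrt n)$ and the truncation indicators vanish identically for all large $n$.

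Next I would establish $\delta_\Box(W_n,W)\to 0$, where $W_n$ is the graphon of the variance profile $S_n=(s_{ij}^{(n)})$ and $W$ is the step graphon assembled from the limits $s_{kl}$ exactly as in the proof of Theorem~\ref{blockmatrix} --- case~\eqref{eq:case1} when $\sum_i\alpha_i=1$, and case~\eqref{eq:case2} otherwise, where the blocks indexed by pairs of small classes again collapse onto a Lebesgue-null subset of the diagonal. The only new feature relative to Section~\ref{sec:randomblock} is that the block variances now depend on $n$; this is harmless, since combining $n_k/n\to\alpha_k$ with $s_{kl}^{(n)}\to s_{kl}$ still gives $W_n\to W$ pointwise almost everywhere, and because $0\le W_n\le 1$ the Dominated Convergence Theorem yields $\|W_n-W\|_\Box\le\|W_n-W\|_{L^1}\to0$. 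By Theorem~\ref{main2} (equivalently, by Theorem~\ref{blockmatrix}, whose proof uses only this cut-metric convergence), the empirical spectral distribution of $M_n$ converges almost surely to the probability measure $\mu$ determined by $W$, with the moments and Stieltjes transform given in Theorems~\ref{main}--\ref{main2}.

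It then remains to remove the centering. Since $\frac{A_n}{\sigma\sqrt n}-M_n=\frac{\mathbb EA_n}{\sigma\sqrt n}$ is Hermitian with zero diagonal, Lemma~\ref{levy} gives
\[
L^3\!\Big(F^{\frac{A_n}{\sigma\sqrt n}},\,F^{M_n}\Big)\;\le\;\frac1n\operatorname{tr}\Big[\Big(\tfrac{\mathbb EA_n}{\sigma\sqrt n}\Big)^2\Big]\;=\;\frac{1}{n^2\sigma^2}\sum_{i\ne j}p_{ij}^2\;\le\;\frac{p^2}{\sigma^2}\;=\;\frac{p}{1-p}\;\longrightarrow\;0,
\]
because $p\to0$. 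Hence $\frac{A_n}{\sigma\sqrt n}$ and $M_n$ share the same almost sure limiting spectral distribution $\mu$, which is the assertion of the corollary.

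Given Theorem~\ref{blockmatrix}, this argument is essentially bookkeeping, and the step I expect to matter most is the Lévy-distance estimate above: it is exactly there that the hypothesis $p\to0$ (rather than merely $p=\omega(1/n)$) is used, through $p^2/\sigma^2=p/(1-p)\to0$; in the dense regime $\mathbb EA_n$ has comparable operator norm and one would need a genuinely different comparison (low-rank or resolvent-based), and the off-diagonal-block structure would not wash out. The only mildly technical point is the $n$-dependence of the variance profile flagged above, i.e.\ verifying that the cut-metric convergence $W_n\to W$ in the proof of Theorem~\ref{blockmatrix} is insensitive to replacing the exact block variances $s_{kl}$ by asymptotic ones.
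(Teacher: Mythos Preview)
Your proposal is correct and follows exactly the paper's approach: the paper states (in the sentence immediately preceding the corollary) that by the same L\'evy-distance perturbation argument as \eqref{eq:sparseperturb}, $\frac{A_n-\mathbb EA_n}{\sigma\sqrt n}$ and $\frac{A_n}{\sigma\sqrt n}$ have the same limiting spectral distribution, and then invokes Theorem~\ref{blockmatrix} for the centered matrix. Your write-up is simply a more detailed unpacking of this, and you are right to flag the $n$-dependence of the block variances $s_{kl}^{(n)}=p_{kl}(1-p_{kl})/\sigma^2$ as a point the paper glosses over---your dominated-convergence fix is the correct way to handle it.
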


   If $p\not\to 0$ (the dense case), to get the limiting spectral distribution of  the non-centered matrix $A_n$, we need to consider the effect of  $\mathbb EA_n$. If $\mathbb EA_n$ is of relatively low rank, we can still do a perturbation analysis from Lemma \ref{rankinequality}.
   The following theorem is a statement for the dense case.
\begin{cor}The empirical spectral distribution of the adjacent matrix $\frac{A_n}{\sqrt n \sigma}$ for a SBM with $p>c$ for a constant $c>0$ converges almost surely if $ d=o(n)$ and one of the following holds: \label{SBM}
\begin{enumerate}
 \item 	$  \sum_{i=1}^{\infty}\alpha_i=1$, $\alpha_1\geq \alpha_2\geq \cdots \geq 0$, or
 \item $ \sum_{i=1}^{\infty}\alpha_i=\alpha <1$, $\alpha_1\geq \alpha_2\geq \cdots \geq 0$. For any  two small classes $V_k, V_l, k\not=l$,   $s_{kl}=s_0$ for some constant $s_0$.
 	 For any large class $V_k$ and small class $V_l$,  $s_{kl}=s_{k0}$ for some constant $s_{k0}$.
 \end{enumerate} 
 \end{cor}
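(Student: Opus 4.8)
The plan is to reduce to the centered random block matrix already handled by Theorem~\ref{blockmatrix} via two \emph{deterministic} perturbation estimates that together absorb the nonzero mean $\mathbb E A_n$. Write
\[
\mathbb E A_n = M_n - D_n,
\]
where $M_n$ is the $n\times n$ block-constant matrix with $(M_n)_{ij}=p_{kl}$ whenever $i\in V_k,\ j\in V_l$ (diagonal included), and $D_n=\operatorname{diag}\big((M_n)_{11},\dots,(M_n)_{nn}\big)$ is the diagonal correction that restores the vanishing diagonal of $\mathbb E A_n$. Since $M_n=Z_nPZ_n^{\top}$ with $Z_n\in\{0,1\}^{n\times d}$ the class-membership matrix and $P=(p_{kl})_{k,l=1}^{d}$ symmetric, we have $\operatorname{rank}(M_n)\le d=o(n)$, while every diagonal entry of $D_n$ equals some $p_{kk}\le p<1$. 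Both $M_n$ and $D_n$ are deterministic, so the bounds below are exact rather than only almost sure.

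First I would apply the Rank Inequality (Lemma~\ref{rankinequality}) to $\frac{A_n}{\sigma\sqrt n}$ and $\frac{A_n-M_n}{\sigma\sqrt n}$, whose difference $\frac{M_n}{\sigma\sqrt n}$ has rank at most $d$:
\[
\Big\|F^{A_n/(\sigma\sqrt n)}-F^{(A_n-M_n)/(\sigma\sqrt n)}\Big\|_\infty\le\frac{d}{n}\longrightarrow 0 .
\]
Next I would compare $\frac{A_n-M_n}{\sigma\sqrt n}$ with the centered matrix $\frac{A_n-\mathbb E A_n}{\sigma\sqrt n}$, whose difference is $\frac{D_n}{\sigma\sqrt n}$, using the L\'evy Distance Bound (Lemma~\ref{levy}):
\[
L^3\Big(F^{(A_n-M_n)/(\sigma\sqrt n)},\,F^{(A_n-\mathbb E A_n)/(\sigma\sqrt n)}\Big)\le\frac{1}{n}\operatorname{tr}\!\Big[\Big(\tfrac{D_n}{\sigma\sqrt n}\Big)^{2}\Big]=\frac{1}{n^{2}\sigma^{2}}\sum_{k=1}^{d}n_kp_{kk}^{2}\le\frac{p^{2}}{n\sigma^{2}} ,
\]
which tends to $0$ because $p>c$ and $\sup_n p<1$ keep $\sigma^{2}=p(1-p)$ bounded away from $0$. (This is precisely where the density hypothesis enters; in the sparse case the analogous quantity was controlled in \eqref{eq:sparseperturb} instead.)

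It then remains to identify $\frac{A_n-\mathbb E A_n}{\sigma\sqrt n}$ as the rescaled centered random block matrix covered by Theorem~\ref{blockmatrix}. Its entries are centered, independent up to symmetry, and bounded in absolute value by $2/\sigma=O(1)$, so Lindeberg's condition \eqref{lind} holds trivially (the truncation indicator is identically $0$ for large $n$); the entries in the $(k,l)$-block have variance $p_{kl}(1-p_{kl})/\sigma^{2}$, which is bounded by a constant and converges to $s_{kl}\in[0,1]$. Consequently the graphon $W_n$ of this variance profile converges pointwise almost everywhere, and boundedly, to the step graphon $W$ built from the limiting $s_{kl}$ exactly as in the proof of Theorem~\ref{blockmatrix}, so $\delta_\Box(W_n,W)\to0$ by the Dominated Convergence Theorem under case~(1) or case~(2). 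Theorem~\ref{main2} then gives that $F^{(A_n-\mathbb E A_n)/(\sigma\sqrt n)}$ converges almost surely to a probability measure $\mu$ with the indicated moments and Stieltjes transform, and combining this with the two perturbation estimates --- noting that $\|\cdot\|_\infty\to0$ and $L\to0$ both force weak convergence --- yields that $F^{A_n/(\sigma\sqrt n)}$ converges almost surely to the same $\mu$.

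I expect the main obstacle to be the correct bookkeeping of $\mathbb E A_n$: it is negligible in neither operator norm, Frobenius norm, nor rank on its own, and the crux is recognizing that it splits cleanly into a rank-$d$ block-constant part --- discarded by the Rank Inequality, which is exactly where $d=o(n)$ is used --- plus a diagonal part of operator norm $O(1/\sqrt n)$ --- discarded by the L\'evy bound, which is exactly where $p>c$ is used to keep $\sigma$ away from $0$. Once this decomposition is in hand, verifying that the centered remainder satisfies the hypotheses of Theorem~\ref{blockmatrix} is routine.
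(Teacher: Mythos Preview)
Your proof is correct and uses the same two perturbation tools (Lemma~\ref{rankinequality} for a rank-$d$ piece, Lemma~\ref{levy} for a diagonal piece) to reduce to the centered block matrix handled by Theorem~\ref{blockmatrix}, just as the paper does. The only difference is in bookkeeping: the paper introduces an auxiliary matrix $\tilde A_n$ obtained from $A_n$ by filling in an independent Bernoulli($p_{kk}$) diagonal, so that $\mathbb E\tilde A_n$ is exactly block-constant of rank $d$ and $\tilde A_n-\mathbb E\tilde A_n$ has a variance profile that is block-constant \emph{including} the diagonal, allowing Theorem~\ref{blockmatrix} to be invoked verbatim; you instead decompose $\mathbb E A_n=M_n-D_n$ deterministically and appeal to the graphon argument directly, correctly noting that the zero-variance diagonal of $A_n-\mathbb E A_n$ is a measure-zero set in $W_n$ and does not affect $\delta_\Box(W_n,W)\to0$. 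Your route is slightly more direct (no auxiliary randomness), while the paper's trick makes the hypothesis-checking for Theorem~\ref{blockmatrix} literally automatic; the substance is identical.
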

 
 \begin{proof} Let $\tilde{A}_n$ be a random block matrix such that
 $\tilde{a}_{ij}=a_{ij}$ for $i\not=j$ and $\{\tilde{a}_{ii}\}_{1\leq i\leq n}$ be independent Bernoulli random variables with parameter $p_{kk}$ if $i\in V_k$. Then $\textnormal{rank}(\mathbb E\tilde{A}_n)=d$. 
 
 Let $L\left(F^{\tilde{A}_n/\sigma\sqrt n},F^{{A}_n/\sigma\sqrt n}\right)$ be the L\'{e}vy distance between the empirical spectral measures of $\frac{A_n}{\sigma\sqrt n}$ and $\frac{\tilde{A}_n}{\sigma\sqrt n}$, then by Lemma \ref{levy}, 
 \begin{align}\label{eq:levyboundSBM}
 L^3\left(F^{\frac{\tilde{A}_n}{\sigma\sqrt n}},F^{\frac{{A}_n}{\sigma\sqrt n}}\right)	\leq \frac{1}{\sigma^2 n^2}\textnormal{tr}\left(\tilde{A}_n-A_n\right)^2=\frac{1}{\sigma^2n^2}\sum_{i=1}^n \tilde{a}_{ii}^2. 
 \end{align}
 The right hand side of \eqref{eq:levyboundSBM} is bounded by $\displaystyle \frac{1}{n\sigma^2}=o(1)$ almost surely. So we have almost surely 
 \begin{align}\label{eq:levybound}
 	\lim_{n\to\infty}L^3\left(F^{\frac{\tilde{A}_n}{\sigma\sqrt n}},F^{\frac{{A}_n}{\sigma\sqrt n}}\right)=0.
 \end{align}
Recall that  the limiting distribution of $\frac{\tilde{A}_n-\mathbb E\tilde{A}_n}{\sigma \sqrt n}$ exists from Theorem \ref{blockmatrix} for random block matrices.  By the Rank Inequality (Lemma \ref{rankinequality}), we have almost surely
 \begin{align}\label{eq:rankbound}
 	\left\|F^{\frac{\tilde{A}_n-\mathbb E\tilde{A}_n}{\sigma\sqrt n}}-F^{\frac{\tilde{A}_n}{\sigma \sqrt n}} \right\|\leq \frac{\textnormal{rank} (\tilde{A}_n-\mathbb E\tilde{A}_n-\tilde{A}_n)}{n}=\frac{\textnormal{rank} (\mathbb E\tilde{A}_n)}{n}=\frac{d}{n}=o(1).
\end{align}
	Then combining \eqref{eq:levybound} and \eqref{eq:rankbound}, almost surely $\frac{A_n}{\sigma\sqrt n}$ has the same limiting spectral distribution as $\frac{\tilde{A}_n-\mathbb E\tilde{A}_n}{\sigma\sqrt n}$.  The conclusion then  follows.
 \end{proof}

 Below, we give an example showing how to construct dense SBMs with a growing number of blocks which satisfies one of the assumptions in Corollary \ref{SBM}. Below is a lemma to justify that our two examples work.
 \begin{lemma} \label{sum1} Assume $  \sum_{i=1}^{\infty}\alpha_i=\alpha\leq 1$ and $1\geq\alpha_1\geq\alpha_2\geq \cdots>0$. Let $\displaystyle k(n):=\sup\left\{k: \alpha_k\geq \frac{1}{n} \right\},$ then $\displaystyle  \frac{k(n)}{n}=o(1)$.
\end{lemma}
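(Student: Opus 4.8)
The plan is to exploit the convergence of the series $\sum_i \alpha_i$ through the smallness of its tail. First I would record two elementary observations. Since $\sum_{i=1}^{\infty}\alpha_i<\infty$, we have $\alpha_k\to 0$, so for each $n$ only finitely many indices satisfy $\alpha_k\ge 1/n$; hence $k(n)$ is well defined and finite. Moreover, because $(\alpha_i)$ is nonincreasing, the set $\{k:\alpha_k\ge 1/n\}$ is an initial segment, i.e. it equals $\{1,2,\dots,k(n)\}$, and in particular $\alpha_{k(n)}\ge 1/n$.

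Next, fix $\varepsilon>0$ and use convergence of the series to choose $M=M(\varepsilon)$ with $\sum_{i>M}\alpha_i<\varepsilon$. Since every $\alpha_i$ is strictly positive we have $\alpha_M>0$, so for all $n$ large enough (namely $n\ge 1/\alpha_M$) we get $\alpha_M\ge 1/n$, and therefore $k(n)\ge M$. For such $n$, using monotonicity again,
\[
\varepsilon>\sum_{i=M+1}^{\infty}\alpha_i\;\ge\;\sum_{i=M+1}^{k(n)}\alpha_i\;\ge\;(k(n)-M)\,\alpha_{k(n)}\;\ge\;\frac{k(n)-M}{n}.
\]
Hence $k(n)<\varepsilon n+M$, so $k(n)/n<\varepsilon+M/n$, and letting $n\to\infty$ yields $\limsup_{n\to\infty}k(n)/n\le\varepsilon$. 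As $\varepsilon>0$ was arbitrary, $k(n)/n\to 0$, which is the claim.

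There is essentially no hard step in this argument; the only point that needs a little care is the monotonicity observation, which is what simultaneously guarantees that $k(n)\ge M$ for all large $n$ (so that the partial tail $\sum_{i=M+1}^{k(n)}\alpha_i$ is nonempty and controlled by the full tail) and that this partial tail is bounded below by $(k(n)-M)\alpha_{k(n)}\ge (k(n)-M)/n$.
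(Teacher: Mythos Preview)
Your proof is correct, and it takes a genuinely different and more elementary route than the paper's. The paper argues by contradiction: assuming $k(n_l)/n_l\ge\varepsilon$ along a subsequence, it uses the telescoping estimate $\sum_l (k(n_l)-k(n_{l-1}))/n_l\le\sum_i\alpha_i=\alpha$ together with $n_{l}\le k(n_{l})/\varepsilon$ to deduce that $\sum_l (k(n_{l+1})-k(n_l))/k(n_{l+1})<\infty$, hence $k(n_{l+1})/k(n_l)\to 1$, and then reaches a contradiction via the integral comparison $\sum_l (k(n_{l+1})-k(n_l))/k(n_l)\ge\int_{k(n_1)}^\infty dx/x=\infty$. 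Your argument bypasses all of this machinery: you simply fix $M$ with $\sum_{i>M}\alpha_i<\varepsilon$ and use monotonicity to get $(k(n)-M)/n\le\sum_{M<i\le k(n)}\alpha_i<\varepsilon$, which immediately yields $\limsup k(n)/n\le\varepsilon$. This is shorter, direct rather than by contradiction, and isolates the single inequality $k(n)\alpha_{k(n)}\le$ (partial tail) that drives the result; the paper's approach is closer in spirit to Abel--Dini type arguments and is more intricate than necessary for this lemma.
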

\begin{proof} If not, there exists a subsequence $\{n_l\}$ such that $\frac{k(n_l)}{n_l}\geq \varepsilon>0$ for some $\varepsilon$.
Then  
\begin{align*}\frac{1}{n_l}\leq \alpha_{k(n_l)} \quad \text{ and }\quad \frac{k(n_l)-k(n_{l-1})}{n_l}\leq \sum_{i=k(n_{l-1})+1}^{k(n_l)}\alpha_i.	
\end{align*}
Hence
\begin{align}
\sum_{l=1}^{\infty} \frac{k(n_l)-k(n_{l-1})}{n_l}&\leq \sum_{i=1}^{\infty}\alpha_i=\alpha,\notag\\
\sum_{l=1}^{\infty}\frac{k(n_{l+1})-k(n_{l})}{k(n_{l+1})}&\leq \frac{\alpha}{\varepsilon}<\infty.\label{inf}
\end{align}
This implies $  \frac{k(n_{l+1})-k(n_{l})}{k(n_{l+1})}\to 0,$ so $\frac{k(n_{l+1})}{k({n_{l}})}\to 1$ as $n\to\infty$, therefore \eqref{inf} implies
\begin{align}\sum_{l=1}^{\infty}\frac{k(n_{l+1})-k(n_{l})}{k(n_{l})}<\infty.\label{finite}
\end{align}	
However, $$\displaystyle \sum_{l=1}^{\infty}\frac{k(n_{l+1})-k(n_{l})}{k(n_{l})}\geq \int_{k(n_1)}^{\infty}\frac{1}{x}dx=\infty,$$ 
which is a contradiction to \eqref{finite}. Lemma \ref{sum1} is then proved.
\end{proof}

\begin{example}
Let $\alpha_1\geq \alpha_2\geq \cdots>0$ and $\sum_{i=1}^{\infty}\alpha_i=1$. For each $n$, we generate the class $V_i$ with size $n_i=\lfloor n\alpha_i\rfloor$ for $i=1,2,\dots$ until $n_i=0$. Then we generate the last class $V_d$ with size $ n_d=n-\sum_{i=1}^{d-1}n_i$.	
Note that for every fixed $i$, $ \frac{n_i}{n}\to \alpha_i$. From Lemma \ref{sum1}, the number of blocks satisfies $d\leq k(n)+1=o(n).$
In particular, we have the following examples for the choice of $\alpha_i$'s:\begin{enumerate}
\item 	$  \alpha_i=\frac{C}{\gamma^i}$ for some constant $C,\gamma>0$ with $  \sum_{i=1}^{\infty}\alpha_i=1$.
\item $  \alpha_i=\frac{C}{i^{\beta}}$ for some $C>0,\beta>1$ with $  \sum_{i=1}^{\infty}\alpha_i=1.$
\end{enumerate}

 \end{example}

\begin{example}
	Let $  \alpha_1\geq \alpha_2\geq \cdots>0$ and $\sum_{i=1}^{\infty}\alpha_i=\alpha<1$. For each $n$, we can generate a class $V_i$ with size $n_i=\lfloor n\alpha_i\rfloor$ for $i=1,2,\dots,$ until $n_i=0$. Then generate $o(n)$ many small classes of size $o (n)$. By Lemma \ref{sum1}, $d=o(n)$.
\end{example}

\section{Random Gram Matrices}\label{gram}
In the last section, we present an example beyond general Wigner-type matrices to which  our main result can apply.
Let $X_n$ be a $m\times n$ complex random matrix whose entries  are independent. Consider a \textit{random Gram matrix} $M_n:=\frac{1}{n}X_nX_n^*$ with a variance profile matrix $S_n=(s_{ij})_{1\leq i\leq m, 1\leq j\leq n}$ satisfies the following conditions:
\begin{enumerate}
\item $\mathbb Ex_{ij}=0,\mathbb E|x_{ij}|^2=s_{ij}, \text{ for all }  1\leq i\leq m, 1\leq j\leq n. $
\item (Lindeberg's condition) for any constant $\eta>0$,\label{eta}
\begin{align}\lim_{n\to\infty} \frac{1}{nm}\sum_{i=1}^m\sum_{j=1}^n\mathbb E[	|x_{ij}|^2\mathbf{1}(|x_{ij}|\geq \eta\sqrt n)]=0.\label{lind}	
\end{align}
\item $  \sup_{ij}s_{ij}\leq C $ for some  constant $C\geq 0$.
\item $\displaystyle \lim_{n\to\infty}\frac{m}{n}=y\in(0,\infty)$.
\end{enumerate}

Let \begin{align}\label{eq:H_n}
 H_n:=\begin{bmatrix}
	0 & X_n\\
	X_n^* &0
\end{bmatrix}.\end{align} We first find the relation between the trace of $M_n$ and the trace of $H_n$ in the following lemma.

\begin{lemma} \label{trace} For any integer $k\geq 1$, the following holds:
\begin{align}
\frac{1}{m}\textnormal{tr}M_n^{k}=\frac{(m+n)^{k}}{2mn^k}\textnormal{tr}\left(\frac{H_n}{\sqrt{n+m}}\right)^{2k}.	
\end{align}
\end{lemma}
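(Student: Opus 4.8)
The plan is to exploit the block structure of $H_n$ and the fact that squaring $H_n$ decouples the off-diagonal blocks. First I would compute
\[
H_n^2=\begin{bmatrix} 0 & X_n\\ X_n^* & 0\end{bmatrix}^2=\begin{bmatrix} X_nX_n^* & 0\\ 0 & X_n^*X_n\end{bmatrix},
\]
so that $H_n^{2k}$ is block diagonal with diagonal blocks $(X_nX_n^*)^k$ (of size $m\times m$) and $(X_n^*X_n)^k$ (of size $n\times n$). Taking traces gives
\[
\textnormal{tr}\bigl(H_n^{2k}\bigr)=\textnormal{tr}\bigl((X_nX_n^*)^k\bigr)+\textnormal{tr}\bigl((X_n^*X_n)^k\bigr).
\]

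Next I would use the cyclic invariance of the trace: for any rectangular matrices one has $\textnormal{tr}\bigl((AB)^k\bigr)=\textnormal{tr}\bigl((BA)^k\bigr)$, and applying this with $A=X_n$, $B=X_n^*$ shows the two terms above are equal, hence $\textnormal{tr}\bigl(H_n^{2k}\bigr)=2\,\textnormal{tr}\bigl((X_nX_n^*)^k\bigr)$. Since $M_n=\frac{1}{n}X_nX_n^*$, we have $\textnormal{tr}\bigl(M_n^k\bigr)=n^{-k}\textnormal{tr}\bigl((X_nX_n^*)^k\bigr)=\frac{1}{2n^k}\textnormal{tr}\bigl(H_n^{2k}\bigr)$, so that
\[
\frac{1}{m}\textnormal{tr}\bigl(M_n^k\bigr)=\frac{1}{2mn^k}\,\textnormal{tr}\bigl(H_n^{2k}\bigr).
\]

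Finally I would reinsert the normalization: $\textnormal{tr}\bigl((H_n/\sqrt{n+m})^{2k}\bigr)=(n+m)^{-k}\textnormal{tr}\bigl(H_n^{2k}\bigr)$, i.e.\ $\textnormal{tr}\bigl(H_n^{2k}\bigr)=(n+m)^{k}\textnormal{tr}\bigl((H_n/\sqrt{n+m})^{2k}\bigr)$, and substituting into the previous display yields exactly
\[
\frac{1}{m}\textnormal{tr}M_n^{k}=\frac{(m+n)^{k}}{2mn^k}\textnormal{tr}\left(\frac{H_n}{\sqrt{n+m}}\right)^{2k}.
\]
There is no real obstacle here; the statement is a purely algebraic identity and the argument is a short computation. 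The only points requiring a small amount of care are keeping track of the differing block sizes $m$ and $n$ (so that the two trace terms, though equal in value, live on spaces of different dimension) and bookkeeping the scaling factors $n^{k}$ versus $(n+m)^{k}$; I would state the cyclicity identity $\textnormal{tr}((AB)^k)=\textnormal{tr}((BA)^k)$ explicitly as the one nontrivial ingredient.
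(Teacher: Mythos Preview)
Your proposal is correct and follows essentially the same route as the paper: both arguments reduce to the identity $\textnormal{tr}(H_n^{2k})=2\,\textnormal{tr}\bigl((X_nX_n^*)^k\bigr)$ and then perform the same scaling bookkeeping. The only cosmetic difference is that the paper justifies this identity by observing that the nonzero eigenvalues of $H_n$ occur in pairs $\{\pm\sqrt{\lambda}\}$ with $\lambda$ a nonzero eigenvalue of $X_nX_n^*$, whereas you compute $H_n^2$ as a block-diagonal matrix and invoke trace cyclicity; these are two equivalent one-line justifications of the same fact.
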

\begin{proof}
	It is a  simple linear algebra result that nonzero eigenvalues of $H$ come in pairs $\{-\sqrt{\lambda}, \sqrt{\lambda}\}$ where $\lambda>0$ is a non-zero eigenvalue of $X_nX_n^*$. Therefore for $k\geq 1$, 
	\begin{align}
	\textnormal{tr}(H_n^{2k})=2	\textnormal{tr}(X_nX_n^*)^k.
	\end{align}
We then have for $k\geq 1$,
\begin{align}
\frac{1}{m}\textnormal{tr}M_n^{k}&=\frac{1}{m}	\textnormal{tr}\left(\frac{1}{n}X_nX_n^*\right)^{k}=\frac{1}{2n^km}\cdot 2\textnormal{tr}(X_nX_n^*)^k=\frac{(m+n)^k}{2mn^k}\textnormal{tr}\left(\frac{H_n}{\sqrt{n+m}}\right)^{2k}.
\end{align}
\end{proof}
Since $H_n$ is a $(n+m)\times (n+m)$ general Wigner-type matrix with a variance profile 
\begin{align}\label{Sigma}
\Sigma_n:= \begin{bmatrix}
	0 & S_n\\
	S_n^T &0
\end{bmatrix},\end{align} we can decide the moments of the limiting spectral distribution of $M_n$ from Theorem \ref{main} and Lemma \ref{trace} in the following theorem.

 	\begin{theorem} \label{Wig} Let $M_n$ be a random Gram matrix with the assumptions above  and $W_n$ be the corresponding graphon of $\Sigma_n$. If for any finite tree $T$, $t(T,W_n)$ converges as $n\to\infty$, then the empirical spectral distribution of $M_n$ converges almost surely to a probability measure $\mu$ such that for $k\geq 1$, 
\begin{align*}
 \int x^{k} d\mu &=\frac{(1+y)^{k+1}}{2y}\sum_{j=1}^{C_k}\lim_{n\to\infty}t(T_j^{k+1},W_n).
 \end{align*} 
\end{theorem}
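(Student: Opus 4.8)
The plan is to reduce the statement to Theorem~\ref{main} applied to the symmetrized matrix $H_n$ of \eqref{eq:H_n}, and then translate the resulting moment asymptotics back to $M_n$ through Lemma~\ref{trace}. Write $N=N_n:=n+m$, so that $\frac{1}{\sqrt N}H_n$ is an $N\times N$ normalized general Wigner-type matrix with variance profile $\Sigma_n$ from \eqref{Sigma} and graphon representation $W_n$. First I would verify that $H_n$ satisfies the three hypotheses of Theorem~\ref{main} with $N$ in the role of $n$: the centering and the identity $\mathbb E|(H_n)_{ab}|^2=(\Sigma_n)_{ab}$ are immediate; the bound $\sup(\Sigma_n)_{ab}\le C$ is inherited from $\sup_{ij}s_{ij}\le C$; and Lindeberg's condition for $\frac{1}{\sqrt N}H_n$ follows from Lindeberg's condition for $X_n$ together with $m/n\to y$, since $N\ge n$ gives $\mathbf{1}(|x_{ij}|\ge\eta\sqrt N)\le\mathbf{1}(|x_{ij}|\ge\eta\sqrt n)$ and $\frac{2nm}{N^{2}}$ is bounded. (The indexing is mildly unusual --- the $n$-th matrix of the sequence has size $N_n=n+m_n$ --- but all Theorem~\ref{main} needs is $N_n\to\infty$, which holds since $y\in(0,\infty)$.)

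Given this, the hypothesis that $t(T,W_n)$ converges for every finite tree $T$ lets me apply Theorem~\ref{main}(1); more precisely, I would use the almost-sure convergence of empirical moments
\[
\frac1N\,\textnormal{tr}\Big(\tfrac{1}{\sqrt N}H_n\Big)^{2k}\ \longrightarrow\ \sum_{j=1}^{C_k}\lim_{n\to\infty}t(T_j^{k+1},W_n),\qquad \frac1N\,\textnormal{tr}\Big(\tfrac{1}{\sqrt N}H_n\Big)^{2k+1}\ \longrightarrow\ 0,
\]
which is established inside the proof of Theorem~\ref{main} via the fourth-moment bound and Borel--Cantelli. Combining this with Lemma~\ref{trace} in the form
\[
\frac1m\,\textnormal{tr}\,M_n^{\,k}=\frac{N^{k+1}}{2mn^{k}}\cdot\frac1N\,\textnormal{tr}\Big(\tfrac{H_n}{\sqrt N}\Big)^{2k},
\]
and noting that the deterministic prefactor satisfies $\frac{N^{k+1}}{2mn^{k}}=\frac{(1+m/n)^{k+1}}{2(m/n)}\to\frac{(1+y)^{k+1}}{2y}$, I obtain that for each fixed $k\ge1$, almost surely,
\[
\frac1m\,\textnormal{tr}\,M_n^{\,k}\ \longrightarrow\ \gamma_k:=\frac{(1+y)^{k+1}}{2y}\sum_{j=1}^{C_k}\lim_{n\to\infty}t(T_j^{k+1},W_n).
\]

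It then remains to upgrade this almost-sure convergence of empirical moments to almost-sure weak convergence of $F^{M_n}$. Since $M_n$ is positive semidefinite, both $\frac1m\textnormal{tr}\,M_n^{\,k}$ and $\gamma_k$ are nonnegative, and from $C_k\le 4^k$ and $t(T_j^{k+1},W_n)\le C^k$ we get $0\le\gamma_k\le \mathrm{const}\cdot(4C(1+y))^{k}$, so $\gamma_{2k}^{1/2k}$ stays bounded in $k$ and $\sum_k\gamma_{2k}^{-1/2k}=\infty$; hence the moment problem for $(\gamma_k)$ is determinate. The standard moment-method argument used in the proof of Theorem~\ref{main} (Carleman's condition together with determinacy) then produces a probability measure $\mu$ on $[0,\infty)$ with $\int x^k\,d\mu=\gamma_k$ for all $k\ge1$ to which $F^{M_n}$ converges weakly almost surely, which is exactly the claimed identity. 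The work here is essentially bookkeeping rather than a genuine obstacle: the point to get right is tracking the two different normalizations --- the $m$ eigenvalues of $M_n$ against the $N=n+m$ eigenvalues of $\frac{1}{\sqrt N}H_n$ --- through Lemma~\ref{trace} so that the constant $\frac{(1+y)^{k+1}}{2y}$ emerges correctly, and being careful to extract the almost-sure (not merely in-expectation) moment statement from the proof of Theorem~\ref{main}. A secondary remark, not needed for the argument, is that the bipartite block-anti-diagonal shape of $\Sigma_n$ forces each $t(T_j^{k+1},W_n)$ to collapse to a sum over the proper $2$-colourings of $T_j^{k+1}$.
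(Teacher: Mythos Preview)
Your proposal is correct and follows essentially the same route as the paper: apply Theorem~\ref{main} to the symmetrized matrix $H_n/\sqrt{N}$, extract the almost-sure moment limits, and translate them to $M_n$ via Lemma~\ref{trace} together with $(m+n)^{k+1}/(2mn^k)\to(1+y)^{k+1}/(2y)$. Your write-up is in fact more careful than the paper's, which leaves the verification of the Wigner-type hypotheses for $H_n$ and the Carleman/determinacy step implicit; your explicit checks of Lindeberg's condition for $H_n$ and of the growth bound $\gamma_k\le\mathrm{const}\cdot(4C(1+y))^k$ fill in exactly what the paper's ``the result follows'' glosses over.
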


\begin{proof}
From Lemma \ref{trace}, for $k\geq 1$,
\begin{align}\label{85}
\frac{1}{m}\textnormal{tr}M_n^{k}&=\frac{(m+n)^{k+1}}{2mn^k}\cdot \frac{1}{n+m}\textnormal{tr}\left(\frac{H_n}{\sqrt{n+m}}\right)^{2k}.	
\end{align}
From Theorem \ref{main}, almost surely
$$
\lim_{n\to\infty}\frac{1}{n+m}\textnormal{tr}\left(\frac{H_n}{\sqrt{n+m}}\right)^{2k}=\sum_{j=1}^{C_k}\lim_{n\to\infty}t(T_j^{k+1},W_n).
$$
 Since $ \lim_{n\to\infty} \frac{m}{n}= y>0$,
The result follows from \eqref{85}. \end{proof}

Finally we derive the Stieltjes transform of the limiting spectral distribution from Theorem \ref{main2}.
\begin{theorem}\label{Wig2}
Let $M_n$ be a random Gram matrix with a variance profile $S_n$  and $W_n$ be the corresponding graphon of $\Sigma_n$ defined in \eqref{Sigma}. If $\delta_{\Box}(W_n,W)\to 0$ for some graphon $W$, then the  empirical spectral distribution of $  \frac{M_n}{\sqrt n}$ converges almost surely to a probability measure $\mu$ whose Stieltjes transform $s(z)$ is an analytic solution defined on $ \mathbb C^+$ by  the following equations:
	\begin{align} 
s(z)&=\frac{1+y}{y}\int_{0}^{\frac{y}{1+y}} b(z,u) du, \label{S87}\\
b(z,u)^{-1}&=z	-\int_{\frac{y}{1+y}}^1 \frac{W(u,v)}{(1+y)^{-1}-\int_{0}^{\frac{y}{1+y}}W(u,t)b(z,t)dt} dv,\label{S88}
\end{align}
where $b(z,u)$ is an analytic function defined on $  \mathbb C^+\times \left[0,\frac{y}{1+y}\right]$.
\end{theorem}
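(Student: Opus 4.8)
The plan is to reduce to the Hermitian dilation $H_n$ of \eqref{eq:H_n}, apply Theorem \ref{main2} there, and then transport the conclusion back to $M_n$ via the trace identity of Lemma \ref{trace}. As noted before Theorem \ref{Wig}, $H_n$ is an $(n+m)\times(n+m)$ general Wigner-type matrix with variance profile $\Sigma_n$ of \eqref{Sigma} and $W_n$ is its associated graphon; the Gram hypotheses imply conditions (1)--(3) of Section \ref{wignertype} for $H_n$ (Lindeberg's condition transfers from \eqref{lind} because $\tfrac{2nm}{(n+m)^2}\to\tfrac{2y}{(1+y)^2}$ stays bounded). Since every $\Sigma_n$ vanishes identically on its two square diagonal blocks, of relative sizes $\tfrac{m}{m+n}\to c:=\tfrac{y}{1+y}$ and $\tfrac{n}{m+n}\to 1-c$, I would first argue that the cut-metric limit $W$ may be taken in \emph{bipartite block form}, i.e.\ $W(x,y)=0$ whenever $(x,y)\in[0,c]^2\cup[c,1]^2$. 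With $W$ in that form, Theorem \ref{main2} gives that the empirical spectral distribution of $H_n/\sqrt{n+m}$ converges almost surely to a symmetric probability measure $\nu$ whose Stieltjes transform satisfies $s_\nu(w)=\int_0^1 a(w,x)\,dx$ with $a(w,x)^{-1}=w-\int_0^1 W(x,y)a(w,y)\,dy$, $a$ being the unique analytic solution on $\mathbb C^+\times[0,1]$.

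Next I would split this quadratic vector equation along the two blocks. Since $W$ is supported on $[0,c]\times[c,1]$, for $v\in[c,1]$ one has $a(w,v)=\bigl(w-\int_0^c W(v,t)a(w,t)\,dt\bigr)^{-1}$, while for $u\in[0,c]$ one has $a(w,u)^{-1}=w-\int_c^1 W(u,v)a(w,v)\,dv$; eliminating the former yields the closed equation
\[
a(w,u)^{-1}=w-\int_c^1\frac{W(u,v)}{\,w-\int_0^c W(v,t)a(w,t)\,dt\,}\,dv,\qquad u\in[0,c],
\]
together with $s_\nu(w)=\int_0^c a(w,u)\,du+\int_c^1 a(w,v)\,dv$.

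Then I would transport everything back to $M_n$. By Lemma \ref{trace} and Theorem \ref{Wig} (whose hypothesis holds since $\delta_\Box(W_n,W)\to0$ forces $t(T,W_n)\to t(T,W)$ via Theorem \ref{graphonconvergence}), the limiting distribution $\mu$ of $M_n$ has $\int x^k\,d\mu=\tfrac{(1+y)^{k+1}}{2y}\beta_{2k}(\nu)$ for $k\ge1$, with $\beta_{2k}(\nu)=\int x^{2k}\,d\nu$. Inserting these into the power series \eqref{pse} (treating the $k=0$ term separately) and substituting $z=(1+y)w^2$ gives, for $|z|$ large,
\[
s_\mu\bigl((1+y)w^2\bigr)=\frac{s_\nu(w)}{2yw}+\frac{y-1}{2y(1+y)w^2}.
\]
The rational correction term is precisely what is removed by the symmetry identity $\int_0^c\!\bigl(a(w,u)-\tfrac1w\bigr)du=\int_c^1\!\bigl(a(w,v)-\tfrac1w\bigr)dv$, which I would prove by observing that on either side $a-\tfrac1w=\tfrac1w\,a\cdot(\text{its }W\text{-integral})$ and then using $W(u,v)=W(v,u)$ with Fubini; the display then collapses to $s_\mu\bigl((1+y)w^2\bigr)=\tfrac1{yw}\int_0^c a(w,u)\,du$. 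Setting $b(z,u):=\tfrac1{(1+y)w}\,a(w,u)$ with $w=\sqrt{z/(1+y)}\in\mathbb C^+$ turns this into \eqref{S87}, and dividing the closed equation above through by $(1+y)w$ and rewriting in terms of $z$ and $b$ turns it into \eqref{S88}; the series for $a$, hence $b$, converges for $|z|$ large because $\limsup_k\beta_{2k}(\nu)^{1/(2k+1)}\le 2\|W\|_{\infty}^{1/2}$. Existence, uniqueness and analyticity of the solution on all of $\mathbb C^+$, together with the identification of $s_\mu$ with it, then follow from the theory of quadratic vector equations (Theorem 2.1 of \cite{ajanki2015quadratic}) and analytic continuation, exactly as in the proof of Theorem \ref{main2}.

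I expect two steps to need genuine care. The first is the reduction to a block-structured $W$: convergence in the cut metric determines $W$ only up to measure-preserving equivalence, which could a priori scramble the block structure, so one must argue directly --- from the fact that the diagonal blocks of every $\Sigma_n$ vanish and that this constraint persists under cut-metric limits --- that the natural representative is bipartite, since it is only for that representative that \eqref{S88} is meaningful. The second is the constant-chasing in the transport step: one must pin down the substitution $z=(1+y)w^2$ and the factors $\tfrac{1+y}{y}$ and $(1+y)^{-1}$ exactly, and in particular verify that the correction term arising from the mismatch at $k=0$ between $\mu$'s zeroth moment and $\tfrac{(1+y)^{k+1}}{2y}\beta_{2k}(\nu)$ is annihilated exactly, leaving no residue, by the symmetry identity.
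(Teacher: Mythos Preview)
Your proposal is correct and follows essentially the same route as the paper: apply Theorem \ref{main2} to the dilation $H_n$, split the quadratic vector equation along the bipartite blocks of $W$ to get the closed equation for $a$ on $[0,c]$, transport moments via Lemma \ref{trace}/Theorem \ref{Wig} to obtain the relation between $s_\mu$ and $s_\nu$, and cancel the correction term $\tfrac{y-1}{2y(1+y)w^2}$ using exactly the identity you call the ``symmetry identity'' (the paper derives it by multiplying \eqref{inv1}, \eqref{inv2} by $a_1,a_2$ and integrating, which is the same computation as yours). Your definition $b(z,u)=\tfrac{1}{(1+y)w}a(w,u)$ with $w=\sqrt{z/(1+y)}$ coincides with the paper's $b(z,u)=a_1(\sqrt{z/(1+y)},u)/\sqrt{z(1+y)}$; one small slip is that to pass from the closed equation for $a$ to \eqref{S88} you \emph{multiply} both sides by $(1+y)w$, not divide. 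Your first ``care'' point (that $W$ inherits the bipartite block structure) is indeed glossed over in the paper, which simply asserts it; your second point is handled identically in both arguments.
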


\begin{rmk}
Up to notational differences, \eqref{S87}, \eqref{S88}  are the centered case($\mathbb EM_n=0$) of the equations in \cite{hachem2006empirical} (see Section 5.1 in \cite{hachem2006empirical}), where a non-centered form of the equations were also derived under the assumptions of $(4+\varepsilon)$-bounded moments and the continuity of the variance profile. Recently,
\eqref{S87}, \eqref{S88} were also studied in \cite{alt2017local,alt2017}, where the local law for the centered case was proved under stronger assumptions including bounded $k$-moments of each entry for each $k$ and irreducibility condition on the variance profile. Our Theorem \ref{Wig} and Theorem \ref{Wig2} give the weakest assumption so far for the existence of the  limiting distribution and the quadratic vector equations only for the centered case. \end{rmk}

\begin{proof}
Let $s(z)$ be the Stieltjes transform of the limiting spectral distribution of $\frac{M_n}{\sqrt n}$. Let 
\[
\gamma_k:=	\int  x^k d\mu, \quad 
m_{2k}:=\sum_{j=1}^{C_k}t(T_j^{k+1},W),	\quad 
 \text{ and }  \quad 
	m(z):=\sum_{k=0}^{\infty}\frac{m_{2k}}{z^{2k+1}}.
\]
By Theorem \ref{Wig}, for $k\geq 1$,
$$\gamma_k=\frac{(1+y)^{k+1}}{2y}m_{2k}.
$$
Note that $m_0=\gamma_0=1$, we have for $|z|$ sufficiently large,
\begin{align}
s(z)&=\sum_{k=0}^{\infty}\frac{\gamma_k}{z^{k+1}}=	\frac{1}{z}+\sum_{k=1}^{\infty}\frac{m_{2k}}{z^{k+1}}\frac{1}{2y}(1+y)^{k+1}\notag\\
&=\sum_{k=0}^{\infty}\frac{m_{2k}}{z^{k+1}}\frac{1}{2y}(1+y)^{k+1}+\frac{y-1}{2yz}=\frac{1}{2y}\sqrt{\frac{1+y}{z}}m\left(\sqrt{\frac{z}{1+y}}\enskip \right)+\frac{y-1}{2yz}.\label{810}
\end{align}
From Theorem \ref{main} and \eqref{pse}, we know $m(z)$ is the Stieltjes transform of the limiting spectral distribution of $\frac{H_n}{\sqrt{n+m}}$. Moreover, from Theorem \ref{main2}, we have
\begin{align}
m(z)&=\int_{0}^1 a(z,u) du,\label{m}\\
a(z,u)^{-1}&=z-\int_{0}^1 W(u,v)a(z,v) dv,\label{a}
\end{align}
for some analytic function $a(z,u)$ defined on $\mathbb C^+\times [0,1]$. It remains to translate the equations above to an equation for $s(z)$. Let 
\begin{align*}
a_1(z,x):&=a(z,x), \text{ for } x\in \left[0,\frac{y}{1+y}\right],\\
a_2(z,x):&=a(z,x) , \text{ for }x\in \left[\frac{y}{1+y}, 1\right].	
 \end{align*}
 Since $\frac{m}{n}\to y\in (0,\infty)$, and $W_n$ is the corresponding graphon of $\Sigma_n$, its limit $W$  will have a bipartite structure, i.e., $W(u,v)=0$ for $  (u,v)\in \left[0,\frac{y}{1+y}\right]^2 \cup  \left[\frac{y}{1+y},1\right]^2$. Then we have the following equations from \eqref{a}:
\begin{align}
a_1(z,u)^{-1}&= z-\int_{\frac{y}{1+y}}^1 W(u,v)a_2(z,v) dv,\label{inv1}	\\
a_2(z,u)^{-1}&= z-\int_{0}^{\frac{y}{1+y}}W(u,v)a_1(z,v) dv.\label{inv2}
\end{align}
Combing \eqref{inv1} and \eqref{inv2}, we have the following self-consistent equation for $a_1(z,u)$:
\begin{align}
a_1(z,u)^{-1}&=z-\int_{\frac{y}{1+y}}^1 \frac{W(u,v)}{z-\int_{0}^{\frac{y}{1+y}}W(u,t)a_1(z,t) dt} dv.	\label{S888}
\end{align}
Let $\displaystyle
b(z,u):=\frac{a_1\left(\sqrt{\frac{z}{1+y}},u\right)}{\sqrt{z(1+y)}}.$
 Then $b(z,u)$ is an analytic function defined on $  \mathbb C^+\times \left[0,\frac{y}{1+y}\right]$.
 From \eqref{S888}, we can substitute $a_1(z,u)$ with $b(z,u)$ and get
\begin{align}
b(z,u)^{-1}&=z	-\int_{\frac{y}{1+y}}^1 \frac{W(u,v)}{(1+y)^{-1}-\int_{0}^{\frac{y}{1+y}}W(u,t)b(z,t)dt} dv.\label{b}
\end{align}
By multiplying with $a_1(z,u)$, $a_2(z,u)$ on both sides   in \eqref{inv1} and  \eqref{inv2} respectively, we have
\begin{align}
	1&=z a_1(z,u)-a_1(z,u)\int_{\frac{y}{1+y}}^1 W(u,v)a_2(z,v) dv,
	\label{816}\\
	1&=z a_2(z,u)-a_2(z,u)\int_{0}^{\frac{y}{1+y}} W(u,v)a_1(z,v) dv.\label{817}
\end{align}
From \eqref{816} and \eqref{817}, by integration with respect to $u$, we have
\begin{align*}
\frac{y}{1+y} &=z\int_{0}^{\frac{y}{1+y}}a_1(z,u)du-\int_{0}^{\frac{y}{1+y}}\int_{\frac{y}{1+y}}^1 W(u,v)a_1(z,u)a_2(z,v)dudv,\\	
\frac{1}{1+y} &=z\int_{\frac{y}{1+y}}^1 a_1(z,u)du-\int_{\frac{y}{1+y}}^1 \int_{0}^{\frac{y}{1+y}} W(u,v)a_2(z,u)a_1(z,v)dudv.
\end{align*}
Therefore we have 
\begin{align}
	\int_{0}^{\frac{y}{1+y}}a_1(z,u) du-\int_{\frac{y}{1+y}}^{1}a_2(z,u) du&=\frac{y-1}{z(1+y)}\label{818}.
\end{align}
From \eqref{m} and \eqref{818},  we have the following relation between $m(z)$ and $a_1(z,u)$:
\begin{align}
 m(z)&=\int_{0}^{\frac{y}{1+y}}a_1(z,u) du+	\int_{\frac{y}{1+y}}^{1}a_2(z,u) du=2\int_{0}^{\frac{y}{1+y}}a_1(z,u) du-\frac{y-1}{z(1+y)}.\label{eq:820}
\end{align}
 With \eqref{810}  and \eqref{eq:820}, we obtain the following equation for $s(z)$:
 \begin{align*}
 s(z)&=	\frac{1+y}{y}\int_{0}^{\frac{y}{1+y}} b(z,u)du,
 \end{align*} 
 where $b(z,u)$ satisfies the equation \eqref{b}. This completes the proof.

\end{proof}

\subsection*{Acknowledgement}  The author  thanks Ioana Dumitriu for introducing the problem, and Dimitri Shlyakhtenko, Roland Speicher for enlightening discussions. The author is grateful to L{\'a}szl{\'o} Erd{\H{o}}s for helpful comments on the first draft of this paper. 
\bibliographystyle{plain}
\bibliography{globalref}

 \end{document}